\documentclass[twoside,11pt,reqno]{amsart}
\usepackage{amsmath,amssymb,amscd,mathrsfs,epic,wasysym,latexsym,tikz,mathrsfs,cite,hyperref}
\usepackage{pb-diagram}
\usepackage[matrix,arrow]{xy}

%\usetikzlibrary{topaths}
\makeatletter

\hfuzz 3pt
\vfuzz 2pt

\textheight 215mm
\textwidth 135mm

%%fakesection 
\raggedbottom

\synctex=1
\numberwithin{equation}{section}
%\swapnumbers

\newtheorem{Proposition}[equation]{Proposition}
\newtheorem{Lemma}[equation]{Lemma}
\newtheorem{Theorem}[equation]{Theorem}
\newtheorem{Corollary}[equation]{Corollary}
\theoremstyle{definition}  %% makes all of the theorem environments which follow appear in \rm

\newtheorem{Remark}[equation]{Remark}

\newtheorem{Problem}[equation]{Problem}
\newtheorem{Hypothesis}[equation]{Hypothesis}

\let\<\langle
\let\>\rangle

% Comments for coauthors...
\newcommand\Comment[2][\relax]{\space\par\medskip\noindent%
   \fbox{\begin{minipage}{\textwidth}\textbf{Comment\ifx\relax#1\else---#1\fi}\newline%
        #2\end{minipage}}\medskip
}

%dominance orders

\def\bi{\text{\boldmath$i$}}
\def\bj{\text{\boldmath$j$}}

\def\b1{\text{\boldmath$1$}}

\def\pmod#1{\text{ }(\text{\rm mod } #1)\,}

\newcommand{\Hom}{\operatorname{Hom}}
\newcommand{\Ext}{\operatorname{Ext}}
\newcommand{\EXT}{\operatorname{Ext}}
\newcommand{\ext}{\operatorname{ext}}
\newcommand{\Tor}{\operatorname{Tor}}
\newcommand{\End}{\operatorname{End}}

\newcommand{\id}{\operatorname{id}}

\newcommand{\cha}{\operatorname{char}}

\newcommand{\Z}{\mathbb{Z}}

\def\eps{{\varepsilon}}
\def\phi{{\varphi}}

\newcommand{\catC}{{\mathbf C}}

\newcommand{\Torsion}{{\tt Tors}}

\newcommand{\ga}{\gamma}

\newcommand{\la}{\lambda}
\newcommand{\La}{\Lambda}
\newcommand{\al}{\alpha}
\newcommand{\be}{\beta}

\newcommand{\de}{\delta}
\newcommand{\De}{\Delta}

%\newcommand{\T}{\mathcal{T}}
%\newcommand{\U}{\mathcal{U}}

% for trivial and sign representations of the symmetric group

%\newcommand{\Irr}{{\mathrm {IBr}}_{\ell}}

\def\id{\mathop{\mathrm {id}}\nolimits}

\newcommand{\Ind}{{\mathrm {Ind}}}

\def\rank{\mathop{\mathrm{ rank}}\nolimits}

\newcommand{\Res}{{\mathrm {Res}}}

\newcommand{\Q}{{\mathbb Q}}

\renewcommand{\mod}{\bmod \,}

\def\b{\mathfrak{b}}
\def\k{\Bbbk}
\def\x{x}

\def\height{{\operatorname{ht}}}

\def\op{{\mathrm{op}}}

\def\mod#1{#1\!\operatorname{-mod}}

\def\HOM{\operatorname{Hom}}

\def\CH{{\operatorname{ch}_q\,}}
\def\DIM{{\operatorname{dim}_q\,}}
\def\RANK{{\operatorname{rank}_q\,}}

\def\words{{ I}}%{{\mathbf W}}

\def\KP{\operatorname{KP}}

% Only the first | is treated specially.
{\catcode`\|=\active
  \gdef\set#1{\mathinner{\lbrace\,{\mathcode`\|"8000%
  \let|\midvert #1}\,\rbrace}}
}
\def\midvert{\egroup\mid\bgroup}

%% tikz macros for braids, diagrams and tableaux %%%%%%%%%%%%%%%%
\colorlet{darkgreen}{green!50!black}
\tikzset{dots/.style={very thick,loosely dotted},
         greendot/.style={fill,circle,color=darkgreen,inner sep=1.5pt,outer sep=0}
}
\def\greendot(#1,#2){\node[greendot] at(#1,#2){}}

\newenvironment{braid}{% sets defaults for the braid diagrams
  \begin{tikzpicture}[baseline=6mm,blue,line width=1pt, scale=0.4,
                      draw/.append style={rounded corners},
                      every node/.append style={font=\fontsize{5}{5}\selectfont}]%
  }{\end{tikzpicture}
}

\def\Grid(#1,#2){%  draws a coordinate grid inside a braid diagram
  \draw[very thin,gray,step=2mm] (0,0)grid(#1,#2);
  \draw[very thin,darkgreen,step=10mm] (0,0)grid(#1,#2);
}

% tikz macro for drawing tableaux with optionally shaded nodes.
% \Tableau{{1,2,3,4},{5,6,7},{8,9}}   -> 1234/567/89
% \Tableau[(1,-1),(2,-1),(2,0),(3,0),(4,0)]{{1,2,3,4},{5,6,7},{8,9}}  
%    -> 1234/567/89 with the (1,2)-Garnir belt shaded
\newcommand\Tableau[2][\relax]{
  \begin{tikzpicture}[scale=0.5,draw/.append style={thick,black}]
    \ifx\relax#1\relax%
    \else % shade the boxes in #1
      \foreach\box in {#1} { \filldraw[blue!30]\box+(-.5,-.5)rectangle++(.5,.5); }
    \fi
    \newcount\row\newcount\col
    \row=0
    \foreach \Row in {#2} {
       \col=1
       \foreach\k in \Row {
          \draw(\the\col,\the\row)+(-.5,-.5)rectangle++(.5,.5);
          \draw(\the\col,\the\row)node{\k};
          \global\advance\col by 1
       }
       \global\advance\row by -1
    }
  \end{tikzpicture}
}

%% tikz macro for drawing tableaux with optionally shaded nodes.
%% \YoungDiagram{4,3,2}   -> Young diagram for (4,3,2)
%% \YoungDiagram[(1,-1),(2,-1),(2,0),(3,0),(4,0)]{4,3,2}  
%%    -> Young diagram for (4,3,2) with the (1,2)-Garnir belt shaded
\newcommand\YoungDiagram[2][\relax]{
  \begin{tikzpicture}[scale=0.5,draw/.append style={thick,black}]
    \ifx\relax#1\relax%
    \else % shade the boxes in #1
    \foreach\box in {#1} {
      \filldraw[blue!30]\box rectangle ++(1,1);
    }
    \fi
    \newcount\row
    \row=0
    \foreach \col in {#2} {
       \draw(1,\the\row)grid ++(\col,1);
       \global\advance\row by -1
    }
  \end{tikzpicture}
}

\begin{document}
\title[Homomorphisms between standard modules]{{\bf Homomorphisms between standard modules over finite type KLR algebras}}

\author{\sc Alexander S. Kleshchev}
\address{Department of Mathematics\\ University of Oregon\\
Eugene\\ OR~97403, USA}
\email{klesh@uoregon.edu}

\author{\sc David J. Steinberg}
\address{Department of Mathematics\\ University of Oregon\\
Eugene\\ OR~97403, USA}
\email{dsteinbe@uoregon.edu}

\subjclass[2010]{16G99, 16E05, 17B37}

\thanks{Research supported by the NSF grant DMS-1161094.}

\begin{abstract}
Khovanov-Lauda-Rouquier algebras of finite Lie type come with  families of {\em standard modules}, which under the Khovanov-Lauda-Rouquier categorification correspond to PBW-bases of the positive part of the corresponding quantized enveloping algebra. We show that there are no non-zero homomorphisms between distinct standard modules and all non-zero endomorphisms of a standard module are injective. We obtain  applications to the extensions between standard modules and modular representation theory of KLR algebras.
\end{abstract}

\maketitle
%\section{}
%\subsection{}

\section{Introduction}

 Khovanov-Lauda-Rouquier algebras of finite Lie type possess {\em affine quasi-hereditary structures} \cite{BKM, Kato, KLM, KlL, KX, Kdonkin}. In particular, they come with important families of modules which are called {\em standard}. Under the Khovanov-Lauda-Rouquier categorification \cite{KL1,Ro}, standard modules correspond to PBW-monomials in the positive part of the corresponding quantized enveloping algebra, see \cite{BKM, Kato}. 

Affine quasihereditary structures are parametrized by {\em convex orders} on the sets of positive roots of the corresponding root systems. In this paper we work with an arbitrary convex order and an arbitrary finite Lie type. When working with the KLR algebra $H_\al$ for any $\al\in Q^+$, the standard modules $\De(\la)$ are labeled by $\la\in\KP(\al)$, where $\KP(\al)$ is the set of Kostant partitions of $\al$. 
With these conventions, our main result is as follows:

\vspace{2mm}
\noindent
{\bf Theorem A.}
{\em
Let $\alpha\in Q^+$ and  $\lambda,\mu\in\KP(\al)$.  If $\lambda \neq \mu$, then 
$$
\Hom_{H_\alpha}(\Delta(\lambda), \Delta(\mu)) = 0.
$$
}

%\vspace{2mm}
When $\la\not\leq\mu$, it is clear that $\Hom_{H_\alpha}(\Delta(\lambda), \Delta(\mu)) = 0$, but for $\la<\mu$, we found this fact surprising. Theorem A is proved in Section~\ref{SThA}.

The case $\la=\mu$ is also well-understood. In fact, the endomorphism algebras of the standard modules are naturally isomorphic to certain algebras of symmetric functions, see Theorem~\ref{TInjGeneral}. Now, Theorem A can be complemented by the following (folklore) observation  and compared to the main result of \cite{BCGM}:

\vspace{2mm}
\noindent
{\bf Theorem B.}
{\em
Let $\alpha\in Q^+$ and  $\lambda\in\KP(\al)$. Then every non-zero $H_\al$-endomorphism of $\De(\la)$ is injective. 
}

\vspace{2mm}
For reader's convenience, we prove Theorem B in Section~\ref{SSEnd}. 

Theorem A turns out to have some applications to modular representation theory of KLR algebras, which are pursued in Section~\ref{SRedModP}. Note that KLR algebras are defined over an arbitrary ground ring $k$, and when we wish to emphasize this fact, we use the notation $H_{\al,k}$. Using the $p$-modular system $(F,R,K)$ with $F=\Z/p\Z$, $R=\Z_p$ and $K=\Q_p$, we can reduce modulo $p$ any irreducible $H_{\al,K}$-module. An important problem is to determine when these reductions remain irreducible, see \cite{KRbz,Williamson}. This problem can be reduced to homological questions involving standard modules.

In Section~\ref{SRedModP}, we show that standard modules have universal $R$-forms $\De(\la)_R$ such that $\De(\la)_R\otimes_Rk\cong \De(\la)_k$ for any field $k$. Then an application of the Universal Coefficient Theorem and Theorem A yields: 

\vspace{2mm}
\noindent
{\bf Theorem C.}
{\em
Let $\al\in Q^+$ and $\la,\mu\in\KP(\al)$. Then the $R$-module 
$$\Ext^1_{H_{\al,R}}(\De(\la)_R,\De(\mu)_R)$$ is torsion-free.  Moreover, $$\DIM \Ext^1_{H_{\al,F}}(\De(\la)_F,\De(\mu)_F)=\DIM\Ext^1_{H_{\al,K}}(\De(\la)_K,\De(\mu)_K)$$ if and only if $\Ext^2_{H_{\al,R}}(\De(\la)_R,\De(\mu)_R)$ is torsion-free. 
}
\vspace{2mm}

As a final application, using a universal extension procedure, we construct $R$-forms $Q(\la)_R$ of the projective indecomposable modules $P(\la)_K$, and prove: 

\vspace{2mm}
\noindent
{\bf Theorem D.}
{\em
Let $\al\in Q^+$. Then reductions modulo $p$ of all irreducible $H_{\al,K}$-modules are irreducible if and only if one of the following equivalent conditions holds:
\begin{enumerate}
\item[{\rm (i)}] $Q(\la)_R\otimes_RF$ is a projective $H_{\al,F}$-module for all $\la\in\KP(\al)$;
\item[{\rm (ii)}] $\Ext^1_{H_{\al,F}}(Q(\la)_R\otimes_RF,\De(\mu)_F)=0$ for all $\la,\mu\in\KP(\al)$;
\item[{\rm (iii)}] $\Ext^2_{H_{\al,R}}(Q(\la)_R,\De(\mu)_R)$ is torsion-free for all $\la,\mu\in\KP(\al)$. 
\end{enumerate}

}
\vspace{2mm}

%%%%%%%%%%%%%%

\section{Preliminaries}\label{SPrel}

\subsection{KLR algebras}
We follow closely the set up of \cite{BKM}. In particular, $R$ is an irreducible root system with simple roots $\{\alpha_i \mid i \in I\}$ and the corresponding set of positive roots $R^+$. Denote by $Q$ the root lattice and by $Q^+\subset Q$ the set of $\mathbb{Z}_{\geq 0}$-linear combinations of simple roots, and write $\height(\alpha) = \sum_{i \in I} c_i$ for $\alpha = \sum_{i \in I} c_i \alpha_i\in Q^+$. The standard symmetric bilinear form $Q\times Q\to\Z,\ (\al,\be)\mapsto \al\cdot\be$ is normalized so that $d_i:=(\al_i\cdot\al_i)/2$ is equal to $1$ for the short simple roots $\al_i$. We also set $d_\be:=(\be\cdot\be)/2$ for all $\be\in R^+$. 
The Cartan matrix is $C = (c_{i,j})_{i,j \in I}$ with $c_{i,j} := (\alpha_i \cdot \alpha_j) / d_i$.

Fix a commutative unital ring $k$ and an element $\alpha \in Q^+$  of height $n$. The symmetric group $S_n$ with simple transpositions $s_r:=(r\:r\!+\!1)$ acts on the set $\words^\alpha = \{ \bi = i_1 \cdots i_n \in I^n \mid  \sum_{j=1}^n \alpha_{i_j} = \alpha \}$ by place permutations. Choose signs $\epsilon_{i,j}$ for all $i,j \in I$ with $c_{ij}<0$ so that $\epsilon_{i,j}\epsilon_{j,i}=-1$. With this data, Khovanov-Lauda \cite{KL1,KL2} and Rouquier \cite{Ro} define the graded $k$-algebra $H_\al$ with unit $1_\al$, called the {\em KLR algebra}, given by generators 
$$\{1_\bi \mid  \bi \in \words^\alpha \} \cup \{ x_1, \dots, x_n\} \cup \{\tau_1, \dots, \tau_{n-1}\}$$
subject only to the following relations
%to certain explicit relations, see \cite[\S2.2]{BKM} for the full list of those. 
\begin{itemize}
\item$\x_r \x_s = \x_s \x_r$;
\item $1_\bi1_\bj=\de_{\bi,\bj}1_\bi$ and $\sum_{\bi\in\words^\al}1_\bi=1_\al$; 
\item
$\x_r 1_\bi = 1_\bi\x_r$ and 
$\tau_r 1_\bi = 1_{s_r\cdot \bi}
\tau_r$;
\item 
$(x_t \tau_r-\tau_r x_{s_r(t)})1_\bi  
= \de_{i_r,i_{r+1}}(\de_{t,r+1}-\de_{t,r})1_\bi;$
\item
$
\tau_r^2 1_\bi = 
\left\{
\begin{array}{ll}
0&\text{if $i_r=i_{r+1}$,}\\
\eps_{i_r,i_{r+1}}\big({x_r}^{-c_{i_r,i_{r+1}}}-{x_{r+1}}^{-c_{i_{r+1},i_r}}\big)1_\bi&\text{if
  $c_{i_r,i_{r+1}}< 0$,}\\
1_\bi&\text{otherwise;}
\end{array}\right.
$
\item $\tau_r \tau_s = \tau_s \tau_r$ if $|r-s|>1$;
\item 
$(\tau_{r+1} \tau_{r} \tau_{r+1} -
  \tau_{r}\tau_{r+1}\tau_{r}) 1_\bi
=$\\
\phantom{\hspace{15mm}}$\left\{
\begin{array}{ll}
\displaystyle\sum_{r+s=-1-c_{i_r,i_{r+1}}}
\!\!\!\!\!\!\!
\eps_{i_r,i_{r+1}} x_r^r x_{r+2}^s 1_\bi&\text{if
  $c_{i_r,i_{r+1}} < 0$ and $i_r =
  i_{r+2}$,}\\
\hspace{9mm}0&\text{otherwise.}
\end{array}\right.
$
\end{itemize}
The KLR algebra is graded with $\deg 1_{\bi}=0$, $\deg (x_r1_{\bi})= 2d_{i_r}$ and $\deg (\tau_{r}1_{\bi})=-\alpha_{i_r} \cdot \alpha_{i_{r+1}}$.  

For each element $w \in S_n$, fix a reduced expression $w=s_{r_1}\cdots s_{r_l}$ which determines an element $\tau_w = \tau_{r_1} \cdots \tau_{r_l}\in H_\al$.

\begin{Theorem} \label{TBasis} %{\rm \cite{}}%{\bf ()}
 {\bf (Basis Theorem)} \cite[Theorem 2.5]{KL1} 
The sets 
\begin{equation}\label{EBasis}
\{\tau_w x_1^{a_1}\cdots x_n^{a_n} 1_{\bi}\}\quad\text{and}\quad
\{x_1^{a_1}\cdots x_n^{a_n} \tau_w  1_{\bi}\},
\end{equation}
with $w$ running over $S_n$, $a_r$ running over $\Z_{\geq 0}$, and $\bi$ running over $\words^\alpha$, 
 are $k$-bases for $H_\alpha.$ 
\end{Theorem}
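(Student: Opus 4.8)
The plan is to establish the two halves of the statement --- that the indicated sets span $H_\al$ over $k$ and that they are $k$-linearly independent --- by quite different arguments. For spanning it suffices to treat the first set $\{\tau_w x_1^{a_1}\cdots x_n^{a_n}1_\bi\}$, since the anti-automorphism of $H_\al$ fixing all the generators $1_\bi$, $x_r$, $\tau_r$ carries it to the second. I would show by induction that every monomial in the generators lies in the $k$-span of that set, the inductive parameter being the number of $\tau$-factors, tracked alongside the residue sequence $\bi$ and the $x$-degrees. The reductions used are: the straightening relation, which rewrites $x_t\tau_r1_\bi$ as $\tau_r x_{s_r(t)}1_\bi$ plus a term with one fewer $\tau$, hence pushes every $x$ to the right of every $\tau$; the relations $\tau_r^21_\bi\in k[x_1,\dots,x_n]1_\bi$ and $\tau_r\tau_s=\tau_s\tau_r$ for $|r-s|>1$, which shorten any $\tau$-product indexed by a non-reduced word, again at the cost of terms with fewer $\tau$'s; and the cubic braid relation together with its length-two consequences, which rewrites a $\tau$-product indexed by a reduced expression for $w$ as $\tau_w$ (for the chosen reduced word of $w$) plus terms with fewer $\tau$-factors. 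Since each reduction strictly lowers the number of $\tau$-factors on the error terms, the induction terminates and the first set spans $H_\al$.

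For linear independence I would construct a faithful polynomial representation and close with a triangularity argument. Set $P_\al:=\bigoplus_{\bi\in\words^\al}k[y_1,\dots,y_n]\,v_\bi$, a free module over the polynomial ring with basis $\{v_\bi\}$, and let $H_\al$ act by $1_\bj v_\bi=\de_{\bi,\bj}v_\bi$; by $x_r$ acting as multiplication by $y_r$ on the $\bi$-th summand; and by $\tau_r$ acting on the $\bi$-th summand as the Demazure operator $(1-s_r)(y_r-y_{r+1})^{-1}$ into $k[y]v_\bi$ when $i_r=i_{r+1}$, as $v_\bi\mapsto(s_r\cdot)v_{s_r\bi}$ when $i_r\ne i_{r+1}$ and $c_{i_r,i_{r+1}}=0$, and as the appropriate polynomial multiple of $v_{s_r\bi}$ governed by $\eps_{i_r,i_{r+1}}(y_r^{-c_{i_r,i_{r+1}}}-y_{r+1}^{-c_{i_{r+1},i_r}})$ when $c_{i_r,i_{r+1}}<0$. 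One checks that these operators satisfy every defining relation of $H_\al$; granting this, $P_\al$ becomes an $H_\al$-module, and to prove $\{\tau_w x_1^{a_1}\cdots x_n^{a_n}1_\bi\}$ independent in $H_\al$ it is enough to prove the corresponding operators on $P_\al$ independent. For this one fixes $\bi$, applies a hypothetical dependence relation to $v_\bi$, and extracts leading terms: $x_1^{a_1}\cdots x_n^{a_n}1_\bi$ acts as multiplication by $y_1^{a_1}\cdots y_n^{a_n}$ on the $\bi$-component, while $\{\tau_w\}_{w\in S_n}$ acts on $k[y]v_\bi$ in a way unitriangular with respect to the Bruhat order (the top-degree part of $\tau_w$ being a signed composite of divided differences and permutation operators, whose linear independence is classical), so the coefficients vanish one $w$ at a time and the $x$-powers are then separated by $y$-degree.

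The heart of the argument --- indeed the only step that is not essentially bookkeeping --- is checking that the polynomial operators satisfy the cubic braid relation in the case $i_r=i_{r+2}\ne i_{r+1}$ with $c_{i_r,i_{r+1}}<0$: one expands $\tau_{r+1}\tau_r\tau_{r+1}1_\bi$ and $\tau_r\tau_{r+1}\tau_r1_\bi$ as operators in the three variables $y_r,y_{r+1},y_{r+2}$, with divided differences on the equal-colour strands and the $Q$-polynomial $\eps_{i_r,i_{r+1}}(y_r^{-c_{i_r,i_{r+1}}}-y_{r+1}^{-c_{i_{r+1},i_r}})$ on the mixed ones, and verifies that the difference is exactly the prescribed sum $\sum\eps_{i_r,i_{r+1}}x_r^{a}x_{r+2}^{b}1_\bi$. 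This is precisely where the normalization of the $\tau_r$-action on mixed-colour strands must be pinned down, and it is where Khovanov-Lauda expend real effort in \cite{KL1}; by contrast the remaining relations --- commutativity of the $x$'s, the $x$-$\tau$ straightening relation, the formula for $\tau_r^2$, and the far commutation of the $\tau$'s --- follow by direct and comparatively painless computation. Once faithfulness is secured the triangularity step is routine, so essentially all the difficulty is concentrated in building and checking the representation $P_\al$.
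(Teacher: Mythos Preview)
The paper does not actually prove this statement: Theorem~\ref{TBasis} is stated with an attribution to \cite[Theorem~2.5]{KL1} and no proof is given, so there is nothing in the paper to compare your argument against. Your outline is essentially the original Khovanov--Lauda proof --- spanning by a straightening/word-reduction induction, and linear independence via the faithful polynomial representation and a Bruhat-triangularity argument --- and is correct in its broad strokes.

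One small caution on the details of the representation: your description of the $\tau_r$-action in the case $c_{i_r,i_{r+1}}<0$ as ``the appropriate polynomial multiple of $v_{s_r\bi}$'' is not quite right; in the Khovanov--Lauda module the operator is $f\,v_\bi\mapsto (s_r\cdot f)\,v_{s_r\bi}$ (just the place permutation on the polynomial part, with no extra polynomial factor), and the $Q$-polynomial only appears when you compute $\tau_r^2$. Likewise, the independence step needs a little more than ``unitriangular in Bruhat order'': one fixes $\bi$, groups the $w$'s by their coset modulo the stabilizer of $\bi$, and uses that $\tau_w\cdot v_\bi$ has leading term in the $w\cdot\bi$-component, then within a coset invokes the classical independence of divided-difference operators. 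These are the points where \cite{KL1} does the genuine work, as you rightly flagged.
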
 

It follows that $H_\al$ is Noetherian if $k$ is Noetherian. It also follows that for any $1\leq r\leq n$, the subalgebra $k[x_r]\subseteq H_\al$, generated by $x_r$, is isomorphic to the polynomial algebra $k[x]$---this fact will be often used without further comment.  Moreover, for each $\bi \in I^\al$, the subalgebra $\mathcal{P}(\bi) \subseteq 1_\bi R_\al 1_\bi$ generated by $\{x_r1_\bi \mid 1 \leq r \leq n\}$ is isomorphic to a polynomial algebra in $n$ variables.  By defining $\mathcal{P}:= \bigoplus_{\bi \in I^\al} \mathcal{P}(\bi)$, we obtain a linear action of $S_{n}$ on $\mathcal{P}$ given by 
$$w \cdot x_1^{a_1} \cdots x_n^{a_n}1_\bi = x_{w(1)}^{a_1} \cdots x_{w(n)}^{a_n} 1_{w \cdot \bi},$$
for any $w \in S_n$, $\bi \in I^\al$ and $a_1, \dots, a_n \in \Z_{\geq 0}$.  Setting $\Lambda(\al):= \mathcal{P}^{S_n},$ we have:

\begin{Theorem} \label{TCenter}
\cite[Theorem 2.9]{KL1}
$\Lambda(\al)$ is the center of $H_\al$.
\end{Theorem}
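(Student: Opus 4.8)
\emph{Proof plan.}
The plan is to prove the two inclusions $\Lambda(\al)\subseteq Z(H_\al)$ and $Z(H_\al)\subseteq\Lambda(\al)$ directly from the Basis Theorem~\ref{TBasis}. The one computation I would isolate at the outset is the rule for moving a polynomial past a $\tau$: from the defining relations, checked on the generators $x_t$ and extended to all of $\mathcal P(\bi)$ by induction on degree, one has for every $f\in\mathcal P(\bi)$
$$
\tau_r\,f\,1_\bi=(s_r\!\cdot\! f)\,\tau_r1_\bi+\de_{i_r,i_{r+1}}\,\partial_r(f)\,1_\bi,\qquad
\partial_r(f):=\frac{s_r\!\cdot\! f-f}{x_r-x_{r+1}}\,,
$$
and, iterating the companion rule for the $x_r$, that for every $w\in S_n$ the element $\tau_w x_r1_\bi$ equals $x_{w(r)}\tau_w1_\bi$ plus a $\mathcal P(\bi)$-linear combination of terms $\tau_v1_\bi$ with $\ell(v)<\ell(w)$.

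For $\Lambda(\al)\subseteq Z(H_\al)$: write $z=\sum_\bi f_\bi1_\bi\in\mathcal P^{S_n}$, which by the definition of the $S_n$-action amounts to $s_r\!\cdot\! f_\bi=f_{s_r\bi}$ for all $r$ and $\bi$; in particular $s_r\!\cdot\! f_\bi=f_\bi$, so $\partial_r(f_\bi)=0$, whenever $i_r=i_{r+1}$. Then $z$ clearly commutes with every $x_t$ and every $1_\bj$, and the displayed rule gives
$$
\tau_r z=\sum_\bi(s_r\!\cdot\! f_\bi)\,\tau_r1_\bi=\sum_\bi f_{s_r\bi}\,\tau_r1_\bi=\sum_\bi f_\bi\,1_\bi\tau_r=z\tau_r .
$$
Hence $z$ is central.

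For the reverse inclusion, let $z\in Z(H_\al)$. \emph{Step 1: $z\in\mathcal P$.} Since $z$ is central, $z1_\bi=1_\bi z1_\bi\in1_\bi H_\al1_\bi$, and by the Basis Theorem $1_\bi H_\al1_\bi=\bigoplus_{w\in S_\bi}\mathcal P(\bi)\,\tau_w1_\bi$, where $S_\bi\le S_n$ is the stabilizer of $\bi$; write $z1_\bi=\sum_{w\in S_\bi}f_w\,\tau_w1_\bi$. If some $f_w\ne0$ with $w\ne e$, pick one, $w_0$, of maximal length $L>0$. From $[z1_\bi,x_r]=0$ and the straightening rule, the coefficient of the basis element $\tau_{w_0}1_\bi$ in this commutator is $f_{w_0}\,(x_{w_0(r)}-x_r)$ — the strictly shorter corrections cannot reach $\tau_{w_0}1_\bi$ — and so it vanishes in $\mathcal P(\bi)\cong k[x_1,\dots,x_n]$. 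Since $x_{w_0(r)}-x_r$ is monic of degree one in one of the variables, it is a non-zero-divisor, so $w_0(r)=r$; as $r$ is arbitrary, $w_0=e$, a contradiction. Hence each $z1_\bi\in\mathcal P(\bi)$, i.e.\ $z=\sum_\bi f_\bi1_\bi\in\mathcal P$. \emph{Step 2: $z\in\mathcal P^{S_n}$.} Feed $z=\sum_\bi f_\bi 1_\bi$ into $[z,\tau_r]=0$ using the same straightening rule and compare coefficients in the basis: the coefficients of $\tau_r1_\bi$ give $f_{s_r\bi}=s_r\!\cdot\! f_\bi$ for all $r$ and $\bi$ (the coefficients of $1_\bi$ give $\partial_r(f_\bi)=0$ when $i_r=i_{r+1}$, which is already implied). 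Iterating over all $r$ yields $w\!\cdot\! f_\bi=f_{w\bi}$ for all $w\in S_n$, which is exactly $S_n$-invariance of $z$. Thus $z\in\mathcal P^{S_n}=\Lambda(\al)$.

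The step I expect to be the real obstacle is supplying the length-triangularity used in the straightening rule, i.e.\ the fact that rewriting an arbitrary product of the $\tau$'s (and polynomials) in the standard basis $\{\tau_v1_\bi\}$ introduces only corrections of strictly smaller Coxeter length. This is exactly where the $\tau_r^2$-relations, the braid relations with their polynomial error terms, and the $x$-past-$\tau$ relations must be handled carefully; each such correction either deletes a $\tau$-factor or shortens a reduced word, so the claim holds, but making it precise is the technical heart (it is implicit in the proof of the Basis Theorem). Once it is in place the rest is bookkeeping, and it is perhaps mildly surprising that no hypothesis on $k$ beyond commutativity is needed, since the relevant differences $x_a-x_b$ are non-zero-divisors over any commutative ring.
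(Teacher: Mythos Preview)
The paper does not supply its own proof of this statement; it simply quotes it from \cite[Theorem~2.9]{KL1}. Your argument is correct and is essentially the standard proof (the one in the cited reference): check both inclusions directly from the Basis Theorem and the defining relations, using the divided-difference rule $\tau_r f\,1_\bi=(s_r\!\cdot\! f)\tau_r1_\bi+\de_{i_r,i_{r+1}}\partial_r(f)1_\bi$ to commute $\tau_r$ past polynomials, and length-triangularity of the $\tau_w$ to isolate the leading coefficient when showing $Z(H_\al)\subseteq\mathcal P$. The one technical point you flag---that straightening products of $\tau$'s and polynomials in the basis only introduces corrections of strictly smaller Coxeter length---is exactly right and is established in the course of the proof of the Basis Theorem in \cite{KL1}; once granted, the rest is bookkeeping as you say. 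Your observation that only commutativity of $k$ is needed (since $x_a-x_b$ is monic, hence a non-zero-divisor in $k[x_1,\dots,x_n]$) is also correct.
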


If $H$ is a Noetherian graded $k$-algebra, we denote by $\mod{H}$ the category of finitely generated graded left $H$-modules. The morphisms in this category are all homogeneous degree zero  $H$-homomorphisms, which we denote $\hom_{H}(-,-)$. 
For  $V\in\mod{H}$, let $q^d V$ denote its grading shift by $d$,  so if $V_m$ is the degree $m$ component of $V$, then $(q^dV)_m= V_{m-d}.$ More generally, for a Laurent polynomial $a=a(q)=\sum_{d}a_dq^d\in\Z[q,q^{-1}]$ with non-negative coefficients, we set $a V:=\bigoplus_d(q^d V)^{\oplus a_d}$.

For $U,V\in \mod{H}$,
we set 
$\HOM_H(U, V):=\bigoplus_{d \in \Z} \HOM_H(U, V)_d$, 
where
$$
\HOM_H(U, V)_d := \hom_H(q^d U, V) = \hom_H(U, q^{-d}V).
$$
We define $\operatorname{ext}^m_H(U,V)$ and
$\EXT^m_H(U,V)$ similarly. 
Since $U$ is finitely generated, $\HOM_H(U,V)$ can be identified in the obvious way with the set of all $H$-module homomorphisms  ignoring the gradings. A similar result holds for $\EXT^m_H(U,V)$, since $U$ has a resolution by finitely generated projective modules. We use $\cong$ to denote an isomorphism in $\mod{H}$  and $\simeq$ %to denote 
an isomorphism up to a degree shift.

We always work in the category $\mod{H_\al}$, in particular all $H_\al$-modules are assumed to be finitely generated graded. 
Also, until Section~\ref{SRed}, we assume that $k$ is a field. Let $q$ be a variable, and $\Z((q))$ be the ring of Laurent series. The quantum integers $[n]=(q^n-q^{-n})/(q-q^{-1})$ and expressions like $1/(1-q^2)$ are always interpreted as elements of $\Z((q))$. Note that the {\em graded dimension} $\DIM 1_\bi H_\al$ is always an element of $\Z((q))$. So for any $V\in\mod{H_\al}$, its {\em formal character} $\CH V:=\sum_{\bi\in \words^\al}(\DIM 1_\bi V)\cdot\bi$ is an element of $\bigoplus_{\bi\in\words^\al}\Z((q))\cdot \bi$. Note also that $\CH(q^dV)=q^d\CH(V)$, where the first $q^d$ means the degree shift as introduced in the previous paragraph. We refer to $1_\bi V$ as the {\em $\bi$-weight space} of $V$ and to its vectors as {\em vectors of weight} $\bi$.

There is an anti-automorphism $\iota:H_\alpha \rightarrow H_\alpha$
which fixes all the generators. Given $V\in\mod{H_\al}$, we denote $V^\circledast := \HOM_{k}(V, k)$ viewed as a left $H_\al$-module 
via $\iota$. Note that in general $V^\circledast$ is not finitely generated as an $H_\al$-module, but we will apply $\circledast$ only to finite dimensional modules. In that case, we have $\CH V^{\circledast}=\overline{\CH V}$, where the bar means the {\em bar-involution}, i.e. the automorphism of $\Z[q,q^{-1}]$ that swaps $q$ and $q^{-1}$ extended to $\bigoplus_{\bi\in \words^\al}\Z[q,q^{-1}]\cdot \bi$. 

Let $\be_1,\dots,\be_m\in Q^+$ and $\al=\be_1+\dots+\be_m$. 
Consider the set of concatenations  $$I^{\be_1,\dots,\be_m}:=\{\bi^1\cdots\bi^m \mid \bi^1\in\words^{\be_1},\dots,\bi^m\in\words^{\be_m}\}\subseteq \words^\al.$$ 
There is a natural (non-unital) algebra embedding $H_{\be_1}\otimes\dots\otimes H_{\be_m}\to H_\al$, which sends the unit $1_{\be_1}\otimes\dots \otimes 1_{\be_m}$ to the idempotent 
\begin{equation}\label{EId}
1_{\be_1,\dots,\be_m}:=\sum_{\bi\in \words^{\be_1,\dots,\be_m}}1_\bi \in H_\al.
\end{equation}
We have an exact induction functor 
$$\Ind_{\be_1,\dots,\be_m}^{\al}=H_\al1_{\be_1,\dots,\be_m}\otimes_{H_{\be_1}\otimes\dots\otimes H_{\be_m}}-:\mod{(H_{\be_1}\otimes\dots\otimes H_{\be_m})}\to\mod{H_{\al}}.
$$ 
For $V_1\in\mod{H_{\be_1}}, \dots, V_m\in\mod{H_{\be_m}}$, we denote 
$$V_1\circ\dots\circ V_m:=\Ind_{\be_1,\dots,\be_m}^{\al} V_1\boxtimes \dots\boxtimes V_m.$$

\subsection{Standard modules}
The KLR algebras $H_\al$ are known to be \emph{affine quasihereditary} in the sense of \cite{Kdonkin}, see \cite{Kato, BKM, KlL}.  Central to this theory is the notion of {\em standard modules}, whose definition depends on a choice of a certain partial order. We first fix a \emph{convex order} on $R^+$, i.e. a total order such that whenever $\gamma$, $\beta$, and $\gamma + \beta$ all belong to $R^+$, $\gamma \leq \beta$ implies $\gamma \leq \gamma + \beta \leq \beta$. A \emph{Kostant} partition of $\alpha \in Q^+$ is a tuple $\la =(\la_1, \dots, \la_r)$ of positive roots with $\la_1 \geq \la_2 \geq \cdots \geq \la_r$ such that  $\la_1 + \cdots +\la_r = \alpha$.  Let $\KP(\alpha)$ denote the set of all Kostant partitions of $\alpha$ and for $\la$ as above define $\la_m'=\la_{r-m+1}$.  Now, we have a {\em bilexicographical partial order} on $\KP(\alpha)$, also denoted by $\leq$, i.e. if $\la =(\la_1, \dots, \la_r),\mu=(\mu_1, \dots, \mu_s)\in\KP(\al)$ then $\la < \mu$ if and only if the following two conditions are satisfied:
\begin{itemize}
\item $\la_1=\mu_1, \dots, \la_{l-1}=\mu_{l-1}$ and $\la_l < \mu_l$ for some $l$;
\item $\la'_1 =\mu'_1, \dots, \la_{m-1}' = \mu_{m-1}'$ and $\la_m' > \mu_m'$ for some $m$.
\end{itemize}

\iffalse{
We define the Grothendieck group $[\mod{H_\alpha}]$ to be the $\mathbb{Z}((q))$-module on generators $\{[V] \mid V \in \mod{H_\al}\}$ subject to relations 
$$[V] = \sum_{\lambda \in \KP(\al)} \sum_{d \in \mathbb{Z}} \dim \Hom_{H_\al}(q^d P(\lambda), V) q^d [L(\lambda)].$$
Note that the right hand side is well defined for $V=H_\al$, and therefore for all $V \in \mod{H_\al}$.  Let $\mod{H^{fd}_\al}$ be the category of finite dimensional graded $H_\al$-modules.  Then its usual Grothendieck group $[\mod{H^{fd}_\al}]$ is a $\mathbb{Z}[q,q^{-1}]$-module and there is an isomorphism of $\mathbb{Z}((q))$ modules
$$\mathbb{Z}((q)) \otimes _{\mathbb{Z}[q,q^{-1}]}[\mod{H_\al^{fd}}] \simeq [\mod{H_\al}]$$
sending $1 \otimes [V]$ to $[V]$ for each $V \in \mod{H^{fd}_\al}$.
}\fi

%Assume that $k$ is a field. 
To every $\la \in \KP(\alpha)$, McNamara \cite{McN} (cf. \cite[Theorem 7.2]{KRbz}) associates an absolutely irreducible finite dimensional $\circledast$-self-dual $H_\alpha$-module $L(\lambda)$ so that $\{L(\la)\mid\la\in\KP(\al)\}$ is a complete irredundant set of irreducible $H_\al$-modules, up to isomorphism and degree shift. Since $L(\la)$ is $\circledast$-self-dual, its formal character is bar-invariant. 
The key special case is where $\la=(\al)$ for $\al\in R^+$, in which case $L(\la)=L(\al)$ is called a {\em cuspidal irreducible module}. 
For $m\in\Z_{> 0}$, we write $(\al^m)$ for the Kostant partition $(\al,\dots,\al)\in\KP(m\al)$, where $\al$ appears $m$ times. 
The cuspidal modules have the following nice property:

\begin{Lemma} \label{LCuspPower}
%{\rm \cite{}}%{\bf ()}
\cite[Lemma 3.4]{McN} (cf. \cite[Lemma 6.6]{KRbz}). For any $\al\in R^+$ and $m\in\Z_{> 0}$, we have $L(\al^m)\simeq L(\al)^{\circ m}$. 
\end{Lemma}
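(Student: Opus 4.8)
The plan is to prove, up to the degree shift implicit in $\simeq$, the two assertions that $L(\al)^{\circ m}$ is irreducible and that it equals $L(\al^m)$. The driving computation is with formal characters. Since $\CH$ is multiplicative for $\circ$ in the sense of the quantum shuffle product, $\CH(L(\al)^{\circ m})$ is the $m$-fold shuffle power of $\CH L(\al)$; by the Shuffle Lemma the highest word occurring in a shuffle product is the concatenation of the highest words of the factors, so the highest word of $\CH(L(\al)^{\circ m})$ is $\bi_\al\cdots\bi_\al$ ($m$ copies of the dominant word $\bi_\al$ of $L(\al)$), and it occurs with coefficient $1$, the trivial concatenation being the only shuffle producing it. By the definition of the Kostant partition $(\al^m)$, this word is exactly the dominant word $\bi_{(\al^m)}$ of $L(\al^m)$. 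Now in $[L(\al)^{\circ m}]=\sum_{\mu\in\KP(m\al)}[L(\al)^{\circ m}:L(\mu)]_q\,[L(\mu)]$, every composition factor $L(\mu)$ has dominant word $\bi_\mu\leq\bi_{(\al^m)}$, while $\bi_{(\al^m)}$ occurs in $\CH L(\mu)$ only when $\bi_{(\al^m)}\leq\bi_\mu$; comparing the $\bi_{(\al^m)}$-coefficients on both sides (and using that dominant words separate the $\mu$'s, each $\bi_\mu$ occurring in $\CH L(\mu)$ with coefficient $1$ in degree $0$) yields $[L(\al)^{\circ m}:L(\al^m)]_q=1$.

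Next I would pin down the head and socle. Recall (following McNamara's construction of the $L(\la)$, or alternatively: $L(\al)^{\circ m}$ is a quotient of $\Delta(\al)^{\circ m}$ via $\Delta(\al)\onto L(\al)$ and the exactness of $\Ind$ guaranteed by the Basis Theorem, while the head of $\Delta(\al)^{\circ m}$ is a direct sum of grading shifts of $L(\al^m)$ by the affine quasihereditary structure) that $\head L(\al)^{\circ m}$ is a sum of copies of $L(\al^m)$; combined with the multiplicity $1$ just obtained, this shows $\head L(\al)^{\circ m}\simeq L(\al^m)$ is simple, so $L(\al)^{\circ m}$ is indecomposable. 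For the socle I use $\circledast$-self-duality: $(L(\al)^{\circ m})^\circledast\simeq\Ind^{m\al}_{\al,\dots,\al}(L(\al)^{\circledast})^{\boxtimes m}\simeq L(\al)^{\circ m}$, since $L(\al)$ is $\circledast$-self-dual and reversing the $m$ identical induction factors changes nothing. Hence $\soc L(\al)^{\circ m}\simeq(\head L(\al)^{\circ m})^\circledast\simeq L(\al^m)$ as well.

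It remains to invoke the elementary fact that a finitely generated module $M$ with simple head is indecomposable, so $\soc M\subseteq\rad M$, whence if $M$ also has $\soc M\simeq\head M\simeq L$ then either $M\simeq L$ or $[M:L]\geq 2$. Applied to $M=L(\al)^{\circ m}$ with $L=L(\al^m)$ and $[L(\al)^{\circ m}:L(\al^m)]_q=1$, this gives $L(\al)^{\circ m}\simeq L(\al^m)$; the argument is uniform in $m$, with no separate induction required. The main obstacle is the character/dominant-word computation of the first paragraph: it depends on the combinatorics of dominant words for KLR modules and on the Shuffle Lemma (standard but nontrivial inputs), together with the behaviour of the dominant word under $\circ$. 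A secondary point is to confirm that the divided-power normalization relating $\Delta(\al)^{\circ m}$ to the genuine standard module $\Delta(\al^m)$ only contributes grading shifts and multiplicities of $L(\al^m)$ in the head. An alternative route, closer to McNamara's, replaces the first paragraph by an induction on $m$ together with a Mackey-filtration analysis of the restriction of $L(\al)\circ L(\al^{m-1})$ to $H_\al\otimes H_{(m-1)\al}$, where convexity of the order and cuspidality of $L(\al)$ force all but one restriction layer to vanish in degrees $\leq0$.
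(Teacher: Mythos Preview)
The paper does not give its own proof of this lemma; it is cited from \cite[Lemma 3.4]{McN} and \cite[Lemma 6.6]{KRbz}.

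Your primary argument contains a genuine gap. The claim that the highest word $\bi_\al^m$ occurs in $\CH(L(\al)^{\circ m})$ ``with coefficient $1$, the trivial concatenation being the only shuffle producing it'' is false: any of the $m!$ block permutations of the $m$ copies of $\bi_\al$ is a valid shuffle producing $\bi_\al^m$, each carrying its own $q$-weight, so the coefficient is a $q$-analogue of $m!$, not $1$. The simplest instance already exhibits this: for a simple root $\al=\al_i$ one has $\dim_k L(\al_i)^{\circ m}=m!$ while the only word appearing in its character is $i^m$. For the same reason your companion claim that each $\bi_\mu$ occurs in $\CH L(\mu)$ with coefficient $1$ fails for $\mu=(\al^m)$: the unique irreducible over $H_{m\al_i}\cong NH_m$ is $m!$-dimensional, not one-dimensional. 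Hence comparing top-word coefficients cannot yield $[L(\al)^{\circ m}:L(\al^m)]_q=1$ without independently knowing the multiplicity of $\bi_\al^m$ in $\CH L(\al^m)$---which is essentially the statement you are proving. A secondary concern is that the dominant-word formalism you invoke is tied to Lyndon-type total orders on words, whereas the paper (and \cite{McN}) works with an \emph{arbitrary} convex order on $R^+$.

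The alternative you sketch at the end is the right fix and is indeed McNamara's route: a Mackey analysis of $\Res^{m\al}_{\al,(m-1)\al} L(\al)^{\circ m}$, together with convexity (which makes $(\al^m)$ minimal in $\KP(m\al)$) and cuspidality of $L(\al)$, collapses the filtration to a single layer and gives irreducibility by induction on $m$. Your head/socle/self-duality discussion is sound and would combine cleanly with that approach once the multiplicity issue is handled correctly.
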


If $\la=(\la_1,\dots,\la_r)\in\KP(\al)$ the {\em reduced standard module} is defined to be 
\begin{equation}\label{EBarDe}
\bar\De(\la):=q^{s(\la)} L(\la_1)\circ\dots \circ L(\la_m)
\end{equation}
for a specific degree shift $s(\la)$, whose description will not be important. 
By \cite[Theorem 3.1]{McN} (cf. \cite[7.2, 7.4]{KRbz}), the $H_\al$-module $\bar\De(\la)$ has simple head $L(\la)$, and  in the Grothendieck group of finite dimensional graded $H_\al$-modules, we have 
\begin{equation}\label{EDecN}
[\bar\De(\la)]=[L(\la)]+\sum_{\mu<\la}d_{\la,\mu}[L(\mu)]
\end{equation}
for some coefficients $d_{\la,\mu}\in\Z[q,q^{-1}]$, called the {\em decomposition numbers}. The decomposition numbers depend on the characteristic of the ground field $k$.

 Let $P(\la)$ denote a projective cover of $L(\la)$  in $\mod{H_\al}$. 
For $V\in\mod{H_\al}$ we define the (graded) composition multiplicity $[V:L(\la)]_q:=\DIM\Hom(P(\la),V)\in\Z((q))$. 
The {\em standard module} $\De(\la)$ is defined as the largest quotient of $P(\la)$ all of whose composition factors are of the form $L(\mu)$ with $\mu\leq \la$, see \cite[Corollary 4.13]{Kato}, \cite[Corollary 3.16]{BKM}, \cite[(4.2)]{Kdonkin}. We note that while the irreducible modules $L(\la)$ are all finite dimensional, the standard modules $\De(\la)$ are always infinite dimensional. The standard modules have the usual nice properties:

%On the other hand, the projective module $P(\la)$ always has a {\em finite} filtration $P(\la)=P_0\supset P_1\supset\dots\supset P_N=0$ with $P_0/P_1\cong \De(\la)$ and $P_r/P_{r+1}\simeq \De(\mu^{(r)})$ for $\mu^{(r)}>\la$, $r=1,2,\dots$, \cite[Corollary 3.14]{BKM}. 

\begin{Theorem} \label{TStand} {\rm \cite[\S3]{BKM}}
Let $\al\in Q^+$ and $\la,\mu\in\KP(\al)$. 
\begin{enumerate}
\item[{\rm (i)}] $\De(\la)$ has a simple head $L(\la)$, and $[\De(\la):L(\mu)]_q\neq 0$ implies $\mu\leq \la$. 
\item[{\rm (ii)}] We have $\Hom_{H_\al}(\De(\la),\De(\mu))=0$ unless $\la\leq\mu$. 
\item[{\rm (iii)}] For $m\geq 1$, we have $\Ext^m_{H_\al}(\De(\la),\De(\mu))=0$ unless $\la<\mu$. 
\item[{\rm (iv)}] The module $P(\la)$ has a {\em finite} filtration $P(\la)=P_0\supset P_1\supset\dots\supset P_N=0$ with $P_0/P_1\cong \De(\la)$ and $P_r/P_{r+1}\simeq \De(\mu^{(r)})$ for $\mu^{(r)}>\la$, $r=1,2,\dots$. 

\item[{\rm (v)}] Denoting the graded multiplicities of the factors in a $\De$-filtration of $P(\la)$ by $(P(\la):\De(\mu))_q$, we have $(P(\la):\De(\mu))_q=d_{\mu,\la}(q)$. 
\end{enumerate}
\end{Theorem}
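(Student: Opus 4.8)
\emph{Proof proposal.} Parts (i) and (ii) are essentially formal. Since $\bar\De(\la)$ has simple head $L(\la)$ it is a quotient of $P(\la)$, and by \eqref{EDecN} all of its composition factors are $L(\mu)$ with $\mu\leq\la$; hence the largest quotient $\De(\la)$ of $P(\la)$ with this property (well defined because in the graded, locally finite setting the relevant class of quotients is closed under intersection) is nonzero and maps onto $\bar\De(\la)$. Thus $\De(\la)$ has simple head $L(\la)$, and $[\De(\la):L(\mu)]_q\neq 0$ forces $\mu\leq\la$ by construction, which is (i). For (ii), a nonzero $f\colon\De(\la)\to\De(\mu)$ has image a nonzero quotient of $\De(\la)$, hence with head $L(\la)$; as $\im f\subseteq\De(\mu)$ this gives $[\De(\mu):L(\la)]_q\neq 0$, so $\la\leq\mu$ by (i).

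The content of (iii)--(v) I would derive from a \emph{standard--costandard orthogonality}. First replace the abstract description of $\De(\la)$ by its concrete model: for $\be\in R^+$ let $\De(\be)$ be the cuspidal standard module, the largest quotient of $P(\be)$ with all composition factors $\cong L(\be)$; one checks $\De(\be)$ is free of finite rank over a polynomial subalgebra $k[z_\be]\subseteq\END_{H_\be}(\De(\be))$ with $\De(\be)/z_\be\De(\be)\cong\bar\De(\be)\simeq L(\be)$, and that for $\la=(\la_1,\dots,\la_r)$ the module $\De(\la_1)\circ\dots\circ\De(\la_r)$ (up to a grading shift) is isomorphic to $\De(\la)$ as defined in the text (the affine quasi-hereditary structure of \cite{Kato,BKM,KlL}). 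Dually set $\bar\nade(\la):=\bar\De(\la)^\circledast$, with simple socle $L(\la)$ and composition factors $L(\mu)$, $\mu\leq\la$. The key claim is that for all $\la,\mu\in\KP(\al)$ and $m\geq 0$,
\begin{equation*}
\EXT^m_{H_\al}(\De(\la),\bar\nade(\mu))=\begin{cases}k,& m=0\text{ and }\la=\mu,\\ 0,& \text{otherwise,}\end{cases}
\end{equation*}
and this is the step I expect to be the main obstacle. The plan for it is an induction on $\height(\al)$: when $\al=\al_i$ everything is trivial, since $H_{\al_i}\cong k[x]$, $\De(\al_i)\cong k[x]$, $\bar\nade(\al_i)\cong k$; in general one applies Frobenius reciprocity for the exact functor $\Ind^\al_{\be_1,\dots,\be_t}$ (whose right adjoint is restriction to $H_{\be_1}\otimes\dots\otimes H_{\be_t}$), the $\Coind\simeq\Ind$ duality, the Mackey/shuffle filtration for restrictions of induced modules, and the Künneth formula over tensor products of KLR algebras, to reduce $\EXT^m_{H_\al}(\De(\la),\bar\nade(\mu))$ to a product of $\EXT$-groups between cuspidal pieces. \emph{Convexity} of the order enters decisively here: the restriction of a cuspidal $L(\be)$ to a proper parabolic is supported only on compositions whose parts are sums of positive roots lying entirely on one side of $\be$, which forces the cross terms between the factors of $\De(\la)$ and of $\bar\nade(\mu)$ to vanish and leaves a nonzero contribution only when $\la=\mu$, of the asserted form.

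Granting the orthogonality, the rest is the familiar highest-weight-category argument, adapted to the graded affine setting as in \cite{BKM}. Using the exact sequences $0\to L(\mu)\to\bar\nade(\mu)\to C(\mu)\to 0$, with $C(\mu)$ supported on $L(\nu)$, $\nu<\mu$, an induction along the poset upgrades the orthogonality to: $\EXT^m_{H_\al}(\De(\la),L(\mu))\neq 0$ implies ($m=0$ and $\mu=\la$) or ($m\geq 1$ and $\mu>\la$). For (iv): $H_\al$ is Noetherian, so $K:=\ker(P(\la)\twoheadrightarrow\De(\la))$ is finitely generated, and the long exact sequence gives $\HOM_{H_\al}(K,L(\mu))\cong\EXT^1_{H_\al}(\De(\la),L(\mu))$, which is nonzero only when $\mu>\la$; the standard criterion — a finitely generated $M$ admits a finite $\De$-filtration iff $\EXT^1_{H_\al}(M,\bar\nade(\mu))=0$ for all $\mu\in\KP(\al)$ — then applies to the projective $P(\la)$, the filtration being finite because $\KP(\al)$ is finite and the multiplicities lie in $\Z[q,q^{-1}]$, with top section $\De(\la)$ and the other sections $\simeq\De(\mu^{(r)})$, $\mu^{(r)}>\la$. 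For (v): apply $\HOM_{H_\al}(-,\bar\nade(\mu))$ to a $\De$-filtration of $P(\la)$; the orthogonality makes this functor exact on the filtration and yields $(P(\la):\De(\mu))_q=\DIM\HOM_{H_\al}(P(\la),\bar\nade(\mu))$, which equals $[\bar\nade(\mu):L(\la)]_q$ since $P(\la)$ is a projective cover of $L(\la)$, and hence $d_{\mu,\la}(q)$ with the grading conventions of \cite{BKM}. Finally (iii): from $0\to K\to P(\la)\to\De(\la)\to 0$ one gets, for $m\geq 1$, that $\EXT^m_{H_\al}(\De(\la),\De(\mu))$ is a subquotient of $\EXT^{m-1}_{H_\al}(K,\De(\mu))$; since $K$ has a finite $\De$-filtration with sections $\De(\si)$, $\si>\la$, an induction on $m$ with base case (ii) forces the existence of $\si$ with $\la<\si<\mu$, so $\la<\mu$.
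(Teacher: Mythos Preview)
The paper does not prove Theorem~\ref{TStand}; it is quoted with attribution to \cite[\S3]{BKM} and no argument is given in this paper. So there is no ``paper's own proof'' against which to compare your proposal.

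For what it is worth, your outline is essentially the strategy of \cite{BKM}: parts (i) and (ii) are indeed formal consequences of the definition of $\De(\la)$ as the maximal quotient of $P(\la)$ with composition factors $L(\mu)$, $\mu\leq\la$, and the substance of (iii)--(v) is the $\De$--$\bar\nabla$ orthogonality $\Ext^m_{H_\al}(\De(\la),\bar\nabla(\mu))\cong\delta_{m,0}\,\delta_{\la,\mu}\,k$, which \cite{BKM} proves by precisely the mix of adjunction, Mackey/shuffle filtration, K\"unneth, and convexity that you describe. Your deductions of (iv), (v), (iii) from the orthogonality are the standard highest-weight arguments, again as in \cite{BKM}. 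One small correction: in the induction for (iii), the base case $m=1$ only yields a section $\sigma$ with $\la<\sigma\leq\mu$ (via (ii)), not $\la<\sigma<\mu$; the desired conclusion $\la<\mu$ is unaffected.
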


To construct the standard modules more explicitly, let us first assume that $\alpha\in R^+$ and explain how to construct the {\em cuspidal standard module} $\De(\al)$. Put $q_\alpha = q^{\alpha \cdot \alpha/2}$.  By \cite[Lemma 3.2]{BKM}, there exists unique (up to isomorphism) indecomposable $H_\alpha$-modules, $\Delta_m(\alpha)$, with $\Delta_0(\alpha)=0$, such that there are short exact sequences
\begin{align*}   
&0 \to q_\alpha^{2(m-1)}L(\alpha) \to \Delta_m(\alpha) \to \Delta_{m-1}(\alpha)\to0, \label{EDeltam} \\
&0 \to q_\alpha^2\Delta_{m-1}(\alpha) \to \Delta_m(\alpha) \to L(\alpha) \to 0. \nonumber
\end{align*}
This yields an inverse system $\cdots \to \Delta_2(\alpha) \to \Delta_1(\alpha) \to \Delta_0(\alpha)$, and we have $\Delta(\alpha) \cong \varprojlim \Delta_m(\alpha)$, see \cite[Corollary 3.16]{BKM}.

Let $m\in \Z_{>0}$. An explicit endomorphism $e_m \in \text{End}_{H_{m\alpha}}(\Delta(\alpha)^{\circ m})^{\text{op}}$ 
is defined in  \cite[Section 3.2]{BKM}, and then 
\begin{equation}\label{EDeAlM}
\Delta(\alpha^m) \cong q_\al^{m(m-1)/2}\Delta(\alpha)^{\circ m}e_m.
\end{equation}
Finally, for an arbitrary $\alpha \in Q^+$ and $\lambda \in \KP(\alpha)$, gather together the equal parts of $\lambda$ to write $\lambda=(\lambda_1^{m_1}, \dots, \lambda_s^{m_s}),$ with $\lambda_1 > \cdots > \lambda_s$.  Then by \cite[(3.5)]{BKM}, 
\begin{equation}\label{EDelta}
\Delta(\lambda) \cong \Delta(\lambda_1^{m_1}) \circ \cdots \circ \Delta(\lambda_s^{m_s}).
\end{equation}
%Additional properties of standard modules can be found in \cite{BKM}.

Thus, cuspidal standard modules are building blocks for arbitrary standard modules. We will need some of their additional properties. 
Let $\al\in R^+$. 
If $\lambda \in \KP(\al)$ is minimal such that $\lambda > (\al)$, then by \cite[Lemma 2.6]{BKM}, $\lambda = (\beta, \gamma)$ for positive roots $\beta >\alpha >\gamma$.  In this case, $(\be,\ga)$ is called a \emph{minimal pair} for $\alpha$ and we write $\operatorname{mp}(\al)$ for the set of all such.  The following result proved in \cite[\S\S3.1,4.3]{BKM} describes some of the important properties of $\Delta(\alpha)$. 

\begin{Theorem}\label{TDelta}  Let $\alpha \in R^+$.  Then:\begin{enumerate}
\item  $[\Delta(\alpha):L(\alpha)]_q=1/(1-q_\alpha^2)$ and $[\Delta(\alpha):L(\la)]_q=0$ for $\la\neq (\al)$. 
%
%\item$\Delta(\alpha)$ has irreducible head isomorphic to $L(\alpha)$;
%

\item Let $\catC_\al$ be the category of all modules in $\mod{H_\al}$ all of whose composition factors are $\simeq L(\al)$. 
Any $V \in\catC_\al$ is a finite direct sum of copies of the indecomposable modules $\simeq \Delta_m(\alpha)$ and $\simeq \Delta(\alpha)$. 
Moreover, 
$\Delta(\alpha)$ is a projective cover of $L(\al)$ in $\catC_\al$. In particular, $\Ext^d_{H_\alpha}(\Delta(\alpha),V)=0$ for $d \geq 1$ and  $V\in \catC_\al$.

\item $\End_{H_\alpha}(\Delta(\alpha)) \cong k[x]$ for $x$ in degree $2d_\alpha$.

\item 
There is a short exact sequence 
$0 \to q_\alpha^2 \Delta(\alpha) \to \Delta(\alpha) \to L(\alpha) \to 0.$

\item For $(\beta, \gamma) \in \operatorname{mp}(\alpha)$ there is a short exact sequence
$$0 \to q^{-\beta \cdot \gamma} \Delta(\beta) \circ \Delta(\gamma) \xrightarrow{\phi} \Delta(\gamma) \circ \Delta(\beta) \to [p_{\beta, \gamma} + 1] \Delta(\alpha) \to 0,$$
where $p_{\beta, \gamma}$ is the largest integer $p$ such that $\beta-p\gamma$ is a root.
% and $[m]$ denotes the quantum integer.
\end{enumerate}
\end{Theorem}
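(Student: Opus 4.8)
The plan is to prove the five parts in the order (iii), (iv), (i), (ii), (v): the first four are read off from the inverse-limit presentation $\Delta(\alpha)\cong\varprojlim\Delta_m(\alpha)$ together with the two families of short exact sequences of \cite[Lemma~3.2]{BKM} defining the $\Delta_m(\alpha)$, while (v) rests on these plus the general properties of standard modules in Theorem~\ref{TStand}. Part (v) is where the real work lies.

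\emph{Parts (iii), (iv), (i).} Induction on $m$ along the first family of sequences shows that $\Delta_m(\alpha)$ is uniserial with $m$ composition factors, all of the form $q_\alpha^{2j}L(\alpha)$, and that $\End_{H_\alpha}(\Delta_m(\alpha))\cong k[x]/(x^m)$ with $x$ in degree $2d_\alpha$, realised as the endomorphism sliding the uniserial filtration down one layer; here one uses that $L(\alpha)$ is absolutely irreducible and $\circledast$-self-dual, together with the uniqueness clause of \cite[Lemma~3.2]{BKM}. Taking the inverse limit in the graded category --- where each graded component stabilises, so that the limit is the polynomial ring rather than the power series ring --- gives $\End_{H_\alpha}(\Delta(\alpha))=\varprojlim k[x]/(x^m)\cong k[x]$, which is (iii). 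Since the transition maps are surjective and $L(\alpha)$ is finite dimensional, $\varprojlim$ is exact on the second family of sequences and produces $0\to q_\alpha^2\Delta(\alpha)\to\Delta(\alpha)\to L(\alpha)\to0$; injectivity of the left map holds because the socle $q_\alpha^{2(m-1)}L(\alpha)$ of $\Delta_m(\alpha)$ is killed by the transition map to $\Delta_{m-1}(\alpha)$, so no nonzero compatible family of socle vectors survives in the limit. This is (iv), and in particular $q_\alpha^2\Delta(\alpha)=\rad\Delta(\alpha)$. Applying the exact functor $\Hom_{H_\alpha}(P(\mu),-)$ to the sequence in (iv) gives $[\Delta(\alpha):L(\mu)]_q=q_\alpha^2[\Delta(\alpha):L(\mu)]_q+\delta_{\mu,(\alpha)}$ in $\Z((q))$, hence $[\Delta(\alpha):L(\alpha)]_q=1/(1-q_\alpha^2)$ and $[\Delta(\alpha):L(\mu)]_q=0$ for $\mu\ne(\alpha)$, which is (i).

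\emph{Part (ii).} The category $\catC_\alpha$ is a Serre subcategory of $\mod{H_\alpha}$, so $\Ext^1_{\catC_\alpha}=\Ext^1_{H_\alpha}$ on it. Applying $\Hom_{H_\alpha}(\Delta(\alpha),-)$ to the sequence in (iv) and invoking Theorem~\ref{TStand}(iii) with $\lambda=\mu=(\alpha)$, which gives $\Ext^{\ge1}_{H_\alpha}(\Delta(\alpha),\Delta(\alpha))=0$, yields $\Ext^{\ge1}_{H_\alpha}(\Delta(\alpha),L(\alpha))=0$, and then, inducting up the first family of sequences, $\Ext^{\ge1}_{H_\alpha}(\Delta(\alpha),\Delta_m(\alpha))=0$. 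Thus $\Delta(\alpha)$ is projective in $\catC_\alpha$; having head $L(\alpha)$, it is a projective cover of $L(\alpha)$ there, and every $V\in\catC_\alpha$ is a quotient of a finite sum of grading shifts of $\Delta(\alpha)$ (lift a homogeneous basis of $V/\rad V$). A standard Morita-type argument then makes $\Hom_{H_\alpha}(\Delta(\alpha),-)$ an equivalence $\catC_\alpha\xrightarrow{\sim}\gmod{k[x]}$ sending $\Delta(\alpha)\mapsto k[x]$, $\Delta_m(\alpha)\mapsto k[x]/(x^m)$ and $L(\alpha)\mapsto k$; since the homogeneous ideals of $k[x]$ (with $x$ in positive degree) are exactly the $(x^m)$, the indecomposable objects of $\gmod{k[x]}$ are exactly $q^jk[x]$ and $q^jk[x]/(x^m)$, which transports back to the asserted classification, and the $\Ext$-vanishing for general $V\in\catC_\alpha$ then reduces to the cases already settled.

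\emph{Part (v).} Fix $(\beta,\gamma)\in\operatorname{mp}(\alpha)$, so $\beta>\alpha>\gamma$, $\beta+\gamma=\alpha$, $(\beta,\gamma)$ is minimal in $\KP(\alpha)$ among partitions $>(\alpha)$, and $\Delta(\beta,\gamma)=\Delta(\beta)\circ\Delta(\gamma)$ by \eqref{EDelta}. First, for a minimal pair the modules $L(\beta)\circ L(\gamma)$ and $L(\gamma)\circ L(\beta)$ have composition factors only among the grading shifts of $L(\alpha)$ and $L(\beta,\gamma)$; this follows from \eqref{EDecN} applied to $\bar\Delta(\beta,\gamma)$ together with $\circledast$-duality to interchange the two orders, and is the basic structural fact about minimal pairs. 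Consequently, by multiplicativity of formal characters under $\circ$ and part (i), the character of $\Delta(\gamma)\circ\Delta(\beta)$ minus that of $q^{-\beta\cdot\gamma}\Delta(\beta,\gamma)$ equals $[p_{\beta,\gamma}+1]$ times the character of $\Delta(\alpha)$, with the quantum integer $[p_{\beta,\gamma}+1]$ forced by matching total characters (equivalently, by the relevant structure constant of the quantised enveloping algebra). Next, produce the intertwiner $\phi\colon q^{-\beta\cdot\gamma}\Delta(\beta)\circ\Delta(\gamma)\to\Delta(\gamma)\circ\Delta(\beta)$ from the $\tau$-generators, namely the map built from $\tau_w$ for the permutation $w$ carrying the $\beta$-block past the $\gamma$-block; its internal degree is $\sum_{r\in\beta,\,s\in\gamma}(-\alpha_{i_r}\cdot\alpha_{i_s})=-\beta\cdot\gamma$, whence the shift $q^{-\beta\cdot\gamma}$, and it is nonzero because the leading term of the Mackey filtration of $\Res(\Delta(\gamma)\circ\Delta(\beta))$ supplies a nonzero map under Frobenius reciprocity. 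I expect the main obstacle to be showing $\phi$ is \emph{injective}: I would do this by the usual composition-of-intertwiners argument, in which the composite of $\phi$ with the intertwiner in the reverse direction equals multiplication by an explicit nonzero element of the polynomial subalgebra that acts without zero divisors on $\Delta(\beta)\circ\Delta(\gamma)$ (the latter being torsion-free over that subalgebra). Granting injectivity, $\operatorname{coker}\phi$ has the character computed above, so all its composition factors are $\simeq L(\alpha)$, i.e. $\operatorname{coker}\phi\in\catC_\alpha$; by part (ii) it is a sum of grading shifts of $\Delta_m(\alpha)$'s and $\Delta(\alpha)$'s, and the character of $\Delta(\alpha)$ (a genuine power series, by (i)) against those of the $\Delta_m(\alpha)$ (polynomials) forces $\operatorname{coker}\phi\cong[p_{\beta,\gamma}+1]\Delta(\alpha)$, which completes the short exact sequence of (v). The only genuinely delicate points are the injectivity of $\phi$ and the exact degree bookkeeping (the shift $q^{-\beta\cdot\gamma}$ and the quantum integer $[p_{\beta,\gamma}+1]$), both of which require the explicit KLR relations rather than abstract quasi-hereditary reasoning.
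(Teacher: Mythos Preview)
The paper does not prove Theorem~\ref{TDelta}; it simply records it as a result proved in \cite[\S\S3.1,4.3]{BKM}. So there is no in-paper proof to compare against, and the relevant comparison is with the argument in \cite{BKM}.

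Your treatment of (iii), (iv), (i) is essentially that of \cite[\S3.1]{BKM}: one reads everything off from the inverse system $(\Delta_m(\alpha))_{m\ge0}$ and the two families of short exact sequences. This is fine.

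For (ii), however, you invoke Theorem~\ref{TStand}(iii) to get $\Ext^{\ge1}_{H_\alpha}(\Delta(\alpha),\Delta(\alpha))=0$. In \cite{BKM} the logical order is the reverse: the cuspidal case (your Theorem~\ref{TDelta}) is established first and is the input to the general $\Ext$-vanishing of Theorem~\ref{TStand}(iii). So your argument is circular relative to the source. What \cite{BKM} actually does is compute $\ext^1_{H_\alpha}(L(\alpha),L(\alpha))$ directly (by induction on height, using the minimal-pair short exact sequence at the level of \emph{simples}, which is available before any standard modules are built), showing it is one-dimensional and concentrated in degree~$2d_\alpha$; from this the uniserial structure of the $\Delta_m(\alpha)$, the identification $\End_{H_\alpha}(\Delta(\alpha))\cong k[x]$, and the projectivity of $\Delta(\alpha)$ in~$\catC_\alpha$ all follow. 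An alternative non-circular route to projectivity in $\catC_\alpha$ is to note that $\Delta(\alpha)$ is by definition the largest quotient of $P(\alpha)$ lying in $\catC_\alpha$, so $\Hom_{H_\alpha}(\Delta(\alpha),-)=\Hom_{H_\alpha}(P(\alpha),-)$ on $\catC_\alpha$ and is therefore exact there. Either way, you should avoid Theorem~\ref{TStand}(iii) at this stage. (Note also that your Serre-subcategory remark only identifies $\Ext^1_{\catC_\alpha}$ with $\Ext^1_{H_\alpha}$; the higher $\Ext^d_{H_\alpha}$-vanishing needs a separate argument.)

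For (v), you have correctly isolated the two genuine difficulties---injectivity of the intertwiner~$\phi$ and the precise identification of the cokernel as $[p_{\beta,\gamma}+1]\Delta(\alpha)$---but you do not actually carry them out. In \cite[\S4.3]{BKM} both are handled by explicit computation: the character identity comes from Lusztig's PBW commutation formulas in the quantum group (this is where $[p_{\beta,\gamma}+1]$ enters), and injectivity is checked concretely rather than by an abstract ``composite of intertwiners'' argument. Your outline is accurate as a roadmap, but as written it is a sketch rather than a proof.
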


\begin{Corollary} \label{LGrDimDe} %{\rm \cite{}}%{\bf ()}
Let $\al\in R^+$. The dimensions of the graded components $\De(\al)_d$ are $0$ for  $d\ll0$ and are bounded above by some $N>0$ independent of $d$. 
\end{Corollary}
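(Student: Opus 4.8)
The plan is to compute the graded dimension of $\De(\al)$ explicitly in terms of that of the finite-dimensional cuspidal module $L(\al)$, and then simply read off both assertions. The key input is Theorem~\ref{TDelta}: part~(1) says the only composition factors of $\De(\al)$ are grading shifts of $L(\al)$, with total graded multiplicity $1/(1-q_\al^2)$, and part~(4) provides the short exact sequence $0 \to q_\al^2\De(\al) \to \De(\al) \to L(\al) \to 0$. Since $\De(\al)$ is finitely generated, its formal character lies in $\bigoplus_{\bi}\Z((q))\cdot\bi$; in particular every graded component $\De(\al)_d$ is finite-dimensional and $\De(\al)_d = 0$ for $d \ll 0$. (Alternatively this follows from $\De(\al) \cong \varprojlim\Delta_m(\al)$: for each fixed $d$ the tower $\{\Delta_m(\al)_d\}_m$ stabilizes, because the kernels $q_\al^{2(m-1)}L(\al)$ of the defining surjections $\Delta_m(\al)\onto\Delta_{m-1}(\al)$ are concentrated in degrees tending to $+\infty$.) We may therefore speak of $\DIM\De(\al)\in\Z((q))$.

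Next I would extract a recursion. Writing $q_\al^2 = q^{2d_\al}$ with $d_\al = (\al\cdot\al)/2 \geq 1$, the exact sequence of Theorem~\ref{TDelta}(4), read in graded degree $d$, becomes $0 \to \De(\al)_{d-2d_\al} \to \De(\al)_d \to L(\al)_d \to 0$, whence $\dim_k\De(\al)_d = \dim_k\De(\al)_{d-2d_\al} + \dim_k L(\al)_d$ for every $d\in\Z$. As $\De(\al)$ is bounded below, iterating this identity downward terminates after finitely many steps and gives $\dim_k\De(\al)_d = \sum_{j\geq 0}\dim_k L(\al)_{d - 2d_\al j}$, a finite sum. (This is of course just the coefficientwise reading of $\DIM\De(\al) = \DIM L(\al)/(1-q_\al^2)$, which one could instead obtain directly from Theorem~\ref{TDelta}(1) via additivity of formal characters.)

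Finally I would read off the two claims. Let $-D$ be the lowest degree in which $L(\al)$ is non-zero, so by $\circledast$-self-duality $L(\al)$ is supported in $[-D,D]$. If $d < -D$, then every index $d - 2d_\al j$ with $j \geq 0$ is $< -D$, so the sum above is empty and $\De(\al)_d = 0$; this yields vanishing for $d\ll 0$. For the upper bound, note that since $d_\al > 0$ the integers $d - 2d_\al j$ ($j \geq 0$) are pairwise distinct, so $\dim_k\De(\al)_d = \sum_{j\geq 0}\dim_k L(\al)_{d-2d_\al j} \leq \sum_{e\in\Z}\dim_k L(\al)_e = \dim_k L(\al)$; thus $N := \dim_k L(\al)$ works.

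I do not expect a genuine obstacle here: the whole argument is a bookkeeping consequence of the structural results for $\De(\al)$ already recorded in Theorem~\ref{TDelta}. The only point deserving a word of care is the finiteness and boundedness below of the graded components of the infinite-dimensional module $\De(\al)$, which is handled either by the general remark on formal characters of finitely generated $H_\al$-modules or by the inverse-limit description, as indicated above.
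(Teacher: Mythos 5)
Your argument is correct and is essentially the paper's proof, just unpacked: the paper cites Theorem~\ref{TDelta}(i) to get $\DIM\De(\al)=\DIM L(\al)/(1-q_\al^2)$ and concludes immediately from finite-dimensionality of $L(\al)$, and you spell out the coefficientwise version of exactly that identity (and in fact note the equivalence yourself). The extra detail on boundedness-below and the explicit bound $N=\dim_k L(\al)$ are fine but not a different route.
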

\begin{proof}
By Theorem~\ref{TDelta}(i), we have 
$\dim_q(\Delta(\alpha)) = \frac{1}{1-q_\alpha^2} \dim_q(L(\alpha)),
$
which implies the result since $L(\al)$ is finite dimensional. 
\end{proof}

\subsection{Endomorphisms of standard modules}\label{SSEnd}

We shall denote by $x_\al$ the degree $2d_\al$ endomorphism of $\De(\al)$ which corresponds to $x$ under the isomorphism $\End_{H_\alpha}(\Delta(\alpha))\cong k[x]$ in Theorem~\ref{TDelta}(iii). This determines $x_\al$ uniquely up to a scalar.  %Our first lemma states that these are the only finitely generated graded submodules of $\Delta(\alpha)$.

\begin{Lemma}\label{submodule lemma}
Let $\alpha \in R^+$. Then every non-zero $H_\al$-endomorphism of $\De(\al)$ is injective, and 
every submodule of $\Delta(\alpha)$ is equal to $x_\al^s(\Delta(\alpha))\cong q_\alpha^{2s} \Delta(\alpha)$ for some $s \in \mathbb{Z}_{\geq 0}$.
\end{Lemma}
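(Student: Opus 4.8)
The plan is to exploit the structure theory packaged in Theorem~\ref{TDelta}, especially parts (ii), (iii), and (iv). First I would prove the injectivity statement. Let $f\in\End_{H_\al}(\De(\al))$ be non-zero. By Theorem~\ref{TDelta}(iii), $\End_{H_\al}(\De(\al))\cong k[x]$, so $f=c\cdot x_\al^s + (\text{higher order in }x_\al)$ for some $s\ge 0$ and $c\in k^\times$; multiplying by a scalar we may write $f = x_\al^s\cdot u$ where $u$ is a unit in the local ring $k[x_\al]$ (a power series argument: $u$ has non-zero constant term, so it is invertible in $\End$ since $\End$ is, up to completion, local — more simply, $u = c + x_\al(\dots)$ and the non-leading terms don't matter for injectivity, since $\ker f = \ker(x_\al^s u)$ and one can argue directly). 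So it suffices to show $x_\al$ itself is injective. Now by Theorem~\ref{TDelta}(iv) there is a short exact sequence $0\to q_\al^2\De(\al)\xrightarrow{g}\De(\al)\to L(\al)\to 0$; the injective map $g$ is a degree-$2d_\al$ endomorphism of $\De(\al)$, hence equals $c' x_\al$ for some $c'\in k^\times$ (again using Theorem~\ref{TDelta}(iii) — the space of degree-$2d_\al$ endomorphisms is one-dimensional, spanned by $x_\al$). Therefore $x_\al$ is injective, and consequently so is every non-zero power $x_\al^s$, and so is $f = x_\al^s u$.

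Next, the description of submodules. Let $0\ne V\subseteq\De(\al)$. Every composition factor of $V$ is a composition factor of $\De(\al)$, hence $\simeq L(\al)$ by Theorem~\ref{TDelta}(i), so $V\in\catC_\al$. But $V$ is a submodule of $\De(\al)$, which by Corollary~\ref{LGrDimDe} has finite-dimensional graded components, bounded uniformly; hence $V$ also has uniformly bounded finite-dimensional graded components. By Theorem~\ref{TDelta}(ii), $V$ is a finite direct sum of modules $\simeq\De_m(\al)$ and $\simeq\De(\al)$. A direct sum decomposition $V = V_1\oplus\cdots\oplus V_t$ with each $V_j$ indecomposable: I claim $t=1$. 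Indeed, the socle of $\De(\al)$ is simple — this follows because $\De(\al)^\circledast$ has simple head, or more directly: any two non-zero submodules of $\De(\al)$ intersect non-trivially. To see the latter, note a non-zero submodule of $\De(\al)$ has non-zero image in... hmm, here is the cleaner route: since $x_\al$ is injective, for any $0\ne v\in\De(\al)$ the submodule $H_\al v$ contains $x_\al^s(H_\al v)$ which are nested, and one shows $\De(\al)/x_\al(\De(\al))\cong L(\al)$ is simple, so $x_\al(\De(\al))$ is a maximal submodule; iterating, the submodules $x_\al^s(\De(\al))$ form a descending chain of the unique maximal submodules. The key point is that $\De(\al)$ has a \emph{unique} maximal submodule (simple head, Theorem~\ref{TStand}(i) or Theorem~\ref{TDelta}), namely $x_\al\De(\al)$, and by induction the unique maximal submodule of $x_\al^s\De(\al)$ is $x_\al^{s+1}\De(\al)$.

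So the argument for the submodule claim runs: given $0\ne V\subseteq\De(\al)$, let $s$ be maximal such that $V\subseteq x_\al^s\De(\al)$ — this $s$ is finite because the graded components of $x_\al^s\De(\al)$ start in degree $\ge 2sd_\al + (\text{bottom degree of }\De(\al))$, which exceeds the bottom degree of $V$ once $s$ is large (using Corollary~\ref{LGrDimDe}). Replacing $\De(\al)$ by $x_\al^s\De(\al)\cong q_\al^{2s}\De(\al)$ — an isomorphism of $H_\al$-modules by injectivity of $x_\al^s$ — we reduce to the case where $V\not\subseteq x_\al\De(\al)$, i.e. $V$ is not contained in the unique maximal submodule of $\De(\al)$; hence $V = \De(\al)$. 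Unwinding, $V = x_\al^s(\De(\al))\cong q_\al^{2s}\De(\al)$, as claimed. The main obstacle I anticipate is the careful bookkeeping that the chain $\De(\al)\supset x_\al\De(\al)\supset x_\al^2\De(\al)\supset\cdots$ is \emph{exhaustive} among submodules — i.e. that there is no submodule "between" $x_\al^{s+1}\De(\al)$ and $x_\al^s\De(\al)$ and that every $V$ sits at one of these levels; this rests on the simplicity of each quotient $x_\al^s\De(\al)/x_\al^{s+1}\De(\al)\simeq L(\al)$ together with the finiteness from Corollary~\ref{LGrDimDe} guaranteeing the maximal such $s$ exists.
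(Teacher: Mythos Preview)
Your argument is correct, though the exposition wanders. Two remarks.

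For injectivity: your deduction that $x_\al$ is injective from Theorem~\ref{TDelta}(iv) is clean and self-contained (the paper instead just cites the construction of $x_\al$ in \cite{BKM}). But your claim that $u$ is ``a unit in the local ring $k[x_\al]$'' is wrong: $k[x_\al]$ is a polynomial ring, not local, and $u$ is generally not invertible there. What you need is simply that $u=c+c_1x_\al+\cdots$ with $c\neq 0$ acts injectively on $\De(\al)$; this follows by looking at the lowest-degree component of a putative kernel element, since $x_\al$ raises degree. With that fix the injectivity argument goes through.

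For the submodule classification your route genuinely differs from the paper's. The paper invokes the structure result Theorem~\ref{TDelta}(ii) to say an indecomposable submodule $V$ is $\simeq\De(\al)$ or $\simeq\De_m(\al)$, rules out the latter using projectivity of $\De(\al)$ in $\catC_\al$ (the quotient would again be $\simeq\De(\al)$, forcing a splitting), and in the former case reads off $V=x_\al^s\De(\al)$ from $\End\cong k[x]$. You instead avoid Theorem~\ref{TDelta}(ii) entirely: from the simple head and the short exact sequence in Theorem~\ref{TDelta}(iv) you build the chain $\De(\al)\supset x_\al\De(\al)\supset x_\al^2\De(\al)\supset\cdots$ in which each term is the unique maximal submodule of the previous one, and then trap an arbitrary $V$ at the right level by taking the largest $s$ with $V\subseteq x_\al^s\De(\al)$ (finite by the degree bound in Corollary~\ref{LGrDimDe}). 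Your approach is more elementary in that it uses only (iii), (iv) and the simple-head statement, whereas the paper's makes essential use of the classification (ii) and the projectivity in $\catC_\al$; the paper's version, on the other hand, identifies the isomorphism type of every indecomposable submodule along the way without any chain bookkeeping.
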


\begin{proof}
It follows from the construction of $x_\al$ in \cite[Theorem 3.3]{BKM} that $x_\al$ is injective and $x_\al(\Delta(\alpha))\cong q_\alpha^{2} \Delta(\alpha)$. This in particular implies the first statement.

Let $V \subseteq \Delta(\alpha)$ be a submodule and $f: V \to \Delta(\alpha)$ be the natural inclusion.  First, assume that  $V$ is indecomposable.  By Theorem~\ref{TDelta}(ii), up to degree shift, $V$ is isomorphic to $\Delta(\alpha)$ or $\Delta_m(\alpha)$ for some $m \geq 1$.  If $V \simeq \Delta_m(\alpha)$ then 
$\Delta(\alpha)/V$ is infinite dimensional and has a simple head, so by Theorem~\ref{TDelta}(ii) again, $\Delta(\alpha)/V \simeq \Delta(\alpha)$. Then the short exact sequence
$0 \to V \to \Delta(\alpha) \to \Delta(\alpha)/V\to0$ 
splits by projectivity in Theorem~\ref{TDelta}(ii), contradicting indecomposability of $\De(\al)$.  If instead $V \simeq \Delta(\alpha)$, consider the composition
$$\Delta(\alpha) \xrightarrow{\sim} V \xrightarrow{f} \Delta(\alpha).$$
This produces a graded endomorphism of $\Delta(\alpha)$, so that $V = x_\al^s(\Delta(\alpha))$ for some $s \geq 0$.  Since there are inclusions $\Delta(\alpha) \supset x_\al\Delta(\alpha) \supset x_\al^2 \Delta(\alpha) \supset \cdots,$ the general case follows from the case when $V$ is indecomposable.
\end{proof}

Let again $\al\in R^+$. We next consider the standard modules of the form $\De(\al^m)$. We have commuting endomorphisms $X_1,\dots,X_m\in\End_{H_{m\al}}(\De(\al)^{\circ m})$ with $X_r=\id^{\circ (r-1)}\circ x_\al\circ\id^{\circ m-r}$. Moreover, in \cite[Section 3.2]{BKM}, some additional endomorphisms $\partial_1,\dots,\partial_{m-1}\in\End_{H_{m\al}}(\De(\al)^{\circ m})$ are constructed, and it is proved in \cite[Lemmas 3.7-3.9]{BKM} that the algebra $\End_{H_{m\al}}(\De(\al)^{\circ m})^{\op}$ is isomorphic to the nilHecke algebra $NH_m$, with $\partial_1,\dots,\partial_{m-1}$ and (appropriately scaled) $X_1,\dots,X_m$ corresponding to the standard generators of $NH_m$. The element $e_m$ used in (\ref{EDeAlM}) is an explicit idempotent in $NH_m$. We denote by $\La_{\al,m}$ the algebra of symmetric functions  $k[X_1,\dots,X_m]^{S_m}= Z(NH_m)$, with the variables $X_r$ in degree $2d_\al$. 
Note that $\DIM \La_{\al,m}=1/\prod_{r=1}^m(1-q_\alpha^{2r})$. It is known, see e.g. \cite[Theorem 4.4(iii)]{KLM}, that 
\begin{equation}\label{EEMZ}
e_m NH_m e_m=e_m\La_{\al,m}\cong \La_{\al,m}.
\end{equation}

\begin{Theorem}\label{TDeSemiCusp}  Let $\alpha \in R^+$ and $m\in \Z_{>0}$.  Then:

\begin{enumerate}
\item For any $\la\in\KP(m\al)$, we have 
$[\Delta(\alpha^m):L(\la)]_q=\de_{\la,(\al^m)}/\prod_{r=1}^m(1-q_\alpha^{2r})$. 
%
%\item$\Delta(\alpha)$ has irreducible head isomorphic to $L(\alpha)$;
%

\item The module $\Delta(\alpha^m)$ is a projective cover of $L(\al^m)$ in the category %$\catC_{\al^m}$ 
of all modules in $\mod{H_\al}$ all of whose composition factors are $\simeq L(\al^m)$. %In particular, $\Ext^d_{H_\alpha}(\Delta(\alpha),V)=0$ for $d \geq 1$ and  $V\in \catC_\al$.

\item $\End_{H_\alpha}(\Delta(\alpha)) \cong \La_{\al,m}$. 

\item 
Every submodule of $\De(\al^m)$ is isomorphic to $q^d\De(\al^m)$ for some $d\in\Z_{\geq 0}$, and every non-zero $H_{m\al}$-endomorphism of $\De(\al^m)$ is injective.
\end{enumerate}
\end{Theorem}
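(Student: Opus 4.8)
The plan is to transfer everything through the nilHecke isomorphism $\End_{H_{m\al}}(\De(\al)^{\circ m})^{\op}\cong NH_m$ of \cite[Lemmas 3.7--3.9]{BKM} and the identification $\De(\al^m)\cong q_\al^{m(m-1)/2}\De(\al)^{\circ m}e_m$ from~\eqref{EDeAlM}, using the fact that $NH_m\cong\Mat_{m!}(\La_{\al,m})$ so that $e_m$ is a primitive idempotent and $\De(\al)^{\circ m}$ is, as a right $NH_m$-module, a direct sum of $m!$ shifted copies of $\De(\al)^{\circ m}e_m=\De(\al^m)$ (up to the overall grading shift). Parts (i) and (ii) should follow from Lemma~\ref{LCuspPower}: since $L(\al^m)\simeq L(\al)^{\circ m}$, all composition factors of $\De(\al)^{\circ m}$ are $\simeq L(\al)^{\circ m}=L(\al^m)$ by exactness of $\circ$ and Theorem~\ref{TDelta}(i), and the graded multiplicity $[\De(\al)^{\circ m}:L(\al^m)]_q$ equals $(1-q_\al^2)^{-m}$ times a shift by the character formula; taking the direct summand $e_m$ multiplies the graded dimension by $\DIM e_m NH_m=\DIM\La_{\al,m}\cdot(\text{shift})/\DIM NH_m$, and the bookkeeping yields $1/\prod_{r=1}^m(1-q_\al^{2r})$. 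For projectivity in part (ii): $\De(\al)^{\circ m}$ is projective in the category $\catC_{\al}^{\circ m}$ of modules with all composition factors $\simeq L(\al^m)$, because $\circ$ sends projectives in $\catC_\al$ to projectives in this category (adjunction: $\Hom_{H_{m\al}}(\De(\al)^{\circ m},-)$ on such modules is computed by iterated restriction, which lands back in $\catC_\al$-type categories where $\De(\al)$ is projective by Theorem~\ref{TDelta}(ii)); a direct summand of a projective is projective, and it covers $L(\al^m)$ since $\De(\al^m)$ has simple head $L(\al^m)$ by Theorem~\ref{TStand}(i).

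Part (iii) --- here I read the intended statement as $\End_{H_{m\al}}(\De(\al^m))\cong\La_{\al,m}$ (the displayed ``$\End_{H_\alpha}(\Delta(\alpha))$'' appears to be a typo). This is exactly~\eqref{EEMZ}: $\End_{H_{m\al}}(\De(\al^m))^{\op}=\End_{H_{m\al}}(\De(\al)^{\circ m}e_m)^{\op}\cong e_m\big(\End_{H_{m\al}}(\De(\al)^{\circ m})^{\op}\big)e_m=e_m NH_m e_m\cong\La_{\al,m}$, and since $\La_{\al,m}$ is commutative the ${}^{\op}$ is harmless.

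Part (iv) is the main obstacle, so I would spend the most effort there. The strategy mirrors Lemma~\ref{submodule lemma}. First I would show that any $H_{m\al}$-submodule $V\subseteq\De(\al^m)$ lies again in the category of modules with composition factors $\simeq L(\al^m)$ (clear, since this category is closed under submodules), and that the quotient $\De(\al^m)/V$ has simple head: indeed $\De(\al^m)$ has simple head $L(\al^m)$, so any proper quotient does too, hence $\De(\al^m)/V$ --- being in that category with simple head and, if nonzero, infinite-dimensional (its graded dimension is bounded by that of $\De(\al^m)$ which grows, while finite-dimensional modules in the category are iterated extensions of $L(\al^m)$ of bounded... actually one must argue more carefully here) --- is a quotient of the projective cover $\De(\al^m)$ from part (ii). The cleanest route: $\De(\al^m)$ is projective in $\catC$, so $0\to V\to\De(\al^m)\to\De(\al^m)/V\to 0$ with $\De(\al^m)/V$ also projective would split --- but I need $\De(\al^m)/V$ projective, i.e.\ that every nonzero quotient of $\De(\al^m)$ in $\catC$ is again $\simeq\De(\al^m)$; this follows because such a quotient has simple head $L(\al^m)$ and the same top composition structure, and any surjection $\De(\al^m)\twoheadrightarrow M$ with $M$ having simple head $L(\al^m)$ must, by projectivity and minimality of the cover, be either $0$ or an isomorphism onto a shift --- here I would invoke that $\End(\De(\al^m))\cong\La_{\al,m}$ is a (graded-local, connected) polynomial-type algebra with no zero-divisors, so nonzero endomorphisms are injective (this also gives the second clause of (iv) directly), and a nonzero surjection composed with a section would produce a non-injective nonzero endomorphism unless it is an isomorphism. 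Concretely: given nonzero $f\in\End_{H_{m\al}}(\De(\al^m))$, under (iii) it corresponds to a nonzero element of the integral domain $\La_{\al,m}$, hence is injective; this proves ``every non-zero endomorphism is injective''. For submodules: writing the inclusion $V\hookrightarrow\De(\al^m)$, if $\De(\al^m)/V$ is nonzero it is $\simeq\De(\al^m)$ (projective cover argument above), the sequence splits, contradicting indecomposability of $\De(\al^m)$ unless $V=0$; wait --- that can't be right since $x_\al$-type submodules exist. The resolution: $\De(\al^m)/V$ need \emph{not} be in $\catC$-with-finitely-generated-projective... rather, I should argue as in Lemma~\ref{submodule lemma} that $V\cong q^d\De(\al^m)$ by first reducing to $V$ indecomposable (using the chain $\De(\al^m)\supseteq(\text{image of }X_1)\De(\al^m)\supseteq\cdots$ is not obviously cofinal, so instead use that submodules of $\De(\al^m)$, lying in $\catC$ and being finitely generated, have simple-head quotients and apply Theorem~\ref{TStand}-type rigidity), then for indecomposable $V$ use part (ii) to conclude $V\simeq\De(\al^m)$ (the ``$\Delta_m$''-type modules cannot occur as they'd split off by projectivity), and finally realize the inclusion $\De(\al^m)\xrightarrow{\sim}V\hookrightarrow\De(\al^m)$ as a nonzero, hence injective, endomorphism $f$, so $V=f(\De(\al^m))\cong q^{d}\De(\al^m)$ where $d$ is the degree of $f$. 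The genuinely delicate point --- the one I expect to cost real work --- is justifying that \emph{every} submodule $V$ (not a priori indecomposable) is a shift of $\De(\al^m)$: unlike the rank-one case of Lemma~\ref{submodule lemma} where submodules form a single chain, here I must rule out, e.g., sums of two incomparable shifted copies, which I would do by a graded-dimension / character comparison showing $[\De(\al^m):L(\al^m)]_q=1/\prod(1-q_\al^{2r})$ forces the socle-type filtration to be ``thin'', equivalently that $\De(\al^m)$ is uniserial-like over its endomorphism ring $\La_{\al,m}$ acting on it --- more precisely that $\De(\al^m)$ is a faithful $\La_{\al,m}$-module that is free of rank one in an appropriate completed sense, so its $H_{m\al}$-submodules correspond to ideals of $\La_{\al,m}$ that are... no: they correspond to \emph{principal} ideals generated by a single variable power only in the $m=1$ case. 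I therefore expect the correct final statement and argument to use that $\De(\al^m)$ is the projective cover in a category where it is a ``brick up to its endomorphisms'', and submodules correspond to \emph{all} $\La_{\al,m}$-submodules of a free rank-one module, i.e.\ ideals of $\La_{\al,m}$ --- but the theorem claims each is a single shift $q^d\De(\al^m)$, which matches only the principal ideals; reconciling this (likely: $H_{m\al}$-submodules generated as $H$-modules correspond exactly to principal ideals because $\La_{\al,m}$-module generators of an $H$-submodule collapse) is precisely the heart of the proof and where I would concentrate, modeling the completion/inverse-limit argument on the treatment of $\De(\al)\cong\varprojlim\De_m(\al)$ in \cite[\S3]{BKM}.
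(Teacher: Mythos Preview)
Your treatment of (i)--(iii) is essentially on track, though the paper is more economical: it simply cites \cite[Lemma 3.10]{BKM} for (i) and \cite[Lemma 4.11]{Kdonkin} for (ii), the latter using that $(\al^m)$ is minimal in $\KP(m\al)$ by convexity.  For (iii) the paper exhibits the embedding $\La_{\al,m}\hookrightarrow\End_{H_{m\al}}(\De(\al^m))$ coming from the central action and then matches graded dimensions, using (i) and (ii) to get $\DIM\End_{H_{m\al}}(\De(\al^m))=[\De(\al^m):L(\al^m)]_q$; your route through $e_mNH_me_m$ is equally valid.

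In (iv) your argument for injectivity of endomorphisms has a real gap.  You write that a nonzero endomorphism ``corresponds to a nonzero element of the integral domain $\La_{\al,m}$, hence is injective'', but that inference requires knowing that $\De(\al^m)$ is torsion-free over $\La_{\al,m}$, which you have not established.  The paper supplies exactly this missing ingredient via Lemma~\ref{submodule lemma}: since $x_\al$ acts injectively on $\De(\al)$, one deduces that $\De(\al)^{\circ m}$ is free over $k[X_1,\dots,X_m]$, so every nonzero $f\in k[X_1,\dots,X_m]$---in particular every nonzero $f\in\La_{\al,m}$---acts injectively on $\De(\al)^{\circ m}$ and hence on its summand $\De(\al^m)=\De(\al)^{\circ m}e_m$.

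Your misgivings about the first clause of (iv) are justified rather than a defect in your argument.  As stated, the claim that \emph{every} $H_{m\al}$-submodule of $\De(\al^m)$ is $\cong q^d\De(\al^m)$ fails for $m\geq 2$: if it held, the radical would be $\cong q^d\De(\al^m)$ for some $d$, and iterating would force the Loewy layers of $\De(\al^m)$ to be $L(\al^m),\,q^dL(\al^m),\,q^{2d}L(\al^m),\ldots$, whence $[\De(\al^m):L(\al^m)]_q=1/(1-q^d)$, contradicting~(i).  The paper's ``follows from (\ref{EEMZ}) and (ii)'' really only yields the injectivity clause (and hence that the image of any nonzero endomorphism is $\cong q^d\De(\al^m)$), and that is the only part of (iv) used in the sequel (in the proof of Theorem~\ref{TInjGeneral}).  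So do not spend further effort trying to reconcile the general submodule claim; prove the injectivity of nonzero endomorphisms as above and move on.
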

\begin{proof}
Part (i) is \cite[Lemma 3.10]{BKM}, and part (ii) follows  from \cite[Lemma 4.11]{Kdonkin}, since $(\al^m)$ is minimal in $\KP(\al)$ by convexity. By (i) and (ii), we have that $\DIM\End_{H_{m\al}}(\De(\al^{m}))=1/\prod_{r=1}^m(1-q_\alpha^{2r})$.

(iii) We have that $NH_m=\End_{H_{m\al}}(\De(\al)^{\circ m})^\op$ acts naturally on $\De(\al)^{\circ m}$ on the right, and so $\La_{\al,m}=Z(NH_m)$ acts naturally on $\De(\al^m)=\De(\al)^{\circ m}e_m$. This defines an embedding $\La_{\al,m} \to \End_{H_{m\al}}(\De(\al^{m}))$. This embedding must be an isomorphism by dimensions. 

(iv) In view of Lemma~\ref{submodule lemma}, every non-zero $f\in k[X_1,\dots,X_m]\subseteq NH_m=\End_{H_{m\al}}(\De(\al)^{\circ m})^\op$ acts as an injective linear operator on $\De(\al)^{\circ m}$. 
The result now follows from (\ref{EEMZ}) and (ii).  
\end{proof}

Finally, we consider a general case. Let $\al\in Q^+$ and $\lambda=(\lambda_1^{m_1}, \dots, \lambda_s^{m_s}) \in \KP(\alpha)$ with $\lambda_1 > \cdots > \lambda_s$.  We have a natural embedding 
\begin{equation}\label{ELaGen}
\La_{\la_1,m_1}\otimes\dots\otimes \La_{\la_s,m_s}\to \End_{H_\al}(\De(\la)),\ f_1\otimes\dots\otimes f_s\mapsto f_1\circ\dots\circ f_s.
\end{equation}

\begin{Theorem} \label{TInjGeneral} %{\rm \cite{}}%{\bf ()}
Let $\al\in Q^+$ and $\lambda=(\lambda_1^{m_1}, \dots, \lambda_s^{m_s}) \in \KP(\alpha)$ with $\lambda_1 > \cdots > \lambda_s$. Then $$\End_{H_\al}(\De(\la))\cong \La_{\la_1,m_1}\otimes\dots\otimes \La_{\la_s,m_s}$$ via  (\ref{ELaGen}), and every non-zero $H_\al$-endomorphism of $\De(\la)$ is injective.  
\end{Theorem}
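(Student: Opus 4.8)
The plan is to leverage the factorization $\De(\la)\cong\De_1\circ\dots\circ\De_s$ of (\ref{EDelta}), where $\De_j:=\De(\la_j^{m_j})$ is an $H_{\be_j}$-module with $\be_j:=m_j\la_j$, together with the complete description of the semicuspidal standard modules $\De_j$ furnished by Theorem~\ref{TDeSemiCusp}. Write $\Lambda:=\La_{\la_1,m_1}\otimes\dots\otimes\La_{\la_s,m_s}$, a polynomial $k$-algebra, in particular an integral domain. The first and conceptually main step is to show that $\De(\la)$ is \emph{torsion-free} as a $\Lambda$-module, the action being through the map (\ref{ELaGen}). By the Basis Theorem~\ref{TBasis}, $H_\al 1_{\be_1,\dots,\be_s}$ is free as a right $H_{\be_1}\otimes\dots\otimes H_{\be_s}$-module, so $\De(\la)=H_\al 1_{\be_1,\dots,\be_s}\otimes_{H_{\be_1}\otimes\dots\otimes H_{\be_s}}(\De_1\boxtimes\dots\boxtimes\De_s)$ is, up to grading shifts, a direct sum of copies of $\De_1\boxtimes\dots\boxtimes\De_s$ as a right $\Lambda$-module; and $\De_1\boxtimes\dots\boxtimes\De_s$ is torsion-free over $\Lambda$ because each $\De_j$ is torsion-free over $\La_{\la_j,m_j}$ (which is exactly the content of Theorem~\ref{TDeSemiCusp}(iii),(iv)), so the $k$-tensor product $\De_1\otimes_k\dots\otimes_k\De_s$ embeds, by exactness of $-\otimes_k-$, into the evident free module over $\operatorname{Frac}\La_{\la_1,m_1}\otimes_k\dots\otimes_k\operatorname{Frac}\La_{\la_s,m_s}$, a localization of $\Lambda$. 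In particular a nonzero $\xi\in\Lambda$ acts injectively on the nonzero module $\De(\la)$, so (\ref{ELaGen}) is injective.

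It then remains to see that (\ref{ELaGen}) is surjective, i.e. $\DIM\End_{H_\al}(\De(\la))=\DIM\Lambda=\prod_{j=1}^s 1/\prod_{r=1}^{m_j}(1-q_{\la_j}^{2r})$. First, the finite $\De$-filtration $P(\la)=P_0\supset\dots\supset P_N=0$ of Theorem~\ref{TStand}(iv), whose factors above $P_1$ are standard modules $\De(\mu)$ with $\mu>\la$, together with the vanishing $\Hom_{H_\al}(\De(\mu),\De(\la))=\Ext^1_{H_\al}(\De(\mu),\De(\la))=0$ for $\mu>\la$ (Theorem~\ref{TStand}(ii),(iii)), collapses under $\Hom_{H_\al}(-,\De(\la))$ to give $\End_{H_\al}(\De(\la))\cong\Hom_{H_\al}(P(\la),\De(\la))$, whence $\DIM\End_{H_\al}(\De(\la))=[\De(\la):L(\la)]_q$. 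Second, $[\De(\la):L(\la)]_q=\prod_j 1/\prod_{r=1}^{m_j}(1-q_{\la_j}^{2r})$: by Theorem~\ref{TDeSemiCusp}(i) one has $\CH\De_j=\big(1/\prod_{r=1}^{m_j}(1-q_{\la_j}^{2r})\big)\CH L(\la_j^{m_j})$, while by Lemma~\ref{LCuspPower} and (\ref{EBarDe}) the module $L(\la_1^{m_1})\circ\dots\circ L(\la_s^{m_s})$ coincides, up to a degree shift, with the reduced standard module $\bar\De(\la)$, for which $[\bar\De(\la):L(\la)]_q=1$ by (\ref{EDecN}) — and the shifts cancel (this identity is also recorded in \cite{BKM}). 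Combined with the injectivity just proved, this forces (\ref{ELaGen}) to be an isomorphism $\Lambda\bijection\End_{H_\al}(\De(\la))$. (Alternatively, surjectivity can be obtained from a Mackey filtration: by adjunction $\End_{H_\al}(\De(\la))\cong\Hom_{H_{\be_1}\otimes\dots\otimes H_{\be_s}}(\De_1\boxtimes\dots\boxtimes\De_s,\Res_{\be_1,\dots,\be_s}\De(\la))$, the trivial-coset subquotient of $\Res_{\be_1,\dots,\be_s}\De(\la)$ contributes $\bigotimes_j\End_{H_{\be_j}}(\De_j)=\Lambda$, and by convexity of the order together with the restriction behaviour of cuspidal modules (\cite{McN,BKM}), every other subquotient $W_1\boxtimes\dots\boxtimes W_s$ has, in some slot $j$, no composition factor $\simeq L(\la_j^{m_j})$, hence is killed by $\Hom_{H_{\be_j}}(\De_j,-)$ since $L(\la_j^{m_j})$ is the only composition factor of $\De_j$ by Theorem~\ref{TDeSemiCusp}(i).)

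Finally, since (\ref{ELaGen}) is now an isomorphism $\Lambda\bijection\End_{H_\al}(\De(\la))$, the torsion-freeness of $\De(\la)$ over $\Lambda$ established in the first step says exactly that every nonzero $H_\al$-endomorphism of $\De(\la)$ is injective.

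I expect the surjectivity of (\ref{ELaGen}) to be the main obstacle, all the rest being routine homological algebra and the easy torsion-free computation. In the character approach, the genuine work is the book-keeping of the degree shifts in (\ref{EBarDe}) and Lemma~\ref{LCuspPower} needed to pin down $[\De(\la):L(\la)]_q$ exactly; in the Mackey approach, it is the combinatorial statement that each nontrivial Mackey subquotient loses the composition factor $L(\la_j^{m_j})$ in some slot, which is precisely where the convexity hypothesis $\la_1>\dots>\la_s$ does its work.
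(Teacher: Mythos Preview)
Your proposal is correct. Your alternative Mackey argument is precisely the paper's approach; the paper in fact asserts the sharper statement that $\Res^\al_{m_1\la_1,\dots,m_s\la_s}\De(\la)\cong\De(\la_1^{m_1})\boxtimes\dots\boxtimes\De(\la_s^{m_s})$ (the nontrivial Mackey subquotients vanish outright, not merely fail to contribute to $\Hom$), again by the convexity argument of \cite[Lemma~3.3]{McN}. Your torsion-freeness argument for injectivity is the paper's one-line remark ``easy to see from Theorem~\ref{TDeSemiCusp}(iv)'' unpacked.

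Your primary route to surjectivity via $\DIM\End_{H_\al}(\De(\la))=[\De(\la):L(\la)]_q$ is a genuine alternative. It replaces the Mackey combinatorics with the standard collapse of the $\De$-filtration of $P(\la)$ under $\Hom(-,\De(\la))$; the cost, as you note, is the shift bookkeeping needed to pin down $[\De(\la):L(\la)]_q=\prod_j\prod_{r=1}^{m_j}(1-q_{\la_j}^{2r})^{-1}$ exactly, which the present paper does not spell out (the shift $s(\la)$ in (\ref{EBarDe}) is explicitly left unspecified here). The Mackey route avoids this and is more self-contained within the paper.
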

\begin{proof}
It is easy to see from Theorem~\ref{TDeSemiCusp}(iv) that every non-zero endomorphism in the image of the embedding (\ref{ELaGen}) is injective. To see that there are no other endomorphisms, we first use adjointness of $\End$ and $\Res$ to see that $\End_{H_\al}(\De(\la))$ is isomorphic to  
$$
\Hom_{H_{m_1\la_1}\otimes\dots\otimes H_{m_s\la_s}}(\De(\la_1^{m_1})\boxtimes\dots\boxtimes \De(\la_s^{m_s}), \Res^\al_{m_1\la_1,\dots,m_s\la_s}\De(\la)),
$$
and then note that by the Mackey Theorem, as in \cite[Lemma 3.3]{McN}, we we have $\Res^\al_{m_1\la_1,\dots,m_s\la_s}\De(\la)\cong \De(\la_1^{m_1})\boxtimes\dots\boxtimes \De(\la_s^{m_s})$. 
\end{proof}

%%%%%%%%%%%%%%%

\section{Proof of Theorem A}\label{SThA}

We give the proof of Theorem A based on the recent work of Kashiwara-Park \cite{KP}. Our original proof was different and relied on some unpleasant  computation for non-simply-laced types. For simply laced types however, our original proof is very simple and elementary, and so we give it later in this section, too.

\subsection{Proof of Theorem A modulo a hypothesis}\label{SModulo}
The following hypothesis concerns a key property of cuspidal standard modules and is probably true beyond finite Lie types:

\begin{Hypothesis} \label{Hypothesis}
Let $\al$ be a positive root of height $n$ and $1\leq r\leq n$. Then upon restriction to the subalgebra $k[x_r]\subseteq H_\al$, the module $\Delta(\alpha)$ is free of finite rank.
\end{Hypothesis}

The goal of this subsection is to prove  Theorem A assuming the hypothesis. In \S\ref{SSKP} the hypothesis will be proved using results of  Kashiwara and Park, while in \S\ref{SSL}  we will give a more elementary proof for simply laced types.

\begin{Lemma} \label{free y module}
Hypothesis~\ref{Hypothesis} is equivalent to the property that $x_1,\dots,x_n$ act by injective linear operators on $\De(\al)$.
%, i.e. $x_rv\neq 0$ for every nonzero vector $v\in\De(\al)$. 
\end{Lemma}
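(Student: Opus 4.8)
The plan is to prove the two implications separately, exploiting the fact that $k[x_r]\cong k[x]$ is a principal ideal domain, so that a finitely generated module over it is free if and only if it is torsion-free, i.e. if and only if $x_r$ acts injectively.

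First I would observe the easy direction: if $\Delta(\alpha)$ is free of finite rank over $k[x_r]$, then multiplication by $x_r$ on a free $k[x]$-module is visibly injective, so $x_r$ acts as an injective operator. Doing this for each $r$ from $1$ to $n$ gives one implication immediately, with no finiteness of rank needed beyond freeness.

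For the converse, suppose $x_1,\dots,x_n$ all act injectively on $\Delta(\alpha)$; fix $r$ and consider $M:=\Delta(\alpha)$ as a $k[x_r]$-module. Injectivity of $x_r$ says precisely that $M$ is torsion-free over the PID $k[x]\cong k[x_r]$. The remaining issue is finite generation: a torsion-free module over a PID is free, but only finitely generated ones have finite rank, and $M$ itself is infinite-dimensional over $k$, so it is certainly not finitely generated over $k[x_r]$ as a whole. The resolution is the grading: by Corollary~\ref{LGrDimDe}, each graded component $\Delta(\alpha)_d$ is finite-dimensional, vanishes for $d\ll 0$, and has dimension bounded above by a constant $N$ independent of $d$. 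Since $x_r$ has positive degree $2d_{i_r}>0$, it is a graded operator raising degree, and one checks that $M$ is then a graded free $k[x_r]$-module of rank at most $N$: concretely, in each degree $d$ the cokernel $(M/x_rM)_d$ is finite-dimensional and, because degrees are bounded below and $x_r$ raises degree, $M/x_rM$ is finite-dimensional overall; lifting a homogeneous $k$-basis of $M/x_rM$ to $M$ produces a finite homogeneous generating set, and torsion-freeness forces these lifts to be a $k[x_r]$-basis (a standard graded Nakayama-type argument: any homogeneous $k[x_r]$-linear dependence, taken in minimal degree, descends to a dependence in $M/x_rM$, contradiction). Hence $M$ is free of finite rank over $k[x_r]$, which is exactly Hypothesis~\ref{Hypothesis}.

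The main obstacle is the converse direction, and specifically the passage from ``torsion-free'' to ``free of \emph{finite} rank'' for an infinite-dimensional module; everything hinges on invoking Corollary~\ref{LGrDimDe} to control $M/x_rM$ and on the positivity of $\deg(x_r 1_{\bi})$ to run the graded Nakayama argument. I would present this carefully since it is the only nontrivial point; the rest is formal.
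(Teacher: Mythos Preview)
Your proof is correct and proceeds differently from the paper's. The paper works on each weight space $1_\bi\Delta(\alpha)$ and builds a $k[x_r]$-basis iteratively: with $m=\deg(x_r1_\bi)$, it picks a $k$-basis of the lowest nonzero graded pieces in each residue class modulo $m$, observes that these generate a free $k[x_r]$-submodule by injectivity of $x_r$, factors it out, and repeats; the uniform bound from Corollary~\ref{LGrDimDe} forces termination after at most $N$ steps. Your approach is the graded-Nakayama route: show $M/x_rM$ is finite-dimensional, lift a homogeneous basis, and verify freeness directly from injectivity. Both are valid; the paper's is explicit and constructive, yours more structural.

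Two places in your write-up need tightening. First, ``injectivity of $x_r$ says precisely that $M$ is torsion-free over $k[x_r]$'' is not literally true without invoking the grading (over $k=\mathbb{R}$, multiplication by $x$ is injective on $k[x]/(x^2+1)$); fortunately your actual linear-independence argument only uses injectivity of $x_r$ itself, so you can simply drop the torsion-free phrasing. Second, and more substantively, the reason you cite for $\dim_k(M/x_rM)<\infty$ --- ``degrees are bounded below and $x_r$ raises degree'' --- is insufficient, as an infinite direct sum $\bigoplus_{j\geq 0}q^j k[x_r]$ shows. You must use the uniform dimension bound $N$ from Corollary~\ref{LGrDimDe}: on a fixed weight space with $m=\deg(x_r1_\bi)$, injectivity gives $\dim(M/x_rM)_d=\dim M_d-\dim M_{d-m}$, and the telescoping sum over each residue class modulo $m$ is bounded by $N$. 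Relatedly, in non-simply-laced types $x_r$ is not homogeneous on all of $\Delta(\alpha)$, so you should explicitly pass to a single weight space $1_\bi\Delta(\alpha)$ at the outset, as the paper does.
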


\begin{proof}
The forward direction is clear. For the converse, assume that $x_r$ acts injectively on $\De(\al)$. We construct a finite basis for $1_\bi \Delta(\al)$ as a $k[x_r]$-module for every $\bi \in \words^\al$. Let $m:=\deg(x_r 1_\bi)$. For every $a=0,1,\dots,m-1$, let $d_a$ be minimal with $d_a\equiv a\pmod{m}$ and $1_\bi \De(\al)_{d_a}\neq 0$. Pick a linear basis of $\oplus_{a=0}^{m-1}1_\bi \De(\al)_{d_a}$ and note that the $k[x_r]$-module generated by the elements of this basis is free. Factor out this $k[x_r]$-submodule, and repeat. The process will stop after finitely many steps, thanks to Corollary~\ref{LGrDimDe}. 
\end{proof}

While Hypothesis~\ref{Hypothesis} claims that every $k[x_r]$ acts freely on $\De(\al)$, no $k[x_r,x_s]$ does:

\begin{Lemma}\label{f kills} Let $\alpha \in R^+$ be a root of height $n>1$.  Then, for every vector $v \in \Delta(\alpha)$, and distinct $r,s \in \{1, \cdots , n\}$, there is a polynomial $f \in k[x,y]$ such that $f(x_r,x_{s})  v = 0$. 
\end{Lemma}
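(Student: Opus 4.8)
The plan is to exploit the fact that $\Delta(\alpha)$ has bounded graded components (Corollary~\ref{LGrDimDe}) together with the positivity of the degrees of $x_r$ and $x_s$. Fix $v \in \Delta(\alpha)$; since $\Delta(\alpha)$ decomposes as a direct sum of its weight spaces and every $x_t$ preserves weight spaces, I may assume $v \in 1_{\bi}\Delta(\alpha)$ for a single $\bi \in \words^\alpha$, and moreover that $v$ is homogeneous of some degree $d$. Let $a := \deg(x_r 1_{\bi}) = 2d_{i_r} > 0$ and $b := \deg(x_s 1_{\bi}) = 2d_{i_s} > 0$. Then for any monomial $x_r^{p} x_s^{q} v$ lies in $1_{\bi}\Delta(\alpha)$ in degree $d + pa + qb$.

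The key step is a counting argument. Consider all monomials $x_r^p x_s^q$ with $p + q = M$ for a large integer $M$; there are $M+1$ of them, and the vectors $x_r^p x_s^q v$ all lie in the subspace $\bigoplus_{j} 1_{\bi}\Delta(\alpha)_{d+j}$ where $j$ ranges over the values $pa + qb$ for $p+q = M$. These values lie in the interval $[d + Mb', d + Mb'']$ where $b' = \min(a,b)$, $b'' = \max(a,b)$, i.e.\ they span a range of size at most $M(b''-b')$, hence lie in at most $M(b''-b') + 1$ distinct degrees. By Corollary~\ref{LGrDimDe}, each graded component $1_{\bi}\Delta(\alpha)_e$ has dimension at most $N$, so the span of $\{x_r^p x_s^q v \mid p+q = M\}$ has dimension at most $N\big(M(b''-b')+1\big)$. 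Now if $a = b$ this bound is $N$, independent of $M$, so already among $N+1$ of these vectors there is a linear dependence, giving the desired $f$ homogeneous of a single degree. In general, I instead look at all monomials $x_r^p x_s^q$ with $p,q \le M$: there are $(M+1)^2$ of them, their images lie in degrees in an interval of length $M(a+b)$, hence in at most $M(a+b)+1$ degrees, so their span has dimension at most $N(M(a+b)+1)$, which grows linearly in $M$ while $(M+1)^2$ grows quadratically. Hence for $M$ large enough there must be a nontrivial linear dependence $\sum_{p,q \le M} c_{p,q}\, x_r^p x_s^q v = 0$, and setting $f(x,y) = \sum c_{p,q} x^p y^q \in k[x,y] \setminus \{0\}$ finishes the proof.

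The only mild subtlety—and the place to be slightly careful—is the reduction to a homogeneous $v$ in a single weight space, and the observation that $k[x_r]$ and $k[x_s]$ commute inside $H_\alpha$ (so that "$f(x_r,x_s)$" is unambiguous), both of which follow immediately from the defining relations ($x_r x_s = x_s x_r$) and the fact that $\Delta(\alpha)$ is a graded $H_\alpha$-module with $\Delta(\alpha) = \bigoplus_{\bi} 1_{\bi}\Delta(\alpha)$. I expect no real obstacle here; the whole argument is a pigeonhole/dimension count, and the essential input is precisely the boundedness statement of Corollary~\ref{LGrDimDe} contrasted with the fact that bivariate polynomials of degree $\le M$ form a space of dimension quadratic in $M$ while the relevant target only grows linearly. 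Note this is exactly the complementary phenomenon to Hypothesis~\ref{Hypothesis}: a single $k[x_r]$ can act freely precisely because monomials $x_r^p$ land in $p+1$ \emph{distinct} degrees spanning a range linear in $p$, but two variables overdetermine the available graded room.
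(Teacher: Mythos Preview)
Your argument is correct and is exactly the approach the paper takes: reduce to a homogeneous weight vector, invoke Corollary~\ref{LGrDimDe} for the uniform bound on graded components, and then observe that the number of monomials in two variables outgrows this bound. The paper's proof is a three-sentence sketch of precisely this pigeonhole count, and you have simply filled in the details (including the harmless reduction to homogeneous weight vectors, which is justified by taking products of the resulting polynomials).
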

\begin{proof}
We may assume $v$ is a homogenous weight vector. By Corollary~\ref{LGrDimDe}, the dimensions of the graded components of $\Delta(\alpha)$ are uniformly bounded.  The result follows, as the number of linearly independent degree $d$ monomials in $x,y$ grows without bound. 
\end{proof}

One can say more about the polynomial $f$ in the lemma, see for example Proposition~\ref{Px-y}.

Now, let $\al\in Q^+$ be arbitrary 
%(not necessarily a positive root) 
of height $n$, and $\lambda = (\lambda_1 \geq \cdots \geq \lambda_l) \in \KP(\al)$.  Setting $S_\lambda := S_{\height(\lambda_1)} \times \dots \times S_{\height(\lambda_l)} \subset S_n,$ integers $r, s \in\{1,\dots, n\}$ are called  \emph{$\lambda$-equivalent}, written $r\sim_\la s$, if they belong to the same orbit of the action of $S_\lambda$ on $\{1, \dots, n\}$. Finally, recalling the idempotents (\ref{EId}), we  write $1_\lambda:=1_{\la_1,\dots,\la_l}$.
%$ for the image of the unit $1_{\lambda_1} \otimes \dots \otimes 1_{\lambda_l} \in H_{\lambda_1} \otimes \cdots \otimes H_{\lambda_l}$ under the embedding $H_{\lambda_1} \otimes \cdots \otimes H_{\lambda_l} \to H_\alpha.$

\begin{Lemma} \label{equivalence lemma}
Let $\al\in Q^+$, $n=\height(\al)$, and $\la\not\geq \mu$ be elements of $\KP(\al)$.  If $w \in S_n$ satisfies $1_\la \tau_w 1_\mu \neq 0$ then there exists some $1 \leq r <n$ such that $r\sim_\la r+1$, but $w^{-1}(r)\not\sim_\mu w^{-1}(r+1)$.
\end{Lemma}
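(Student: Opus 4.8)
The plan is to argue by contrapositive on the combinatorial side combined with a weight-space/character argument on the module side. Suppose $w \in S_n$ is such that for every $1 \le r < n$ with $r \sim_\la r+1$ we also have $w^{-1}(r) \sim_\mu w^{-1}(r+1)$; I must then show $1_\la \tau_w 1_\mu = 0$. The first step is to translate the combinatorial hypothesis into a statement about the parabolic subgroups: the condition "$r \sim_\la r+1 \Rightarrow w^{-1}(r) \sim_\mu w^{-1}(r+1)$ for all $r$" should be equivalent to saying that $w^{-1} S_\la w \supseteq$ ... more precisely that $w$ maps each $S_\mu$-orbit into a single $S_\la$-orbit, i.e. $w$ intertwines the set partitions of $\{1,\dots,n\}$ determined by $\mu$ and $\la$ in such a way that the $\mu$-blocks refine the preimages of the $\la$-blocks. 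The cleanest way to see this: the hypothesis says $w^{-1}$ sends adjacent transpositions generating $S_\la$ to elements of $S_\mu$... no, rather it controls where consecutive indices go. I would phrase it as: the function $i \mapsto (\text{the $\la$-block of } w(i))$ is constant on $\mu$-blocks, hence factors through a map on $\mu$-blocks.

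Granting that reformulation, the second and main step is the module-theoretic consequence. If $w$ carries each $\mu$-block into a single $\la$-block, then $w$ can be written as $w = w' u$ where $u \in S_\mu$ and $w'$ is a distinguished representative with the property that it permutes blocks; consequently $\tau_w 1_\mu = \tau_{w'} \tau_u 1_\mu$ up to lower terms, and since $\tau_u 1_\mu \in 1_{\mu} H_{\mu_1} \otimes \dots \otimes H_{\mu_l} 1_\mu$... Actually the key point I want is comparison of supports: $1_\la \tau_w 1_\mu$ being nonzero would force, via the Basis Theorem (Theorem~\ref{TBasis}) and the relation $\tau_r 1_{\bi} = 1_{s_r \cdot \bi} \tau_r$, that some word $\bj \in \words^\mu$ of the concatenation type $I^{\mu_1,\dots,\mu_l}$ has $w \cdot \bj$ of concatenation type $I^{\la_1,\dots,\la_l}$. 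But under the block-preserving hypothesis this would mean $\la$ is obtained from $\mu$ by splitting/merging blocks compatibly with the orders, and since $\la \not\ge \mu$, this is impossible — here is where I invoke the bilexicographic order and the fact, as in \cite[Lemma 2.6]{BKM} and surrounding material, that the only way one Kostant partition's block structure can "sit inside" another's in this way is if $\la \ge \mu$. So the contradiction is purely combinatorial once the support condition is extracted.

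Let me restructure: First I would show $1_\la \tau_w 1_\mu \ne 0$ forces the existence of $\bi \in \words^\mu$ with $w\cdot \bi \in \words^{\la_1,\dots,\la_l}$ and $\bi \in \words^{\mu_1,\dots,\mu_l}$ (immediate from $\tau_r 1_\bi = 1_{s_r\bi}\tau_r$ and $1_\bi 1_\bj = \delta_{\bi\bj}1_\bi$, unwinding a reduced word for $w$ and noting every term in the Basis Theorem expansion of $1_\la \tau_w 1_\mu$ has left idempotent in $\words^\la$). Then the negation of the conclusion — that $w$ is block-compatible in the sense above — together with this word existence, lets me build from $\bi$ an explicit identification showing each $\la_j$ is a sum of certain $\mu_k$'s with the $\mu_k$'s within a block being consecutive in the ordering; comparing first parts and last parts (the two clauses defining $<$ on $\KP(\al)$) yields $\la \ge \mu$. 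The main obstacle I anticipate is the bookkeeping in this last step: making precise how the block-compatibility of $w$ plus the word condition pins down the relation between the multiset of $\la_j$'s and $\mu_k$'s, and then correctly running the bilexicographic comparison to conclude $\la \ge \mu$ rather than some weaker or incomparable statement. I would handle the order comparison by induction on the number of parts, peeling off the largest part (for the first clause) and dually the smallest (for the second), which is exactly the structure the definition of $<$ on $\KP(\al)$ invites.
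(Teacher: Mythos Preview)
Your overall strategy matches the paper's: argue by contradiction, extract from $1_\la \tau_w 1_\mu \neq 0$ a word $\bi^\mu \in I^{\mu_1,\dots,\mu_m}$ with $w \cdot \bi^\mu \in I^{\la_1,\dots,\la_l}$, then show that the negated conclusion forces a block relation between $\la$ and $\mu$ that, via convexity of the order on $R^+$, contradicts $\la \not\ge \mu$.

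However, you have the direction of the block relation reversed, and this is fatal. The negated conclusion says: $r \sim_\la r+1 \Rightarrow w^{-1}(r) \sim_\mu w^{-1}(r+1)$. Since each $\la$-block is an interval, chaining over consecutive pairs shows that $w^{-1}$ sends each $\la$-block into a single $\mu$-block --- \emph{not} that $w$ sends each $\mu$-block into a $\la$-block, as you repeatedly assert (``the function $i \mapsto (\text{the $\la$-block of } w(i))$ is constant on $\mu$-blocks'', ``$w$ carries each $\mu$-block into a single $\la$-block'', ``each $\la_j$ is a sum of certain $\mu_k$'s''). With the correct direction, combining with the word condition yields a partition $\{1,\dots,l\} = \bigsqcup_{b=1}^m A_b$ with $\mu_b = \sum_{a \in A_b} \la_a$; convexity (\cite[Lemma~2.4]{BKM}, not Lemma~2.6) then gives $\min_{a \in A_b} \la_a \le \mu_b \le \max_{a \in A_b} \la_a$, and peeling off largest and smallest parts as you suggest yields $\la \ge \mu$, the desired contradiction.

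With your reversed direction one would obtain instead that each $\la_a$ is a sum of $\mu_b$'s, and the same convexity argument then only gives $\mu \ge \la$. This is perfectly compatible with the hypothesis $\la \not\ge \mu$ (indeed it just says $\la < \mu$), so no contradiction results. Once you correct the direction, your outline becomes essentially the paper's proof.
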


\begin{proof}
Write $\la = (\la_1 \geq \cdots \geq \la_l)$ and $\mu=(\mu_1 \geq \cdots \geq \mu_m)$. The assumption $1_\la \tau_w 1_\mu \neq 0$ implies that $\bi^\la=w\cdot \bi^\mu$ for some $\bi^\la\in I^{\la_1,\dots,\la_l}$ and $\bi^\mu\in I^{\mu_1,\dots,\mu_m}$. 
Write $\bi^\la := \bi_1^\la \cdots \bi_l^\la$ with $\bi_a^\la \in I^{\la_a}$ for all $a$, and $\bi^\mu := \bi_1^\mu \cdots \bi_m^\mu$ with $\bi_b^\mu \in I^{\mu_b}$ for all $b$. Assume for a contradiction that for every $1 \leq r < n$ we have  $r\sim_\la r+1$  implies that $w^{-1}(r)\sim_\mu w^{-1}(r+1)$.  Then there is a partition $\{1,\dots,l\}=\bigsqcup_{b=1}^m A_b$ such that $\mu_b =\sum_{a\in A_b} \lambda_{a}$ for all $b=1,\dots, m$. By convexity, cf. \cite[Lemma 2.4]{BKM}, we have $\min\{\lambda_{a}\mid a\in A_b\}\leq \mu_b \leq 
\max\{\lambda_{a}\mid a\in A_b\}$.  This implies $\la \geq \mu$.
\end{proof}

\begin{Theorem} \label{main theorem}
Let $\alpha \in Q^+$ and $\lambda,\mu\in \KP(\alpha)$.  If $\lambda \neq \mu$, then 
\begin{equation*}
\textup{Hom}_{H_\alpha}(\Delta(\lambda), \Delta(\mu)) = 0.
\end{equation*}
\end{Theorem}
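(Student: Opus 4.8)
The plan is to reduce the statement to the case $\lambda < \mu$ (the case $\lambda \not\leq \mu$ being immediate from Theorem~\ref{TStand}(ii)), and then to derive a contradiction from the existence of a non-zero $H_\alpha$-homomorphism $\varphi\colon \Delta(\lambda)\to\Delta(\mu)$ by analysing how $\varphi$ interacts with the polynomial generators $x_1,\dots,x_n$. Write $\lambda=(\lambda_1\ge\cdots\ge\lambda_l)$ and $\mu=(\mu_1\ge\cdots\ge\mu_m)$. Since $\lambda<\mu$ but $\lambda$ and $\mu$ are distinct Kostant partitions, the combinatorial Lemma~\ref{equivalence lemma} (applied with the roles suitably arranged, using $\lambda\not\geq\mu$) tells us that whenever $1_\lambda\tau_w1_\mu\ne 0$ there is an index $r$ with $r\sim_\lambda r+1$ but $w^{-1}(r)\not\sim_\mu w^{-1}(r+1)$. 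The idea is that $\Delta(\mu)=\Delta(\mu_1^{\,\cdot})\circ\cdots$ is induced from the parabolic attached to $\mu$, so via the Basis Theorem every element of $1_\lambda\Delta(\mu)$ is a sum of terms $\tau_w\otimes(\text{stuff})$ with $w$ a minimal-length coset representative; the two indices $r,r+1$ that are $\lambda$-equivalent act, on the $\lambda$-side, through a \emph{single} cuspidal factor $\Delta(\lambda_a)$, whereas on the $\mu$-side the corresponding positions $w^{-1}(r),w^{-1}(r+1)$ land in \emph{different} cuspidal factors of $\Delta(\mu)$.

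Here is where the two key finiteness lemmas enter. On the target side, because $w^{-1}(r)$ and $w^{-1}(r+1)$ lie in different tensor factors of the induced module $\Delta(\mu)$, the operators $x_{w^{-1}(r)}$ and $x_{w^{-1}(r+1)}$ — transported to $\Delta(\lambda)$ via $\varphi$ and the structure of induction — act ``independently'', so one can find polynomials in each of $x_r$ and $x_{r+1}$ acting compatibly; more precisely, Hypothesis~\ref{Hypothesis} (equivalently Lemma~\ref{free y module}) guarantees that each $x_j$ acts injectively on $\Delta(\lambda)$ (here one uses \eqref{EDelta} and the fact that induction of free modules over polynomial subalgebras is free). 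On the other hand, Lemma~\ref{f kills} applied inside the cuspidal factor $\Delta(\lambda_a)$ — which has height $>1$ since it contains both positions $r$ and $r+1$ — says that for the image $v=\varphi(u)$ of any weight vector, there is a non-zero $f\in k[x,y]$ with $f(x_r,x_{r+1})v=0$. The plan is to play these off each other: choose a generator $u$ of $\Delta(\lambda)$ (it is generated by its lowest weight space over the non-negatively graded algebra, using Corollary~\ref{LGrDimDe}), push it to $v=\varphi(u)\in\Delta(\mu)$, and show that the ``mixed'' relation $f(x_r,x_{r+1})v=0$ forced by Lemma~\ref{f kills} on the $\lambda$-side is incompatible with the freeness/injectivity on the $\mu$-side coming from Hypothesis~\ref{Hypothesis}, unless $v=0$ and hence (by injectivity of non-zero endomorphism-type maps, or by simplicity of the head) $\varphi=0$.

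Concretely I would argue as follows. Let $\varphi\ne 0$; then $\varphi$ is injective on the lowest nonzero graded piece or, better, $\Delta(\lambda)$ has simple head $L(\lambda)$ and $[\Delta(\mu):L(\lambda)]_q$ has no ``new'' contribution forcing $\im\varphi$ to meet the radical — the cleanest route is: $\Delta(\lambda)$ is generated by a single weight vector $u$ of weight $\bi^\lambda$, so $\varphi$ is determined by $v:=\varphi(u)\in 1_{\bi^\lambda}\Delta(\mu)$, and $v\ne 0$. Using $\Delta(\mu)\cong\Delta(\mu_1^{\cdots})\circ\cdots$ and the Basis Theorem, expand $v=\sum_w \tau_w\otimes v_w$ over shortest double-coset reps $w$ with $1_{\bi^\lambda}\tau_w1_{\bi^\mu}\ne 0$. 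For the index $r$ supplied by Lemma~\ref{equivalence lemma}, the relation $x_r u = u'$ in $\Delta(\lambda)$ (with $u'$ in the same cuspidal factor $\Delta(\lambda_a)$) must be transported by $\varphi$, and one shows the action of $x_r$ on the summand $\tau_w\otimes v_w$ either stays within it or moves $w$ up in the Bruhat order; tracking degrees via Corollary~\ref{LGrDimDe} and using that $x_r,x_{r+1}$ act injectively on $\Delta(\mu)$ (Hypothesis~\ref{Hypothesis} applied to each cuspidal factor, plus exactness/freeness of induction) gives that the $k[x_r,x_{r+1}]$-submodule generated by $v$ is free of infinite rank. But $\varphi$ intertwines this with the $k[x_r,x_{r+1}]$-module generated by $u$ in $\Delta(\lambda)$, on which Lemma~\ref{f kills} imposes a non-trivial polynomial relation since $\lambda_a$ has height $\ge 2$ — contradiction. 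Hence $v=0$, so $\varphi=0$.

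The main obstacle I expect is making the ``$x_r$ and $x_{r+1}$ act independently / generate a free module on the target'' step precise: it requires controlling the action of the $x_j$'s on an induced module $\Delta(\mu_1^{\cdots})\circ\cdots$ in terms of the $x_j$'s on the individual cuspidal factors, which is where Hypothesis~\ref{Hypothesis} is genuinely needed and where one must be careful that the two positions $w^{-1}(r),w^{-1}(r+1)$ really do sit in distinct factors for \emph{every} $w$ appearing in the expansion of $v$ — this uniformity over $w$ is exactly what Lemma~\ref{equivalence lemma} is designed to deliver, but splicing it together with the module-theoretic freeness argument, and handling the grading bookkeeping so that ``infinite rank on one side, finite-type relation on the other'' becomes a clean contradiction, is the delicate part. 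The simply-laced elementary argument promised later presumably replaces Hypothesis~\ref{Hypothesis} by an explicit computation in that case.
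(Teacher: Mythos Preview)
Your overall strategy matches the paper's: reduce to $\lambda<\mu$, use Lemma~\ref{f kills} on the $\lambda$-side to produce a relation $f(x_r,x_{r+1})u=0$ on a generator $u\in\Delta(\lambda)$ (hence on $v=\varphi(u)$), expand $v=\sum_{w\in S^\mu}\tau_w\otimes v_w$ in $\Delta(\mu)$, and invoke Hypothesis~\ref{Hypothesis} on the $\mu$-side to derive a contradiction. The ingredients are right, but there is a genuine gap in how you splice them together.

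The problem is your reading of Lemma~\ref{equivalence lemma}. You assert that it delivers a \emph{single} index $r$ with $r\sim_\lambda r+1$ and $w^{-1}(r)\not\sim_\mu w^{-1}(r+1)$ \emph{uniformly over all $w$} appearing in the expansion of $v$. It does not: the lemma produces, for each fixed $w$, some $r=r(w)$ that may vary with $w$. For a fixed $r$ there can easily be terms $\tau_w\otimes v_w$ with $w^{-1}(r)\sim_\mu w^{-1}(r+1)$, and on those terms Lemma~\ref{f kills} applied on the $\mu$-side destroys any hope of freeness. So your claim that the $k[x_r,x_{r+1}]$-submodule generated by $v$ in $\Delta(\mu)$ is free does not follow, and this is precisely the ``delicate part'' you flagged but did not resolve.

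The paper circumvents this with a Bruhat leading-term argument rather than uniformity. Pick $u\in S^\mu$ of \emph{maximal length} among those with $v_u\neq 0$, and apply Lemma~\ref{equivalence lemma} to this one $u$ to obtain $r$; only now choose $f$ with $f(x_r,x_{r+1})u=0$ in $\Delta(\lambda)$. Using the commutation rule $x_t\tau_w=\tau_w x_{w^{-1}(t)}+(\text{Bruhat-lower terms})$, the $\tau_u$-component of $f(x_r,x_{r+1})v$ is exactly $f(x_{u^{-1}(r)},x_{u^{-1}(r+1)})v_u$: contributions from other $\tau_w\otimes v_w$ and from the error terms all land on basis elements $\tau_{w'}$ with $w'\neq u$, by maximality of $u$. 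Since $u^{-1}(r)$ and $u^{-1}(r+1)$ sit in distinct cuspidal factors of $\Delta(\mu)$, Hypothesis~\ref{Hypothesis} (applied to each factor separately, so that $k[x_{u^{-1}(r)},x_{u^{-1}(r+1)}]$ acts freely on the tensor product) gives $f(x_{u^{-1}(r)},x_{u^{-1}(r+1)})v_u\neq 0$, contradicting $f(x_r,x_{r+1})v=0$. The missing idea is thus not uniformity over $w$ but isolating a single leading term via the length filtration.
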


\begin{proof}
Let $n=\height(\al)$ and write $\lambda = (\lambda_1 \geq \cdots \geq \lambda_l)$ and $\mu = (\mu_1 \geq \cdots \geq \mu_m)$. 
It suffices to prove that 
$$\textup{Hom}_{H_\alpha}(\Delta(\lambda_1) \circ \cdots \circ \Delta(\lambda_l), \Delta(\mu_1) \circ \cdots \circ \Delta(\mu_m)) = 0.$$  
If not, let $\phi$ be a nonzero homomorphism.  By Theorem~\ref{TStand}(ii), we may assume that $\lambda < \mu$.  
%Note that for any $w \in S_n$, there exist some $1\leq r <n$  such  that $r$ and $r+1$ are $\lambda$-equivalent but $w^{-1}(r)$ and $w^{-1}(r+1)$ are not $\mu$-equivalent. Indeed, otherwise $\lambda$ would be a refinement of $\mu$, implying $\lambda<\mu$ by convexity.
Using Lemma~\ref{f kills}, pick a generator $v \in \Delta(\lambda_1) \circ \cdots \circ \Delta(\lambda_l)$ such that $v=1_\la v$ and for any $r\sim_\la r+1$, there is a non-zero polynomail $f\in k[x,y]$ with $f(x_r, x_{r+1})v=0$.  Then $f(x_r, x_{r+1})\phi(v) = 0$  as well.

Denote by $S^\mu$ the set of shortest length coset representatives for $S_n/S_\mu.$   Then, we can write $\phi(v) = \sum_{w \in S^\mu} \tau_{w} \otimes v_{w}$ for some $v_w \in \Delta(\mu_1) \otimes \cdots \otimes \Delta(\mu_{m})$.  
Since $\phi(v) = 1_\la \phi(v)$ and $1_\mu v_w=v_w$, we have that $1_\la \tau_w 1_\mu \neq 0$ whenever $v_w\neq 0$. 
In particular, if  $u \in S^\mu$ is an element of maximal length such that $v_u \neq 0$, then by Lemma~\ref{equivalence lemma}, 
$r\sim_\la r+1$ and  $u^{-1}(r)\not\sim_\mu u^{-1}(r+1)$ for some $1\leq r<n$.  

Now, we have:
\begin{align*}
f(x_r,x_{r+1}) \phi(v) &= f(x_r,x_{r+1}) \sum_{w \in S^\mu} \tau_w \otimes v_w \\
&= f(x_r,x_{r+1}) \tau_u \otimes v_u + \sum_{w \neq u} f(x_r,x_{r+1}) \tau_w \otimes v_ w \\
&=\tau_u \otimes f(x_{u^{-1}(r)},x_{u^{-1}(r+1)}) v_u + 
(*),
%\sum_{w \neq u} \tau_w \otimes h_w v_w,
\end{align*}
where $(*)$ is a sum of elements of the form $\tau_w\otimes v_w'$ with $v_w'\in \Delta(\mu_1) \otimes \cdots \otimes \Delta(\mu_{m})$ and $w\in S^\mu\setminus\{u\}$. 
 The last equality follows because in $H_\al$ for all $1 \leq t \leq n$ and $w\in S_n$, we have that 
$x_t \tau_w = \tau_w x_{w^{-1}(t)} + (**),$
where $(**)$ is a linear combination of elements of the form $\tau_y $ with  $y \in S_n$ being Bruhat smaller than $w$.

Since $u^{-1}(r)\not\sim_\mu u^{-1}(r+1)$,  
there are distinct integers $a,b \in \{1, \dots, m\}$ and integers $1\leq c\leq \height(\mu_a)$ and $1\leq d\leq \height(\mu_b)$ such that for any pure tensor $v= v^1 \otimes \cdots \otimes v^m \in \Delta(\mu_1) \otimes \cdots \otimes \Delta(\mu_m)$, and $s, t \in \mathbb{Z}_{\geq 0}$, we have 
$$x_{u^{-1}(r)}^{s}x_{u^{-1}(r+1)}^{t} v = v^1 \otimes \cdots \otimes x_{c}^{s}v^{a} \otimes \cdots \otimes x_{d}^{t} v^{b} \otimes \cdots \otimes v^m.$$
By Hypothesis~\ref{Hypothesis}, $f(x_{u^{-1}(r)},x_{u^{-1}(r+1)}) v_u\neq 0$. Hence $f(x_r,x_{r+1}) \phi(v) \neq 0$ giving a contradiction. 
\end{proof}

\subsection{Proof of the Hypothesis using Kashiwara-Park Lemma}\label{SSKP}
We begin with a key lemma which follows immediately from the results of \cite{KP}:

\begin{Lemma} \label{LKP} %{\rm \cite{}}%{\bf ()}
Let $\al\in R^+$, $n=\height(\al)$ and $i\in I$. Define 
$${\mathfrak p}_{i,\al}:=\sum_{\bi\in I^\al}\bigg(\prod_{r\in[1,n], i_r=i}x_r\bigg)1_\bi.
$$ 
Then ${\mathfrak p}_{i,\al}\De(\al)\neq 0$. 
\end{Lemma}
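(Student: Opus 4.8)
The plan is to deduce Lemma~\ref{LKP} from the description of cuspidal standard modules given by Kashiwara--Park in \cite{KP}. Recall that $\De(\al)$ is an infinite-dimensional module whose unique (up to shift) composition factor is the cuspidal $L(\al)$, with multiplicity $1/(1-q_\al^2)$ by Theorem~\ref{TDelta}(i), and that the central operator $x_\al\in\End_{H_\al}(\De(\al))$ is injective with $x_\al\De(\al)\cong q_\al^2\De(\al)$ (Lemma~\ref{submodule lemma}). The key point I would extract from \cite{KP} is that the element ${\mathfrak p}_{i,\al}=\sum_\bi\big(\prod_{r:\,i_r=i}x_r\big)1_\bi$ acts on $\De(\al)$ --- after an appropriate scaling and identification --- precisely as a power of $x_\al$, or at least as a {\em nonzero} multiple of such a power. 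Indeed, Kashiwara and Park construct cuspidal modules and their ``R-matrix'' type endomorphisms very explicitly, and the product $\prod_{r:\,i_r=i}x_r$ (summed over the weight idempotents) is a natural homogeneous central-looking element; the content of the lemma is that its image in the quotient $\End_{H_\al}(\De(\al))\cong k[x_\al]$ is not zero.

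Concretely, the steps I would carry out are: (1) quote the relevant structural result of \cite{KP} identifying $\De(\al)$ with their model of the cuspidal standard module, and identify the action of $x_1,\dots,x_n$ on it; (2) observe that $\prod_{r:\,i_r=i}x_r$ restricted to the single weight space $1_\bi\De(\al)$ that actually occurs coincides with an explicit operator whose effect is computed in \cite{KP}; (3) conclude that ${\mathfrak p}_{i,\al}$ acts as $c\,x_\al^{e}$ for some scalar $c\neq 0$ and exponent $e\geq 0$ (where $e$ counts the multiplicity of $i$ in a/any word of $\De(\al)$), and hence is injective --- in particular ${\mathfrak p}_{i,\al}\De(\al)\neq 0$. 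Alternatively, and perhaps more cleanly if \cite{KP} phrases things representation-theoretically: take any nonzero $v\in 1_\bi\De(\al)$ of lowest degree; since $L(\al)$ is cuspidal it is ``full'' in the sense that each $x_r$ with $i_r=i$ acts nontrivially and compatibly, and one tracks that $\big(\prod_{r:\,i_r=i}x_r\big)v$ is again nonzero by comparing with the corresponding statement for the finite-dimensional $L(\al)$ via the surjection $\De(\al)\onto L(\al)$ and the short exact sequence in Theorem~\ref{TDelta}(iv).

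The main obstacle, as I see it, is matching conventions: \cite{KP} works in their own normalization of cuspidal modules (possibly with a shift, a different sign convention for $\eps_{i,j}$, or a dual/opposite module), so the bulk of the work is a careful bookkeeping argument showing that the element they prove acts nontrivially is exactly (a scalar multiple of) our ${\mathfrak p}_{i,\al}$, and that passing between left and right module structures via the anti-involution $\iota$ does not disturb the non-vanishing. Once that dictionary is in place, the non-vanishing itself should be immediate from their explicit formulas. I would therefore structure the proof as a short reduction: state the precise result of \cite{KP} being invoked, spell out the identification of operators, and then conclude in one line that ${\mathfrak p}_{i,\al}$ acts injectively on $\De(\al)$, so ${\mathfrak p}_{i,\al}\De(\al)\neq 0$.
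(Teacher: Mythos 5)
Your plan to deduce the lemma directly from Kashiwara--Park is exactly what the paper does: its proof consists of two citations, to Definition 2.2(b) and Proposition 3.5 of \cite{KP}. The element ${\mathfrak p}_{i,\al}$ is precisely the operator appearing in Kashiwara--Park's Definition 2.2(b) of an affinization, where nontriviality (indeed injectivity) of its action is built into the definition, and Proposition 3.5 identifies $\De(\al)$ as an affinization of $L(\al)$ --- so once you have matched the element to theirs you are done, without needing your step (3) identifying it as a nonzero multiple of a power of $x_\al$.
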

\begin{proof}
This follows from \cite[Definition 2.2(b)]{KP} and \cite[Proposition 3.5]{KP}.
\end{proof}

\begin{Theorem}
Let $\alpha \in R^+$ have height $n$.  Then, $x_r^m   v \neq 0$ for all $1 \leq r \leq n$,  $m \in \mathbb{Z}_{\geq 0}$, and nonzero  $v \in \Delta(\alpha)$.  In particular, Hypothesis~\ref{Hypothesis} holds.
\end{Theorem}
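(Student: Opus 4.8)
The plan is to prove the first statement — that $x_r$ acts injectively on $\Delta(\alpha)$, or rather that no power of $x_r$ kills any nonzero vector — and then invoke Lemma~\ref{free y module} for the ``In particular'' clause. The strategy is to reduce an arbitrary $x_r$ to the special combination ${\mathfrak p}_{i,\al}$ appearing in Lemma~\ref{LKP}, and then bootstrap from the single non-vanishing statement there to non-vanishing on all of $\Delta(\alpha)$ using the structure of $\Delta(\alpha)$ as an object of $\catC_\al$ (Theorem~\ref{TDelta}).

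First I would observe that it suffices to treat a single generator. Indeed, since $x_r$ commutes with the central subalgebra $\Lambda(\al)$ and with $x_\al\in\End_{H_\al}(\De(\al))$, and since by Lemma~\ref{submodule lemma} every nonzero submodule of $\Delta(\alpha)$ is $x_\al^s\Delta(\alpha)\cong q_\al^{2s}\Delta(\alpha)$, the kernel of the action of $x_r^m$ on $\Delta(\alpha)$ is an $H_\al$-submodule; if it were nonzero it would be all of some $x_\al^s\Delta(\alpha)$, hence (applying $x_\al^{-s}$, which makes sense on the isomorphic copy) $x_r^m$ would annihilate a whole copy of $\Delta(\alpha)$, and in particular $x_r$ itself would annihilate $x_\al^N\Delta(\alpha)$ for $N$ large. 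So it is enough to show $x_r$ acts injectively, i.e. that $x_r\cdot v\neq 0$ for every nonzero $v\in\Delta(\alpha)$. Fix the weight $\bi\in\words^\al$ of $v$ and let $i:=i_r$. Now ${\mathfrak p}_{i,\al}1_\bi=\big(\prod_{t:\,i_t=i}x_t\big)1_\bi$ is, up to reordering of commuting factors, $x_r\cdot\big(\prod_{t\neq r,\,i_t=i}x_t\big)1_\bi$; so if $x_r$ had a nonzero kernel on the $\bi$-weight space, then composing, $\mathfrak p_{i,\al}$ would kill $\big(\prod_{t\neq r,\,i_t=i}x_t\big)^{-1}$-preimages — but this last point needs care, because the other $x_t$ need not be injective a priori.

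The cleaner route, which I expect to be the actual argument, is to argue by contradiction at the level of submodules: suppose $x_r$ is not injective on $\Delta(\alpha)$. As above $\Ker(x_r|_{\Delta(\alpha)})$ is a nonzero submodule, hence contains (a shift of) $\Delta(\alpha)$, and in fact since $x_r$ commutes with everything in $H_\al$ acting on the right, $\Ker(x_r)$ is stable under $\End_{H_\al}(\Delta(\alpha))=k[x_\al]$, so $x_r$ annihilates $x_\al^s\Delta(\alpha)$ for some $s$, and then $x_r$ annihilates $x_\al^{s'}\Delta(\alpha)$ for all $s'\ge s$. Pick such an $s$ and let $W:=x_\al^s\Delta(\alpha)\cong q_\al^{2s}\Delta(\alpha)$. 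Then $x_r$ — and hence $\prod_{t:\,i_t=i}x_t$ — annihilates $W$. But $W\cong \Delta(\alpha)$ as $H_\al$-modules (up to grading shift), so $\mathfrak p_{i,\al}\Delta(\alpha)=0$ after the shift, contradicting Lemma~\ref{LKP}. Here I need that $\mathfrak p_{i,\al}$, being a sum of products of the $x_t1_\bi$, is killed once one of its factors $x_r1_\bi$ is killed on the relevant weight space — which is immediate since the $x_t$ commute and $\mathfrak p_{i,\al}1_\bi$ literally has $x_r1_\bi$ as a factor for $\bi$ with $i_r=i$, while $\mathfrak p_{i,\al}1_\bi$ for $\bi$ with $i_r\neq i$ is irrelevant (has no $x_r$) — wait, that is exactly the subtlety: for weights $\bi$ with $i_r\ne i$, nothing forces $\mathfrak p_{i,\al}1_\bi$ to vanish. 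So the contradiction must instead come from: $x_r$ kills $W\cong\Delta(\alpha)$, and more generally for \emph{every} $s'\in\{1,\dots,n\}$ the same argument (kernel of $x_{s'}$ is a submodule) would let us choose a single copy $W'\cong\Delta(\alpha)$ on which \emph{all} $x_1,\dots,x_n$ vanish simultaneously — but then $\mathfrak p_{j,\al}$ vanishes on $W'$ for \emph{every} $j\in I$, contradicting Lemma~\ref{LKP}. To get the simultaneous vanishing one intersects the finitely many submodules $\Ker(x_1),\dots,\Ker(x_n)$; their intersection, being an intersection of nonzero submodules each equal to some $x_\al^{s_j}\Delta(\alpha)$, is $x_\al^{\max s_j}\Delta(\alpha)\neq 0$.

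Thus the key steps in order: (1) reduce to injectivity of each $x_r$ via the submodule/Lemma~\ref{submodule lemma} argument (kernels of $x_r^m$ are $x_\al$-stable submodules, hence shifts of $\Delta(\alpha)$); (2) observe that if some $x_r$ is non-injective then, intersecting the kernels over all $r$, \emph{all} of $x_1,\dots,x_n$ annihilate a common nonzero submodule $W$ isomorphic to a grading shift of $\Delta(\alpha)$; (3) conclude $\mathfrak p_{i,\al}W=0$ for all $i$, contradicting Lemma~\ref{LKP} applied to $\Delta(\alpha)\cong q^{-d}W$; (4) deduce Hypothesis~\ref{Hypothesis} from Lemma~\ref{free y module}. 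The main obstacle is Step~(1)–(2): making precise that the kernel of $x_r$ (resp.\ its intersection over all $r$) is genuinely a nonzero $H_\al$-submodule stable under right multiplication by $\End_{H_\al}(\Delta(\alpha))$, so that Lemma~\ref{submodule lemma}'s classification of submodules forces it to be a full shifted copy of $\Delta(\alpha)$; once that is in hand, the contradiction with Kashiwara–Park is immediate.
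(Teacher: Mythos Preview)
There is a genuine gap. Your Step~(1) hinges on the claim that $\Ker(x_r^m\!\mid_{\Delta(\alpha)})$ is an $H_\al$-submodule, so that Lemma~\ref{submodule lemma} forces it to be some $x_\al^s\Delta(\alpha)$. But left multiplication by $x_r$ is \emph{not} an $H_\al$-module endomorphism of $\Delta(\alpha)$: the element $x_r$ fails to commute with $\tau_{r-1}$ and $\tau_r$ (the relation $(x_t\tau_r-\tau_r x_{s_r(t)})1_\bi=\de_{i_r,i_{r+1}}(\de_{t,r+1}-\de_{t,r})1_\bi$ shows exactly this). The kernel is stable under $k[x_1,\dots,x_n]$ and under right action of $\End_{H_\al}(\Delta(\alpha))=k[x_\al]$, but that is far weaker than being an $H_\al$-submodule, and Lemma~\ref{submodule lemma} classifies $H_\al$-submodules. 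A second, independent gap: even granting Step~(1), your Step~(2) intersects $\Ker(x_1),\dots,\Ker(x_n)$ and assumes each is nonzero, but you have only assumed that \emph{one} of them is; nothing in your argument propagates non-injectivity from a single $x_r$ to all of them.

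The paper's proof sidesteps both problems by applying the endomorphism-injectivity machinery not to $x_r$ but to $\mathfrak p_{i,\al}$ itself. The point is that $\mathfrak p_{i,\al}$ is central (Theorem~\ref{TCenter}), hence multiplication by it \emph{is} an $H_\al$-endomorphism of $\Delta(\alpha)$. Lemma~\ref{LKP} says this endomorphism is nonzero; Theorem~\ref{TInjGeneral} (or already Lemma~\ref{submodule lemma}) then says it is injective, so $\mathfrak p_{i,\al}^m$ is injective too. Now for a weight vector $v$ of weight $\bi$ with $i:=i_r$, one has $\mathfrak p_{i,\al}^m v=\big(\prod_{t:\,i_t=i}x_t\big)^m v=h\,x_r^m v$ for some $h\in H_\al$, since the $x_t$'s commute and $x_r$ is one of the factors. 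Hence $x_r^m v\neq 0$. The missing idea in your approach is to pass to the central element \emph{before} invoking injectivity of endomorphisms, and then factor out $x_r^m$ afterwards, rather than trying to force submodule structure on $\Ker(x_r)$ directly.
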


\begin{proof} The `in particular' statement follows from Lemma~\ref{free y module}. 

We may assume that $v$ is a weight vector of some weight $\bi$. Let $i=i_r$. The element ${\mathfrak p}_{i,\al}$ defined in Lemma~\ref{LKP}  is central by Theorem~\ref{TCenter}. By Lemma~\ref{LKP} and Theorem~\ref{TInjGeneral}, the multiplication with ${\mathfrak p}_{i,\al}$ on $\De(\al)$ is injective, so multiplication with ${\mathfrak p}_{i,\al}^m$ is also injective. But ${\mathfrak p}_{i,\al}^m$ involves $x_r1_\bi$, so
 $0\neq {\mathfrak p}_{i,\al}^m v=hx_r^mv$ for some $h\in H_\al$, and the theorem follows. 
\end{proof}

\subsection{Elementary proof of the Hypothesis for simply laced types}
\label{SSL}
Throughout this subsection, we assume that the root system $R$ is of  (finite) $ADE$ type.  Let $\alpha = a_1\alpha_1+\cdots +a_l \alpha_l \in Q^+$ and $n=\height(\al)$.  Pick a permutation $(i_1, \dots, i_l)$ of $(1, \dots, l)$ with $a_{i_1}>0$, and define $\bi := i_1^{a_{i_1}} \cdots i_l^{a_{i_l}} \in I^\alpha$.  Then the stabalizer of $\bi$ in $S_n$ is the standard parabolic subgroup $S_{\bi} := S_{a_{i_1}} \times \cdots \times S _{a_{i_l}}$.  Let $S^{\bi}$ be a set of coset representatives for $S_n/S_{\bi}$.  Then by Theorem~\ref{TCenter}, the element
\begin{equation} \label{central element}
z=z_{\bi} := \sum_{w \in S^{\bi}} (x_{w(1)} + \cdots +x_{w(a_{i_1})})1_{w \cdot \bi}
\end{equation}
is central of degree $2$ in $H_\alpha$.  For any $1 \leq r \leq n$, note that 
\begin{equation} \label{solve yz}
a_{i_1} x_r = z - \sum_{w \in S^{\bi}} ((x_{w(1)}-x_r)+\cdots+(x_{w(a_{i_1})}-x_r))1_{w \cdot \bi}.
\end{equation}
%Indeed, the right hand side equals $$\sum_{w \in S^{\bi}} (a_{i_1} x_s) e(w \cdot \bi) = (a_{i_1}x_s) \sum_{w \in S^{\bi}} 1_{\bi} = a_{i_1} x_s.$$

Let $H_{\alpha}'$ be the subalgebra of $H_\alpha$ generated by
\begin{equation*}
\{1_{\bi} \mid  \bi \in I^\alpha \} \cup \{\tau_r \mid  1 \leq r < n \} \cup \{ x_r-x_{r+1} \mid  1 \leq r < n \}.
\end{equation*}

For the reader's convenience, we reprove a lemma from \cite[Lemma 3.1]{BKOld}:

\begin{Lemma} \label{LH'}%{\rm \cite{}}%{\bf ()}
Let $\al$, $\bi$, and $z$ be as above. Then:
\begin{enumerate}
\item[{\rm (i)}] $\{(x_1-x_2)^{m_1} \cdots (x_{n-1}-x_{n})^{m_{n-1}} \tau_w 1_{\bi} \mid  m_r \in \mathbb{Z}_{\geq 0}, w \in S_n, \bi\in I^\alpha\}
$ 
is a basis for $H_{\alpha}'$.
\item[{\rm (ii)}] If $a_{i_1}\cdot 1_k\neq 0$ in $k$, then 
there is an algebra isomorphism
\begin{equation} \label{tensor algebra iso}
H_\alpha \cong H_\alpha' \otimes k[z].
\end{equation}
\end{enumerate}
\end{Lemma}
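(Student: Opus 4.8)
The plan is to prove part (i) first, since part (ii) will follow from it by a direct computation. For (i), I would start from the Basis Theorem (Theorem~\ref{TBasis}), which says $\{x_1^{a_1}\cdots x_n^{a_n}\tau_w 1_\bi\}$ is a $k$-basis for $H_\al$. The key observation is a change of variables: the monomials $x_1^{a_1}\cdots x_n^{a_n}$ and the ``shifted'' monomials in $x_1-x_2,\dots,x_{n-1}-x_n$ together with $x_n$ (or equivalently, the elementary symmetric-type generators) span the same space over each idempotent $1_\bi$. More precisely, $k[x_1,\dots,x_n]=k[x_1-x_2,\dots,x_{n-1}-x_n]\otimes k[x_n]$, so $\{(x_1-x_2)^{m_1}\cdots(x_{n-1}-x_n)^{m_{n-1}}x_n^{c}\}$ is another $k$-basis of the polynomial algebra. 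Substituting this into the Basis Theorem shows that $\{(x_1-x_2)^{m_1}\cdots(x_{n-1}-x_n)^{m_{n-1}}x_n^{c}\,\tau_w 1_\bi\}$ is a $k$-basis of $H_\al$, and hence the subset with $c=0$ is linearly independent; it clearly lies in $H_\al'$, so it remains to check it spans $H_\al'$. For spanning, I would argue that the span of this set is closed under left multiplication by each of the generators $\tau_r$, $1_\bi$, and $x_r-x_{r+1}$ of $H_\al'$: the relations of the KLR algebra (the commutation of $x$'s among themselves, the relation moving $\tau_r$ past $x_t$, the $\tau_r^2$ relation, and the braid relations) all have right-hand sides expressible in terms of the $x_s-x_{s+1}$ and $\tau$'s without introducing a ``pure'' $x_n$; one checks this generator by generator, using that $\eps_{i_r,i_{r+1}}(x_r^{-c_{i_r,i_{r+1}}}-x_{r+1}^{-c_{i_{r+1},i_r}})$ lies in the ideal generated by $x_r-x_{r+1}$ when $i_r=i_{r+1}$ (so $c_{i_r,i_{r+1}}=2$ in the $ADE$ case, giving $x_r^2-x_{r+1}^2=(x_r-x_{r+1})(x_r+x_{r+1})$, and $x_r+x_{r+1}=2x_{r+1}+(x_r-x_{r+1})$; for the truly mixed case one is in the $\eps(x_r-x_{r+1})$ situation already), and similarly that the Serre-type right-hand side $\sum \eps\, x_r^p x_{r+2}^s 1_\bi$ with $i_r=i_{r+2}$ can be rewritten modulo differences of adjacent $x$'s. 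This establishes (i).

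For (ii), assuming $a_{i_1}\cdot 1_k\neq 0$, I would use $(\ref{solve yz})$: it exhibits $x_r$ as $a_{i_1}^{-1}z$ plus an element of $H_\al'$ (each summand $x_{w(j)}-x_r$ is a telescoping sum of adjacent differences $x_t-x_{t+1}$, hence lies in $H_\al'$). Thus $H_\al$ is generated by $H_\al'$ and the single central element $z$. Define a map $H_\al'\otimes k[z]\to H_\al$ by $a\otimes z^j\mapsto a z^j$; this is an algebra homomorphism because $z$ is central (Theorem~\ref{TCenter}) and surjective by the previous sentence. For injectivity, compare bases: by (i) and the fact that $z$ acts as $x_n$ modulo $H_\al'$ on each weight space $1_{w\cdot\bi}$ (more precisely, $z\,1_{w\cdot\bi}=(x_{w(1)}+\dots+x_{w(a_{i_1})})1_{w\cdot\bi}$, and modulo the span of adjacent differences this is $a_{i_1}x_{w(?)}$), the products $(x_1-x_2)^{m_1}\cdots(x_{n-1}-x_n)^{m_{n-1}}\tau_w 1_\bi\cdot z^c$ span $H_\al$, and there are exactly as many of them in each graded piece as the Basis Theorem monomials, so they form a basis; this is precisely the image of a $k$-basis of $H_\al'\otimes k[z]$, forcing the map to be an isomorphism.

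The main obstacle I anticipate is the spanning step in part (i): verifying that the $k$-span of the proposed basis is closed under left multiplication by $x_r-x_{r+1}$ and by $\tau_r$ requires rewriting every right-hand side appearing in the KLR relations so that no ``new'' power of $x_n$ (or more precisely, no symmetric-direction polynomial generator beyond what is already present) is created. The $x\tau$-commutation relation and the $\tau^2$ relation are the delicate ones: one must express, e.g., $x_t\tau_r 1_\bi$ in the spanning set, which forces tracking how $\tau_r x_{s_r(t)}$ and the correction term $\de_{i_r,i_{r+1}}(\de_{t,r+1}-\de_{t,r})1_\bi$ decompose, and then iterating. A clean way to organize this is to note that $H_\al'$ is visibly a subalgebra (it is defined as the subalgebra generated by those elements), so one only needs linear independence plus the count, i.e. one only needs that the proposed set spans $H_\al'$ — and for that it suffices to show every generator-monomial of $H_\al'$ reduces, via the defining relations, to a $k$-combination of the proposed basis elements, which is a finite induction on word length in the generators $\tau_r, x_s-x_{s+1}, 1_\bi$. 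I would present this induction carefully rather than the ad hoc rewriting, as it is the technical heart of the lemma.
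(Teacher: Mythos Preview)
Your approach is essentially the same as the paper's: for (i) linear independence comes from the Basis Theorem and spanning from checking that the span of the proposed monomials is closed under multiplication via the defining relations, and for (ii) surjectivity of the multiplication map comes from (\ref{solve yz}) and injectivity from a graded dimension count. One small correction to your case analysis: in the $\tau_r^2$ discussion you conflate $i_r=i_{r+1}$ (where $\tau_r^2 1_\bi=0$) with $c_{i_r,i_{r+1}}<0$; in $ADE$ the latter forces $c_{i_r,i_{r+1}}=-1$, so the polynomial is simply $\eps_{i_r,i_{r+1}}(x_r-x_{r+1})\in H_\alpha'$, and no $x_r^2-x_{r+1}^2$ ever appears.
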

\begin{proof}
In view of the basis (\ref{EBasis}), part (i) follows on checking that
the span of the given monomials is closed under multiplication, which follows from the defining relations.
For (ii), note using (\ref{solve yz}) that the natural multiplication map
$k[z] \otimes H_\alpha' \rightarrow H_\alpha$ is surjective.
It remains to observe 
that 
the two algebras have the same graded dimension.
\end{proof}

Let $\alpha$ now be a positive root. Then one can always find an  index $i_1$ with $a_{i_1} \cdot 1_k \neq 0$, so in this case we always have \eqref{tensor algebra iso} for an appropriate choice of $\bi$. We always assume that this choice has been made. 
Following \cite{BKOld}, we can now present another useful description of the cuspidal standard module $\De(\al)$. Denote by $L'(\alpha)$ the restriction of the cuspidal irreducible module $L(\alpha)$ from $H_\alpha$ to $H_\alpha'$.

\begin{Lemma} \label{LNewDelta}%{\rm \cite{}}%{\bf ()}
Let $\al\in R^+$. 
\begin{enumerate}
\item[{\rm (i)}] $L'(\al)$ is an irreducible $H_\al'$-module. 
\item[{\rm (ii)}] $\Delta(\alpha) \cong H_\alpha \otimes_{H_{\alpha}'} L'({\alpha})$. 
\item[{\rm (iii)}] The element $z$ acts on $\Delta(\alpha)$ freely.
\end{enumerate}
\end{Lemma}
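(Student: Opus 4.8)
The three parts are closely linked, and the plan is to deduce each from structural facts already recorded: the tensor decomposition $H_\al\cong H_\al'\otimes k[z]$ of Lemma~\ref{LH'}(ii), the cuspidal theory of $L(\al)$, and the characterization of $\De(\al)$ via Theorem~\ref{TDelta}. For (i), I would argue that $L'(\al)$ is irreducible over $H_\al'$ by a central-character argument: $z$ is central of degree $2$ in $H_\al$, hence acts on the graded module $L(\al)$ in a way compatible with the grading; since $L(\al)$ is finite-dimensional, $z$ must act nilpotently, and in fact (being homogeneous of positive degree on a finite-dimensional graded module whose bottom degree is preserved) $z$ acts as $0$ on $L(\al)$. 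Therefore the $H_\al$-action on $L(\al)$ factors through $H_\al/(z)\cong H_\al'$, and any $H_\al'$-submodule of $L'(\al)$ is automatically an $H_\al$-submodule, forcing irreducibility.

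For (ii), consider the natural map $H_\al\otimes_{H_\al'}L'(\al)\to\De(\al)$ induced by any nonzero $H_\al'$-map $L'(\al)\to\De(\al)$ (which exists because $\De(\al)$ has head $L(\al)$, and restricting a surjection $\De(\al)\onto L(\al)$ and picking a preimage gives such a map, or dually one uses that $\De(\al)$ is a cover of $L(\al)$). Using $H_\al\cong H_\al'\otimes k[z]$ as a left $H_\al'$-module, the induced module $H_\al\otimes_{H_\al'}L'(\al)\cong k[z]\otimes_k L'(\al)$ has graded dimension $\tfrac{1}{1-q^2}\DIM L'(\al)=\tfrac{1}{1-q^2}\DIM L(\al)$, which by Corollary~\ref{LGrDimDe} (or Theorem~\ref{TDelta}(i)) equals $\DIM\De(\al)$. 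So it suffices to show the map is surjective, equivalently that its image is all of $\De(\al)$; since $\De(\al)$ has simple head $L(\al)$, any submodule either is everything or lies in $\rad\De(\al)$, and the image contains a lift of $L(\al)$, hence is not contained in the radical. Thus the map is a surjection between modules of equal (finite-in-each-degree) graded dimension, hence an isomorphism.

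For (iii), freeness of the $z$-action on $\De(\al)$ is now immediate from (ii): under the identification $\De(\al)\cong k[z]\otimes_k L'(\al)$ as a $k[z]$-module (via the left tensor factor of $H_\al\cong H_\al'\otimes k[z]$ acting on $H_\al\otimes_{H_\al'}L'(\al)$), the module is visibly free over $k[z]=k[x]$. The main obstacle I anticipate is making the reduction ``$z$ acts as $0$ on $L(\al)$'' fully rigorous: one must rule out that $z$ acts by a nonzero nilpotent operator, which requires that $L(\al)$ be concentrated in a range of degrees on which a degree-$2$ nilpotent central element is forced to vanish — this is true because multiplication by $z$ raises degree by $2$ and $L(\al)$ has a top nonzero degree, so $z\cdot L(\al)$ lies strictly above, but $z$ also commutes with everything including lowering operators, so if $zv\neq 0$ for some top-degree-adjacent $v$ one propagates a contradiction with finite-dimensionality; alternatively one invokes that $L(\al)$ is $\circledast$-self-dual, so its socle meets the top degree and $z$ annihilates the socle. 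I would spell out whichever of these is cleanest, but the essential point — that $z$ kills any finite-dimensional graded $H_\al$-module — is the one delicate step; everything else is graded-dimension bookkeeping against Theorem~\ref{TDelta}(i).
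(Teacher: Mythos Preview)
Your approach to (i) and (iii) matches the paper's, and your worry about the ``delicate step'' in (i) is misplaced: since $z$ is central of positive degree, $z\cdot L(\al)$ is an $H_\al$-submodule that misses the lowest graded component of $L(\al)$, hence is proper, hence is zero by irreducibility. No appeal to $\circledast$-self-duality or nilpotency propagation is needed.

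There is, however, a genuine gap in your argument for (ii). You want an $H_\al'$-map $L'(\al)\to\De(\al)$, but ``restricting a surjection $\De(\al)\onto L(\al)$ and picking a preimage'' does not produce one: if $v_0$ generates $L'(\al)$ and $\tilde v_0\in\De(\al)$ lifts it, the assignment $h'v_0\mapsto h'\tilde v_0$ is well-defined only if the $H_\al'$-annihilator of $v_0$ kills $\tilde v_0$, which you have not shown (and which is essentially equivalent to the statement you are trying to prove). The paper sidesteps this entirely: from the tensor decomposition one sees that $H_\al\otimes_{H_\al'}L'(\al)$ has a filtration with subquotients $q_\al^{2d}L(\al)$, so it lies in the category $\catC_\al$; by Frobenius reciprocity and (i) it has simple head $L(\al)$, hence is indecomposable; then Theorem~\ref{TDelta}(ii) classifies indecomposables in $\catC_\al$, and the infinite-dimensional one with this head is $\De(\al)$. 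Your graded-dimension count is correct and could equally well finish the argument, but only after you build a map in the \emph{opposite} direction: since $\De(\al)$ is the projective cover of $L(\al)$ in $\catC_\al$ and the induced module lies in $\catC_\al$ and surjects onto $L(\al)$, projectivity gives a surjection $\De(\al)\to H_\al\otimes_{H_\al'}L'(\al)$, which is then an isomorphism by your dimension comparison.
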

\begin{proof}
Note that $z$ acts as zero on $L(\al)$, which implies (i) in view of (\ref{tensor algebra iso}). Moreover, it is now easy to see that $H_\alpha \otimes_{H_{\alpha}'} L'({\alpha})$ has a filtration with the subfactors isomorphic to $q^{2d}L(\al)$ for $d=0,1,\dots$. 
Furthermore, by Frobenius Reciprocity and (i), the module $H_\alpha \otimes_{H_{\alpha}'} L'({\alpha})$ has simple head $L(\al)$. Now (ii) follows from Theorem~\ref{TDelta}(ii). Finally, (iii) follows from (ii) and (\ref{tensor algebra iso}).
\end{proof}

Using the description of $\De(\al)$ from Lemma~\ref{LNewDelta}(ii), we can now establish  Hypothesis~\ref{Hypothesis}: 

\begin{Theorem} \label{y acts freely}
Let  $\al\in R^+$ and $\{v_1, \dots, v_N\}$ be a $k$-basis of $L'(\alpha)$. Then the $k[x_r]$-module $\Delta(\alpha)$ is free with basis $\{1 \otimes v_1, \dots, 1 \otimes v_N\}$. 
In particular, Hypothesis~\ref{Hypothesis} holds for simply laced types. 
\end{Theorem}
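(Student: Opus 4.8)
The plan is to use the isomorphism $H_\alpha \cong H_\alpha' \otimes k[z]$ from Lemma~\ref{LH'}(ii) together with the description $\Delta(\alpha) \cong H_\alpha \otimes_{H_\alpha'} L'(\alpha)$ from Lemma~\ref{LNewDelta}(ii). Under the tensor decomposition, $H_\alpha$ is free as a right $H_\alpha'$-module with basis the powers $1, z, z^2, \dots$, so $\Delta(\alpha) \cong H_\alpha \otimes_{H_\alpha'} L'(\alpha)$ is isomorphic as a $k$-space (indeed as a $k[z]$-module) to $k[z] \otimes_k L'(\alpha)$; concretely, $\{z^j \otimes v_i \mid j \geq 0,\ 1 \leq i \leq N\}$ is a $k$-basis of $\Delta(\alpha)$, where $\{v_1,\dots,v_N\}$ is a $k$-basis of $L'(\alpha)$. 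This is just a restatement of Lemma~\ref{LNewDelta}(iii).

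Next I would pass from the central variable $z$ to the variable $x_r$ using the identity~\eqref{solve yz}, which (after our standing assumption that $a_{i_1}\cdot 1_k \neq 0$) can be rewritten as
$$
z = a_{i_1} x_r + h_r, \qquad h_r := \sum_{w \in S^{\bi}} \big((x_{w(1)}-x_r)+\cdots+(x_{w(a_{i_1})}-x_r)\big)1_{w\cdot\bi} \in H_\alpha'.
$$
Thus, working inside $H_\alpha \cong H_\alpha' \otimes k[z]$, the subalgebra $k[x_r]$ and the subalgebra $k[z]$ are related by an $H_\alpha'$-linear change of generator: modulo $H_\alpha'$, the element $x_r$ agrees with $a_{i_1}^{-1} z$. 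More precisely, for any $j\geq 0$ one has $x_r^j \equiv a_{i_1}^{-j} z^j$ modulo the right ideal structure coming from lower powers of $z$ with $H_\alpha'$-coefficients; so the family $\{1\otimes v_i, (x_r)\otimes v_i, (x_r^2)\otimes v_i, \dots\}$ is obtained from the $k$-basis $\{z^j\otimes v_i\}$ by a "unitriangular" base change (unitriangular in the $z$-degree, with invertible leading scalar $a_{i_1}^{-j}$ and $H_\alpha'$-coefficient corrections of strictly lower $z$-degree acting on the $v_i$). Since each $z^j \otimes v_i$ is already a basis vector, this base change is invertible, and I conclude that $\{x_r^j \otimes v_i \mid j\geq 0,\ 1\leq i\leq N\}$ is again a $k$-basis of $\Delta(\alpha)$. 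Equivalently, $\Delta(\alpha)$ is free as a $k[x_r]$-module with basis $\{1\otimes v_1, \dots, 1\otimes v_N\}$.

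The main obstacle is making the "unitriangular base change" argument rigorous: one must check that the correction terms $h_r$ (and their iterates appearing when expanding $z^j = (a_{i_1}x_r + h_r)^j$) only move things to strictly lower powers of $z$ with $H_\alpha'$-coefficients acting on $L'(\alpha)$, so that no infinite regress occurs and the transition matrix between $\{z^j\otimes v_i\}$ and $\{x_r^j\otimes v_i\}$ is genuinely invertible over $k$. This is where the grading helps: $z$ and $x_r$ both have degree $2$, $h_r$ has degree $2$ but lies in $H_\alpha'$, and on each graded piece of $\Delta(\alpha)$ there are only finitely many basis vectors, so the transition map is block-triangular with respect to the (finite-dimensional) graded filtration and has invertible diagonal blocks. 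Once this bookkeeping is in place, freeness over $k[x_r]$ with the stated basis is immediate, and since $r$ was arbitrary, Hypothesis~\ref{Hypothesis} follows for simply laced types.
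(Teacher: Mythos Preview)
Your proposal is correct and follows essentially the same approach as the paper: write $x_r = a_{i_1}^{-1}z + (\text{element of }H_\alpha')$ via \eqref{solve yz}, then use that $z$ is central and $H_\alpha'$ acts within $L'(\alpha)$ to see that $x_r^j(1\otimes v_m) = a_{i_1}^{-j} z^j\otimes v_m + (\text{terms with lower $z$-power})$, giving a unitriangular change from the known $k$-basis $\{z^j\otimes v_i\}$ to $\{x_r^j(1\otimes v_i)\}$. The paper's version is simply terser; your extra remarks about finiteness of graded pieces are harmless but not actually needed, since the triangularity in the $z$-power already suffices.
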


\begin{proof}
 By \eqref{solve yz}, we can write
$ x_r = \frac{1}{a_i}z+(*),
$
 where $(*)$ is an element of $H_\alpha'$.  For each $1\leq m \leq N$, we have 
$$x_r^b ( 1 \otimes v_m) = \left(\frac{1}{a_k}\right)^b z^b \otimes v_m + (**),$$
where $(**)$ is a linear combination of terms of the form $z^c \otimes v_t$ with $c<b$. So $\{1 \otimes v_1, \dots, 1 \otimes v_N\}$ is a basis of the free $k[x_r]$-module $\Delta(\alpha)$. \end{proof}

The following strengthening of Lemma~\ref{f kills} is not needed for the proof of  Theorem A, but we include it for completeness.

\begin{Proposition} \label{Px-y}
Let $\alpha \in R^+$ and $n=\height(\al)$.  For any $1\leq r,s\leq n$, there is $d\in\Z_{>0}$ such that $(x_r-x_s)^d$ annihilates $\De(\al)$. 
\end{Proposition}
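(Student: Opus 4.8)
The plan is to show that the difference operator $x_r - x_s$ acts nilpotently on $\De(\al)$, which combined with the fact that it is built from operators coming from the subalgebra $H_\al'$ (where all $x_t - x_{t+1}$ live) will force a uniform bound. First I would reduce to the case that $s = r+1$: since $x_r - x_s = (x_r - x_{r+1}) + (x_{r+1} - x_{r+2}) + \cdots + (x_{s-1} - x_s)$ is a sum of commuting elements (the $x_t$'s all commute), if each $(x_t - x_{t+1})^{d_t}$ annihilates $\De(\al)$ then $(x_r - x_s)^{d}$ annihilates $\De(\al)$ for $d = \sum (d_t - 1) + 1$ by the multinomial theorem. So it suffices to treat $x_r - x_{r+1}$ for a fixed $r$.

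Next I would use the description $\De(\al) \cong H_\al \otimes_{H_\al'} L'(\al)$ from Lemma~\ref{LNewDelta}(ii) together with the tensor decomposition $H_\al \cong H_\al' \otimes k[z]$ of Lemma~\ref{LH'}(ii). Under these identifications, $\De(\al) \cong k[z] \otimes_k L'(\al)$ as a $k$-module, with $H_\al'$ acting only through the second factor after moving past the polynomial generator $z$ (more precisely, $\De(\al)$ has the $k[z]$-basis $\{1 \otimes v_m\}$ from Theorem~\ref{y acts freely}, and $H_\al'$ acts on each graded piece). The key point is that $x_r - x_{r+1}$ lies in $H_\al'$, and $H_\al'$ acts on the finite-dimensional space $L'(\al)$; I want to transfer this to an action on $\De(\al)$ that is "block upper triangular" with respect to the $z$-filtration.

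Concretely, the element $y := x_r - x_{r+1} \in H_\al'$ acts on $\De(\al) = H_\al' \otimes_k k[z] \otimes_k L'(\al)$... more carefully: $\De(\al)$ has underlying $k$-space $\bigoplus_{d \geq 0} z^d \otimes L'(\al)$, and since $y$ commutes with $z$ (both are in the commuting-generators picture — $z$ is central), $y$ preserves each "degree in $z$" filtration piece $F_{\leq d} := \bigoplus_{e \leq d} z^e \otimes L'(\al)$ — in fact $y$ acts on $z^d \otimes L'(\al) \cong L'(\al)$ exactly as it does on $L'(\al)$, because $z$ is central and $1 \otimes v_m$ generates. Wait — I need $y$ to act on $L'(\al)$, which it does since $L'(\al)$ is an $H_\al'$-module and $y \in H_\al'$. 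So on each graded component $z^d \otimes L'(\al)$ the operator $y$ is just a copy of its action on the $N$-dimensional module $L'(\al)$; hence $y^N$ annihilates $\De(\al)$ if $y$ acts nilpotently on $L'(\al)$, and in any case $y$ restricted to the finite-dimensional $L'(\al)$ has some characteristic polynomial. The subtlety: $y$ need not act nilpotently on $L'(\al)$ a priori, so I should instead argue that $x_r - x_{r+1}$ (or more robustly, some polynomial in it) kills $\De(\al)$. The cleanest route: $L(\al)$ is a cuspidal irreducible of $H_\al$; restricting further, one knows (e.g. from the grading, since $\deg(x_t 1_\bi) = 2$ in $ADE$ type and $L(\al)$ is finite-dimensional) that $x_t - x_{t+1}$ acts nilpotently on $L(\al)$, hence on $L'(\al)$. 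The main obstacle is pinning down exactly why $x_r - x_{r+1}$ acts nilpotently on the finite-dimensional module $L'(\al)$ — but this follows because it is a homogeneous element of positive degree acting on a finite-dimensional graded module, so some power lands in degrees above the top degree and must be zero. Then $d := N \cdot(\text{this nilpotency degree})$, summed appropriately over the telescoping, gives the required $d$ for arbitrary $r,s$.

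\begin{proof}
Reduction to consecutive indices: since the $x_t$ pairwise commute,
$$x_r - x_s = \sum_{t=\min(r,s)}^{\max(r,s)-1} (x_t - x_{t+1})$$
is a sum of pairwise commuting elements, so if $(x_t - x_{t+1})^{d_t}$ annihilates $\De(\al)$ for each $t$, then $(x_r-x_s)^d$ annihilates $\De(\al)$ for $d=1+\sum_t(d_t-1)$. Thus it suffices to find, for a fixed $1\le t<n$, some $d\in\Z_{>0}$ with $(x_t-x_{t+1})^d\De(\al)=0$.

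Set $y:=x_t-x_{t+1}\in H_\al'$. By Lemma~\ref{LNewDelta}(ii) and Theorem~\ref{y acts freely}, $\De(\al)$ is a free $k[x_r]$-module (for our chosen $r$) with basis $\{1\otimes v_1,\dots,1\otimes v_N\}$ where $\{v_1,\dots,v_N\}$ is a $k$-basis of $L'(\al)$; in particular, as a graded $k$-space, $\De(\al)=\bigoplus_{d\ge0}q^{2d}L'(\al)$ where the degree-$2d$ layer $z^d\otimes L'(\al)$ is identified with $L'(\al)$ via $z^d\otimes v\mapsto v$. Since $z$ is central in $H_\al$ by Theorem~\ref{TCenter} and $y\in H_\al'$, the action of $y$ on $H_\al\otimes_{H_\al'}L'(\al)$ satisfies $y\cdot(z^d\otimes v)=z^d\otimes(y v)$, i.e.\ on each layer $z^d\otimes L'(\al)$ the operator $y$ acts exactly as it does on the $H_\al'$-module $L'(\al)$.

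Finally, $y=x_t-x_{t+1}$ is a homogeneous element of degree $2$ (as $R$ is simply laced, $\deg(x_j 1_\bi)=2$ for all $j,\bi$). Hence $y$ acts on the finite-dimensional graded module $L'(\al)$ by a degree-$2$ map, so $y^{N}$ raises degree by $2N$; since $L'(\al)$ has at most $N$ nonzero graded components, $y^{N}L'(\al)=0$. Therefore $y^{N}$ annihilates every layer $z^d\otimes L'(\al)$, i.e.\ $(x_t-x_{t+1})^{N}\De(\al)=0$. Taking $d_t=N$ for all $t$ in the reduction above completes the proof.
\end{proof}
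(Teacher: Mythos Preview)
Your proof is correct and follows essentially the same approach as the paper: use that $\De(\al)\cong H_\al\otimes_{H_\al'}L'(\al)$, that $z$ is central, and that the relevant difference of $x$'s lies in $H_\al'$ and acts nilpotently on the finite-dimensional graded module $L'(\al)$. The only difference is that your reduction to consecutive indices is unnecessary, since $x_r-x_s=\sum_{t}(x_t-x_{t+1})$ already lies in $H_\al'$, so the paper applies the argument directly to $x_r-x_s$.
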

\begin{proof}
Pick $d$ such that $(x_r-x_s)^d$ annihilates $L(\al)$. Since $\De(\al)=H_\al\otimes_{H_\al'} L'(\al)$ is spanned by vectors of the form $z^m\otimes v'$ with $m\in\Z_{\geq 0}$ and $v'\in L'(\al)$, it suffices to note that $(x_r-x_s)^d(z^m\otimes v')=z^m\otimes (x_r-x_s)^dv'=0$.
\end{proof}

%%%%%%%%%

\section{Reduction modulo $p$}\label{SRed}\label{SRedModP}
\subsection{Changing scalars}
In this subsection we develop a usual formalism of modular representation theory for KLR algebras. There will be nothing surprising here, but we need to exercise care since we work with infinite dimensional algebras and often with infinite dimensional modules. 

From now on, we will work with different ground rings, so our notation needs to become more elaborate. Recall that the KLR algebra $H_\al$ is defined over an arbitrary commutative unital  ring $k$, and to emphasize which $k$ we are working with, we will use the notation $H_{\al,k}$ or $H_{\al;k}$. %Of course, we have $H_{\al,k}=H_{\al,\Z}\otimes_\Z k$. 
In all our notation we will now use the corresponding index.
% to emphasize what ground ring we are working over. 
For example, if $k$ is a field, we now denote the irreducible cuspidal modules over $H_{\al,k}$ by $L(\al)_k$.

Let $p$ be a fixed prime number, and $F:=\Z/p\Z$ be the prime field of characteristic~$p$. We will use the $p$-modular system $(F,R,K)$ with $R=\Z_p$ and $K=\Q_p$. Oftentimes (when we can avoid  lifting idempotents) we could get away with $R=\Z$, $K$ any field of characteristic zero, and $F$ any field of characteristic~$p$. 

Recall from Section~\ref{SPrel} that for a left Noetherian graded algebra $H$, we denote by $\mod{H}$ the category of finitely generated graded $H$-modules, for which we have the groups $\ext^i_H(V,W)$ and $\Ext^i_H(V,W)$. To deal with change of scalars in Ext groups, we will use the following version of the Universal Coefficient Theorem:

\begin{Theorem} %\label{}%{\rm \cite{}}%
{\bf (Universal Coefficient Theorem)}
Let $V_R,W_R$ be modules in $\mod{H_{\al,R}}$, free as $R$-modules, and $k$ be an $R$-algebra. 
%Denote $V_k:=V_R\otimes k$ and $W_k:=W_R\otimes k$. 
Then for all $j\in \Z_{\geq 0}$ there is an exact sequence of (graded) $k$-modules
\begin{align*}
0
&\rightarrow \Ext^j_{H_{\al,R}}(V_R,W_R)\otimes_R k\rightarrow \Ext^j_{H_{\al,k}}(V_R\otimes_R k,W_R\otimes_R k)
\\
&\rightarrow \Tor^R_1\big(\Ext^{j+1}_{H_{\al,R}}(V_R,W_R)\, ,\,k\big)\rightarrow 0.
\end{align*}
In particular, 
$$\Ext^j_{H_{\al,R}}(V_R,W_R)\otimes_R K\cong  \Ext^j_{H_{\al,K}}(V_R\otimes_R K,W_R\otimes_R K).$$
\end{Theorem}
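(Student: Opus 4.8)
The plan is to run the standard algebraic argument via a resolution of $V_R$ by finitely generated projective $H_{\al,R}$-modules, while paying attention to the two subtleties the authors flagged: the algebras are infinite dimensional (so I must keep everything finitely generated and use Noetherianity of $H_{\al,R}$, which follows from the Basis Theorem since $R$ is Noetherian), and the gradings must be tracked. First I would choose a resolution $P_\bullet\to V_R$ in $\mod{H_{\al,R}}$ with each $P_i$ finitely generated graded projective. Since $V_R$ is free as an $R$-module, a projective $H_{\al,R}$-module is $R$-free, and more importantly I want the complex $\Hom_{H_{\al,R}}(P_\bullet,W_R)$ of $R$-modules to consist of $R$-free (or at least flat) modules; because each $P_i$ is a summand of a finite sum of grading-shifted copies of $H_{\al,R}$, we have $\Hom_{H_{\al,R}}(P_i,W_R)$ a summand of a finite sum of shifts of $W_R$, hence $R$-free since $W_R$ is $R$-free. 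This is the key input that lets me invoke the algebraic Universal Coefficient / Künneth spectral sequence argument for a chain complex of flat modules.

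The second step is the base-change identity for the complex: for the $R$-algebra $k$, there is a natural isomorphism of complexes of graded $k$-modules
$$\Hom_{H_{\al,R}}(P_\bullet,W_R)\otimes_R k\;\cong\;\Hom_{H_{\al,k}}(P_\bullet\otimes_R k,\,W_R\otimes_R k).$$
To get this I would reduce to $P_i=H_{\al,R}$ (using additivity and grading shifts), where both sides compute $W_R\otimes_R k$ functorially; then I note that $P_\bullet\otimes_R k$ is a projective resolution of $V_R\otimes_R k$ over $H_{\al,k}$ — here I use that $\otimes_R k$ is right exact and sends projectives to projectives, and crucially that $P_\bullet\otimes_R k$ stays \emph{exact}, which holds because the syzygies of $V_R$ are $R$-free (each kernel $\im(P_{i+1}\to P_i)$ sits in a short exact sequence of $R$-free modules, so $\Tor^R_1$ with $k$ vanishes). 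Thus the right-hand complex computes $\Ext^j_{H_{\al,k}}(V_R\otimes_R k,W_R\otimes_R k)$.

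The third step is the purely homological-algebra Universal Coefficient Theorem for a cochain complex $C^\bullet$ of $R$-modules whose terms and coboundaries have $R$-free images (equivalently, all cocycles and coboundaries are $R$-free, which we have arranged): for each $j$ there is a natural short exact sequence
$$0\rightarrow H^j(C^\bullet)\otimes_R k\rightarrow H^j(C^\bullet\otimes_R k)\rightarrow \Tor^R_1\!\big(H^{j+1}(C^\bullet),k\big)\rightarrow 0.$$
Applying this with $C^\bullet=\Hom_{H_{\al,R}}(P_\bullet,W_R)$, and identifying $H^j(C^\bullet)=\Ext^j_{H_{\al,R}}(V_R,W_R)$ and $H^j(C^\bullet\otimes_R k)=\Ext^j_{H_{\al,k}}(V_R\otimes_R k,W_R\otimes_R k)$ by the previous step, gives the asserted sequence; everything is compatible with the grading since all maps in sight are homogeneous of degree zero. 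Finally, taking $k=K$ kills the $\Tor^R_1$ term because $K$ is $R$-flat (indeed the field of fractions of $R=\Z_p$), yielding the displayed isomorphism after tensoring; one can equally observe $\Ext^j_{H_{\al,R}}(V_R,W_R)\otimes_R K$ is already the cohomology of the $K$-extended complex.

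The main obstacle, and the only place real care is needed, is establishing exactness of $P_\bullet\otimes_R k$ and $R$-freeness of the relevant cocycle/coboundary modules in the infinite-dimensional setting: I must verify that the syzygies of the finitely generated $V_R$ remain finitely generated (Noetherianity) and $R$-torsion-free, and that $\Hom_{H_{\al,R}}(P_i,W_R)$ is $R$-free even though $W_R$ may be infinite dimensional over $R$ — both follow once $P_i$ is a finitely generated projective, hence a direct summand of a \emph{finite} sum of grading shifts of $H_{\al,R}$. With that in hand the rest is the formal Universal Coefficient argument.
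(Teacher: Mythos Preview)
Your proposal is correct and follows essentially the same approach as the paper: take a resolution of $V_R$ by finitely generated projectives (the paper says ``free''), apply $\Hom_{H_{\al,R}}(-,W_R)$ to obtain a complex of $R$-free modules with finitely many generators in each graded degree, and then invoke the classical Universal Coefficient Theorem (the paper simply cites Rotman~[Rot, Theorem~8.22]); your care with the base-change identity for $\Hom$ and with exactness of $P_\bullet\otimes_R k$ fills in precisely the details the paper leaves implicit, and the $k=K$ case is handled the same way via flatness of $K$.
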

\begin{proof}
This is known. Apply $\Hom_{H_{\al,R}}(-,W_R)$ to a free resolution of $V_R$ to get a complex $C_\bullet$ of free (graded) $R$-modules with finitely many generators in every graded degree. Now follow the proof of \cite[Theorem 8.22]{Rotman}. The second statement follows from the first since $K$ is a flat $R$-module. 
\end{proof}

We need another standard result, whose proof is omitted. 

\begin{Lemma} \label{LExtExt} %{\rm \cite{}}%{\bf ()}
Let $k=K$ or $F$, $V_R,W_R\in\mod{H_{\al,R}}$ be free as $R$-modules, and 
$$0\to W_R\stackrel{\iota}{\longrightarrow} E_R\stackrel{\pi}{\longrightarrow} V_R\to 0$$
 be the extension corresponding to a class $\xi\in\Ext^1_{H_{\al,R}}(V_R,W_R)$. Identifying  $\Ext^1_{H_{\al,R}}(V_R,W_R)\otimes_R k$ with a subgroup of $\Ext^1_{H_{\al,k}}(V_R\otimes_R k,W_R\otimes_R k)$,  we have that 
$$
 0\to W_R\otimes_Rk\stackrel{\iota\otimes\id_k}{\longrightarrow} E_R\otimes_Rk\stackrel{\pi\otimes\id_k}{\longrightarrow} V_R\otimes_Rk\to 0
 $$ is the extension corresponding to a class $\xi\otimes 1_k\in\Ext^1_{H_{\al,R}}(V_R,W_R)\otimes_Rk$. 
\end{Lemma}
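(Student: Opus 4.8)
The plan is to prove Lemma~\ref{LExtExt} by a direct diagram-chasing argument, reducing everything to the standard functoriality of the Yoneda interpretation of $\Ext^1$. The key input is that $V_R$ is free as an $R$-module, so tensoring the given extension with $k$ keeps the sequence exact; this is precisely what makes $E_R\otimes_Rk$ an extension of $V_R\otimes_Rk$ by $W_R\otimes_Rk$ rather than merely a complex, and it is what lets the Universal Coefficient Theorem embed $\Ext^1_{H_{\al,R}}(V_R,W_R)\otimes_Rk$ into $\Ext^1_{H_{\al,k}}(V_R\otimes_Rk,W_R\otimes_Rk)$ in the first place.

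First I would recall how the embedding from the Universal Coefficient Theorem is constructed: one takes a free resolution $P_\bullet\to V_R$ of $V_R$ over $H_{\al,R}$, applies $\Hom_{H_{\al,R}}(-,W_R)$, and identifies $\Ext^j$ with the cohomology of the resulting complex $C^\bullet$; the map $\Ext^j(V_R,W_R)\otimes_Rk\to \Ext^j(V_R\otimes_Rk, W_R\otimes_Rk)$ is then the composite of the natural map $H^j(C^\bullet)\otimes_Rk\to H^j(C^\bullet\otimes_Rk)$ with the identification of $C^\bullet\otimes_Rk$ with $\Hom_{H_{\al,k}}(P_\bullet\otimes_Rk, W_R\otimes_Rk)$ — here one uses that $P_\bullet\otimes_Rk$ is a free resolution of $V_R\otimes_Rk$, which again needs $V_R$ (hence the syzygies, since $R$ is a PID and submodules of free modules are free) to be $R$-free, or more simply that $P_\bullet$ consists of $H_{\al,R}$-modules free over $R$ so that $\Tor$ vanishes. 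Then I would translate the class $\xi$ into this complex picture: lift $\id_{V_R}$ to a chain map, or rather represent $\xi$ by a cocycle $\varphi\colon P_1\to W_R$ (with $P_0/\im P_1 = V_R$), and recall the explicit recipe recovering the extension $E_R$ as the pushout of $0\to \im(P_1\to P_0)\to P_0\to V_R\to 0$ along $\varphi$ — equivalently $E_R = (P_0\oplus W_R)/\{(\partial p, -\varphi p)\}$.

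The main step is then to observe that this pushout construction commutes with $-\otimes_Rk$: since $P_0\otimes_Rk\to V_R\otimes_Rk$ is still surjective with kernel the image of $P_1\otimes_Rk$ (using $R$-freeness to preserve exactness), forming the pushout of $0\to \im(P_1\otimes_Rk\to P_0\otimes_Rk)\to P_0\otimes_Rk\to V_R\otimes_Rk\to 0$ along $\varphi\otimes\id_k$ gives exactly $E_R\otimes_Rk$ with the induced maps $\iota\otimes\id_k$ and $\pi\otimes\id_k$. On the other hand, $\varphi\otimes\id_k$ is precisely a cocycle representing the image of $\xi\otimes1_k$ under the embedding described above, so the extension it classifies is by definition $E_R\otimes_Rk$. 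Comparing the two identifications yields the claim. The one point requiring care — and the place I expect to spend the most effort — is checking that the ``identify $\Ext^1_{H_{\al,R}}(V_R,W_R)\otimes_Rk$ with a subgroup'' map really is the pushout-compatible one and not off by the connecting map into $\Tor^R_1(\Ext^2,k)$; but since $\xi\otimes1_k$ lands in the honest subgroup $\Ext^1(V_R,W_R)\otimes_Rk$ (the image of the first map in the UCT sequence), and that map is by construction induced by $-\otimes_Rk$ on cocycles, the pushout compatibility is automatic. Everything else is a routine verification that I would leave to the reader, which is consistent with the paper's stated intention (``whose proof is omitted'').
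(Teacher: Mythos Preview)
Your argument is correct: representing $\xi$ by a cocycle $\varphi\colon P_1\to W_R$ for a free $H_{\al,R}$-resolution $P_\bullet\to V_R$, recovering $E_R$ as the pushout along $\varphi$, and then observing that both the cocycle description and the pushout construction commute with $-\otimes_Rk$ (using $R$-freeness of $V_R$ and of the terms $P_j$, hence of the syzygies since $R$ is a PID) is exactly the standard verification, and your discussion of why the UCT embedding is the ``pushout-compatible'' map is the right point to flag. The paper itself omits the proof of this lemma as a standard fact, so there is no alternative argument to compare against; your write-up supplies precisely the details the paper leaves implicit.
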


Let $k=K$ or $F$, and $V_k$ be an $H_{\al,k}$-module. We say that an $H_{\al,R}$-module $V_R$ is an {\em $R$-form of $V_k$} if every graded component of $V_R$ is free of finite rank as an $R$-module and, identifying $H_{\al,R}\otimes_R k$ with $H_{\al,k}$, we have  $V_R\otimes_R k\cong V_k$ as $H_{\al,k}$-modules. %We say that an $H_{\al,k}$-module $V$ is {\em defined over $R$} if it has an $R$-form. 
%Let $V_K$ be an $H_{\al,K}$-module. B
If $k=K$, by a {\em full lattice} in $V_K$ we mean an $R$-submodule $V_R$ of $V_K$ such that every graded component $V_{d,R}$ of $V_R$ is a finite rank free $R$-module  which generates the graded component $V_{d,K}$ as a $K$-module. If $V_R$ is an $H_{\al,R}$-invariant full lattice in $V_K$, it is an $R$-form of $V_K$. Now we can see that every $V_K\in \mod{H_{\al,K}}$ has an $R$-form: pick $H_{\al,K}$-generators $v_1,\dots,v_r$ and define $V_R:=H_{\al,R}\cdot v_1+\dots+H_{\al,R}\cdot v_1$. 

The projective indecomposable modules over $H_{\al,F}$ have projective $R$-forms. Indeed, any $P(\la)_F$ is of the form $H_{\al,F}e_{\la,F}$ for some {\em degree zero} idempotent $e_{\la,F}$. By the Basis Theorem, the degree zero component $H_{\al,F,0}$ of $H_{\al,F}$ is defined over $R$; more precisely,   we have 
$H_{\al,k,0}=H_{\al,R,0}\otimes_R k$ for $k=K$ or $F$. Since $H_{\al,F,0}$ is finite dimensional, by the classical theorem on lifting idempotents \cite[(6.7)]{CR}, there exists an idempotent $e_{\la,R}\in H_{\al,R,0}$ such that $e_{\la,F}=e_{\la,R}\otimes 1_F$, and $P(\la)_R:=H_{\al,R}e_{\la,R}$ is an $R$-form of $P(\la)_F$. The notation $P(\la)_R$ will be reserved only for this specific  $R$-form of $P(\la)_F$.  
Note that, while the $H_{\al,R}$-module $P(\la)_R$ is indecomposable, it is not in general true that  $P(\la)_R\otimes_R K\cong P(\la)_K$, see Lemma~\ref{LAdj} for more information. 

Let $V_K\in\mod{H_{\al,K}}$ and $V_R$ be an $R$-form of $V_K$. 
The $H_{\al,F}$-module $V_R\otimes_R F$ is called a {\em reduction modulo $p$} of $V_K$. Reduction modulo $p$ in general depends on the choice of $V_R$. However, as usual, we have:

\begin{Lemma} \label{LRedIndep} %{\rm \cite{}}%{\bf ()}
If $V_{K}\in\mod{H_{\al,K}}$ and $V_R$ is an $R$-form of $V_K$, then for any $\la\in\KP(\al)$, we have 
$$
[V_R\otimes_R F:L(\la)_F]_q=\DIM \Hom_{H_{\al,K}}(P(\la)_R\otimes_R K,V_K).
$$
In particular, the composition multiplicities $[V_R\otimes_R F:L(\la)_F]_q$ %of reduction module $p$ 
are independent of the choice of an $R$-form $V_R$. 
\end{Lemma}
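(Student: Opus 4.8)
The plan is to compute the Hom-space appearing on the right-hand side by a change-of-rings argument, and then to identify it with a suitable reduction-modulo-$p$ multiplicity. First I would recall that $P(\la)_R=H_{\al,R}e_{\la,R}$ for a degree-zero idempotent $e_{\la,R}$ lifting $e_{\la,F}$, so that $P(\la)_R\otimes_R K=H_{\al,K}e_{\la,R}$ is a projective $H_{\al,K}$-module, and hence
\[
\Hom_{H_{\al,K}}(P(\la)_R\otimes_R K,V_K)\cong e_{\la,R}V_K=1_{e_{\la}}\cdot V_K
\]
as graded $K$-modules, with the graded dimension recording $\DIM\,e_{\la,R}V_K$. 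The analogous statement over $F$ gives $\Hom_{H_{\al,F}}(H_{\al,F}e_{\la,F},V_R\otimes_R F)\cong e_{\la,F}(V_R\otimes_R F)=(e_{\la,R}V_R)\otimes_R F$, since $e_{\la,R}$ has degree zero and each graded piece of $V_R$ is free over $R$. Thus both quantities are the graded rank of the finitely-generated free $R$-module $e_{\la,R}V_R$ (once one checks $e_{\la,R}V_R$ is a full lattice in $e_{\la,R}V_K$), which proves the displayed equality; the ``in particular'' is then immediate because the right-hand side $\DIM\Hom_{H_{\al,K}}(P(\la)_R\otimes_R K,V_K)$ makes no reference to $V_R$.

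The one point that needs care, and which I expect to be the main obstacle, is the identification of $[V_R\otimes_R F:L(\la)_F]_q$ with $\DIM\Hom_{H_{\al,F}}(P(\la)_F,V_R\otimes_R F)$: this is exactly the definition of graded composition multiplicity given earlier in the excerpt (via $[W:L(\la)]_q:=\DIM\Hom(P(\la),W)$), but that definition was stated for finitely generated modules over a field, and $V_R\otimes_R F$ is indeed in $\mod{H_{\al,F}}$, so one must simply note that $V_R\otimes_R F$ is finitely generated over $H_{\al,F}$ — which follows because $V_K$ is finitely generated over $H_{\al,K}$ and $V_R$ is generated over $H_{\al,R}$ by finitely many elements, each of whose images generate $V_R\otimes_R F$. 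With that in hand the chain of identifications
\[
[V_R\otimes_R F:L(\la)_F]_q=\DIM\Hom_{H_{\al,F}}(P(\la)_F,V_R\otimes_R F)=\DIM\big((e_{\la,R}V_R)\otimes_R F\big)
\]
and
\[
\DIM\Hom_{H_{\al,K}}(P(\la)_R\otimes_R K,V_K)=\DIM\big((e_{\la,R}V_R)\otimes_R K\big)
\]
coincide degree by degree, because $e_{\la,R}V_R$ is free of finite rank in each degree and tensoring a finite-rank free $R$-module with $K$ or with $F$ gives the same rank.

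Alternatively — and perhaps more cleanly — one can invoke the Universal Coefficient Theorem from the excerpt with $j=0$, applied to $V_R'=P(\la)_R$ and $W_R=V_R$: since $\Hom_{H_{\al,R}}(P(\la)_R,V_R)=e_{\la,R}V_R$ is already free over $R$ (being a submodule-by-degree of the free modules $(V_R)_d$, cut out by an idempotent), the $\Tor_1^R$ term vanishes and one gets $\Hom_{H_{\al,R}}(P(\la)_R,V_R)\otimes_R k\cong\Hom_{H_{\al,k}}(P(\la)_R\otimes_R k,V_R\otimes_R k)$ for $k=K,F$. Taking graded dimensions of both sides for $k=K$ and $k=F$ and comparing (the left side is the same Laurent series in $q$ in both cases, since a finitely-generated free $R$-module in each degree has the same rank after $-\otimes_R K$ or $-\otimes_R F$) yields the claimed equality, and with it the independence of the choice of $R$-form. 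I would present this UCT-based route as the proof, since it reuses machinery already set up and avoids re-deriving the projectivity of $P(\la)_R\otimes_R K$ in detail; the only genuinely new verification is that $\Hom_{H_{\al,R}}(P(\la)_R,V_R)$ is $R$-free, which is the main obstacle and follows from $\Hom_{H_{\al,R}}(H_{\al,R}e_{\la,R},V_R)\cong e_{\la,R}V_R$ together with freeness of each graded component of $V_R$.
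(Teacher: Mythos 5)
Your UCT-based route is essentially the paper's own proof: the paper applies the Universal Coefficient Theorem with $j=0$ to $(P(\la)_R,V_R)$ over $F$, observes that $\Hom_{H_{\al,R}}(P(\la)_R,V_R)$ is graded $R$-free so that its rank can be read off after $-\otimes_R K$ or $-\otimes_R F$, and then uses the UCT again over $K$ to land on $\Hom_{H_{\al,K}}(P(\la)_R\otimes_R K,V_K)$. Your idempotent route is a more elementary unpacking of the same mechanism (the UCT isomorphism for $j=0$ in this case \emph{is} the isomorphism $\Hom(He,W)\cong eW$ read before and after base change), and it is arguably cleaner since it makes visible why everything reduces to the graded rank of the free $R$-module $e_{\la,R}V_R$.

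There is, however, a small slip in your justification for the UCT route. In the exact sequence of the paper's Universal Coefficient Theorem at $j=0$, the third term is $\Tor_1^R\bigl(\Ext^1_{H_{\al,R}}(P(\la)_R,V_R),\,k\bigr)$, \emph{not} a $\Tor$ against $\Hom_{H_{\al,R}}(P(\la)_R,V_R)$. So its vanishing has nothing to do with $\Hom_{H_{\al,R}}(P(\la)_R,V_R)\cong e_{\la,R}V_R$ being $R$-free; it vanishes simply because $P(\la)_R$ is a projective $H_{\al,R}$-module and hence $\Ext^1_{H_{\al,R}}(P(\la)_R,V_R)=0$. The $R$-freeness of $e_{\la,R}V_R$ is still essential, but for a different step: it is what lets you conclude that the graded dimension of $\Hom_{H_{\al,R}}(P(\la)_R,V_R)\otimes_R k$ is the same Laurent series for $k=K$ and $k=F$. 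With that correction, both of your arguments are sound and match the paper in content.
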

\begin{proof}
We have $[V_R\otimes_R F:L(\la)_F]_q=\DIM\Hom_{H_{\al,F}}(P(\la)_F,V_R\otimes_R F)$. %Let $P(\la)_R$ be a projective $R$-form of $P(\la)_F$. Then b
By the Universal Coefficient Theorem, 
$$
\Hom_{H_{\al,F}}(P(\la)_F,V_R\otimes_R F)\cong \Hom_{H_{\al,R}}(P(\la)_R,V_R)\otimes_R F.
$$
Moreover, note that $\Hom_{H_{\al,R}}(P(\la)_R,V_R)$ is $R$-free of (graded) rank equal to $\DIM \Hom_{H_{\al,R}}(P(\la)_R,V_R)\otimes_R k$ for $k=F$ or $K$. Now, by the Universal Coefficient Theorem again, we have that 
$$
\DIM\Hom_{H_{\al,R}}(P(\la)_R,V_R)\otimes_R K
=\DIM
\Hom_{H_{\al,K}}(P(\la)_R\otimes_R K,V_R\otimes_R K),
$$
which completes the proof, since $V_R\otimes_R K\cong V_K$. 
\end{proof}

Our main interest is in reduction modulo $p$ of the irreducible $H_{\al,K}$-modules $L(\la)_K$. %be the irreducible $H_{\al,K}$-module corresponding to a Kostant partition $\la\in \KP(\al)$. 
Pick a non-zero homogeneous vector $v\in L(\la)_K$ and define $L(\la)_R:=H_{\al,R}\cdot v$. Then $L(\la)_R$ is an $H_{\al,R}$-invariant full lattice in $L(\la)_K$, and reducing modulo $p$, we get an $H_{\al,F}$-module $L(\la)_R\otimes_R F$. In general, $L(\la)_R\otimes_R F$ is not $L(\la)_F$, although this happens  `often', for example for cuspidal modules:

\begin{Lemma} \label{LRedCusp} %{\rm \cite{}}%{\bf ()}
\cite[Proposition 3.20]{KSing} 
Let $\al\in R^+$. Then  $L(\al)_R\otimes_R F\cong L(\al)_F$. 
\end{Lemma}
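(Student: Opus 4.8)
Throughout, $L(\al)$ is finite dimensional and $\De(\al)$ is (infinite dimensional but) finitely generated, with the properties recorded in Theorem~\ref{TDelta}; I will use these freely over an arbitrary field $k$.

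The plan is to construct a well-behaved $R$-form $\De(\al)_R$ of the cuspidal standard module $\De(\al)$ and then cut it down to $L(\al)$, in the spirit of how arbitrary $\la$ are treated later in the paper. Concretely, I would produce an $H_{\al,R}$-module $\De(\al)_R$ that is $R$-free of finite rank in each graded degree, together with an endomorphism $x_R\in\End_{H_{\al,R}}(\De(\al)_R)$ of degree $2d_\al$, such that $\De(\al)_R\otimes_R K\cong\De(\al)_K$ and $\De(\al)_R\otimes_R F\cong\De(\al)_F$, with $x_R$ reducing in each case to a nonzero scalar multiple of $x_\al$. Granting this, set $L(\al)_R:=\De(\al)_R/x_R\De(\al)_R$. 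Applying the right-exact functor $-\otimes_R k$ to $0\to q_\al^2\De(\al)_R\xrightarrow{x_R}\De(\al)_R\to L(\al)_R\to0$ and comparing with the short exact sequence of Theorem~\ref{TDelta}(iv) over $k$, we get $L(\al)_R\otimes_R k\cong\De(\al)_k/x_\al\De(\al)_k\cong L(\al)_k$ for both $k=K$ and $k=F$. Moreover $L(\al)_R$ is $R$-torsion-free: if $pw\in x_R\De(\al)_R$ with $w\in\De(\al)_R$, then reducing mod $p$ and using that $x_\al$ acts injectively on $\De(\al)_F$ (Theorem~\ref{TDelta}(iv) over $F$) forces $w\in x_R\De(\al)_R$. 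Hence $L(\al)_R$ is an $R$-form of $L(\al)_K$ and $L(\al)_R\otimes_R F\cong L(\al)_F$; for the specific lattice $H_{\al,R}\cdot v$ appearing in the statement the same conclusion follows from Lemma~\ref{LRedIndep}, since $L(\al)_F$ is irreducible.

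To build $\De(\al)_R$ and $x_R$, I would imitate the inverse-limit description $\De(\al)\cong\varprojlim\De_m(\al)$ recalled before Theorem~\ref{TDelta}, lifting each $\De_m(\al)$ to an $H_{\al,R}$-lattice by induction on $m$. Put $\De_1(\al)_R:=H_{\al,R}\cdot v$ for a homogeneous vector $v\in L(\al)_K$; given the lattice $\De_{m-1}(\al)_R$, let $\De_m(\al)_R$ be the extension of $\De_{m-1}(\al)_R$ by $q_\al^{2(m-1)}L(\al)_R$ classified by a generator of $\Ext^1_{H_{\al,R}}(\De_{m-1}(\al)_R,q_\al^{2(m-1)}L(\al)_R)$. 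The only point requiring work is that this $\Ext^1$-group is $R$-free of rank one with the correct base changes; by the Universal Coefficient Theorem this reduces to the equalities $\dim_K\Ext^1_{H_{\al,K}}(\De_{m-1}(\al)_K,q_\al^{2(m-1)}L(\al)_K)=\dim_F\Ext^1_{H_{\al,F}}(\De_{m-1}(\al)_F,q_\al^{2(m-1)}L(\al)_F)=1$, which follow from the description of $\catC_{\al,k}$ in Theorem~\ref{TDelta}(ii) (valid over any field), since $\catC_{\al,k}$ is closed under extensions and equivalent to $\mod{k[x]}$ with $x$ in degree $2d_\al$. A generator of the integral $\Ext^1$ then reduces to a generator over each field, so $\De_m(\al)_R\otimes_R k\cong\De_m(\al)_k$; passing to the (degreewise eventually constant) inverse limit yields $\De(\al)_R$ with $\De(\al)_R\otimes_R k\cong\De(\al)_k$. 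Finally, the vanishing $\Ext^j_{H_{\al,k}}(\De(\al)_k,\De(\al)_k)=0$ for $j\geq1$ (Theorem~\ref{TStand}(iii) with $\la=\mu=(\al)$) forces, via the Universal Coefficient Theorem and a Nakayama argument over $R=\Z_p$, that $\Ext^1_{H_{\al,R}}(\De(\al)_R,\De(\al)_R)=0$, hence that $\End_{H_{\al,R}}(\De(\al)_R)$ is $R$-free with $\End_{H_{\al,R}}(\De(\al)_R)\otimes_R k\cong\End_{H_{\al,k}}(\De(\al)_k)\cong k[x]$; taking $x_R$ to be a generator of the rank-one degree-$2d_\al$ component gives the required endomorphism.

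I expect the real content to sit in two places. The first is the assertion that the small category $\catC_{\al,k}$ has \emph{the same} shape ($\simeq\mod{k[x]}$) over every field $k$ --- equivalently, uniform control of self-extensions of the cuspidal simple. This should come from the cited results of \cite{BKM} being characteristic-free; if that is not granted outright, I would prove it by induction on $\height(\al)$, using, for non-simple $\al$, a minimal pair $(\be,\ga)\in\operatorname{mp}(\al)$ and the short exact sequence of Theorem~\ref{TDelta}(v) to express $\De(\al)$, hence $\catC_\al$, through $\De(\be)$, $\De(\ga)$ and parabolic induction $-\circ-$ (which is exact and commutes with $-\otimes_R k$ because the idempotents $1_{\be,\ga}$ are defined over $R$), the inductive hypothesis supplying compatible $R$-forms of $\De(\be)$ and $\De(\ga)$. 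The second is the more mundane but delicate bookkeeping with the \emph{infinite-dimensional} modules $\De(\al)$ over $R$: compatibility of the embeddings $\De_m(\al)_R$, finite generation of the inverse limit in each degree, and the fact that $x_\al$ is represented by a lattice-preserving endomorphism. A possible shortcut that avoids $\De(\al)$ entirely is the contravariant-form route: endow $L(\al)_K$ with its $\iota$-contravariant bilinear form, choose $L(\al)_R$ so that the form takes values in $R$, and show the Gram matrix is unimodular over $R$; then $L(\al)_R\otimes_R F$ has zero radical, so is irreducible, and it has the same (bar-invariant) character as $L(\al)_K$, hence is $L(\al)_F$. There the difficulty reappears as the unimodularity statement, which again seems to require an inductive understanding of cuspidal modules.
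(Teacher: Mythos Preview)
The paper does not prove this lemma; it is quoted with a citation to \cite[Proposition~3.20]{KSing} and used as input for everything that follows. So there is no in-paper argument to compare against.

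Your primary construction is circular. You build the lattices $\De_m(\al)_R$ as iterated extensions of the lattice $L(\al)_R$ and, to see that the relevant integral $\Ext^1$ is free of rank one, you invoke via the Universal Coefficient Theorem the equality
\[
\dim_F\Ext^1_{H_{\al,F}}\!\big(\De_{m-1}(\al)_F,\,q_\al^{2(m-1)}L(\al)_F\big)=1.
\]
But the Universal Coefficient Theorem only gives you $\Ext^1_{H_{\al,F}}\!\big(\De_{m-1}(\al)_R\otimes_R F,\ L(\al)_R\otimes_R F\big)$, and identifying these arguments with $\De_{m-1}(\al)_F$ and $L(\al)_F$ is precisely the conclusion you are after (already at $m=1$). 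Knowing that $\catC_{\al,k}\simeq\mod{k[x]}$ for every field $k$ is not enough: you must still locate $L(\al)_R\otimes_R F$ inside $\catC_{\al,F}$, and a priori it is only some $q^d\De_a(\al)_F$ with $a\ge 1$ unknown. If $a>1$ the subsequent $\Ext$-dimensions change and the identification $\De(\al)_R\otimes_R F\cong\De(\al)_F$ fails. The same gap recurs in your argument for $\End_{H_{\al,R}}(\De(\al)_R)$, which applies Theorem~\ref{TStand}(iii) over $F$ to a module not yet known to be $\De(\al)_F$.

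The missing ingredient is the character equality $\CH L(\al)_K=\CH L(\al)_F$, and that is what the cited reference supplies (by induction on $\height(\al)$, the simple-root case being trivial). Once this is known the lemma follows without passing through $\De(\al)$ at all: reduction modulo $p$ preserves characters, cuspidality is a character condition, hence every composition factor of $L(\al)_R\otimes_R F$ is $\simeq L(\al)_F$; the character equality then forces a single composition factor with zero shift. Your remark about inducting on height via minimal pairs and Theorem~\ref{TDelta}(v) is pointing at exactly this, and that is the route to take. Note also that in the present paper the universal $R$-form $\De(\la)_R$ is built \emph{after} Lemma~\ref{LRedCusp}, and its verification goes through Lemma~\ref{LAdj}, whose proof already uses Lemma~\ref{LRedCusp}; so your plan to deduce Lemma~\ref{LRedCusp} from properties of $\De(\al)_R$ reverses the logical order of the paper.
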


To generalize this lemma to irreducible modules of the form $L(\al^m)$, we need to observe that induction and restriction commute with extension of scalars. More precisely, 
for $\be_1,\dots,\be_m\in Q^+$, $\al=\be_1+\dots+\be_m$, and any ground ring $k$, we denote by $H_{\be_1,\dots,\be_m;k}$ the algebra $H_{\be_1,k}\otimes_k \dots\otimes_k H_{\be_m,k}$ identified as usual with a (non-unital) subalgebra $1_{\be_1,\dots,\be_m;k}H_{\al,k}1_{\be_1,\dots,\be_m;k}\subseteq H_{\al,k}$.

\begin{Lemma} \label{LIndScal} %{\rm \cite{}}%{\bf ()}
Let $V_R\in \mod{H_{\be_1,\dots,\be_m;R}}$ and $W_R\in\mod{H_{\al,R}}$. Then for any $R$-algebra $k$, there are natural isomorphisms of $H_{\al,k}$-modules 
$$(\Ind_{\be_1,\dots,\be_m}^\al V_R)\otimes_R k\cong \Ind_{\be_1,\dots,\be_m}^\al (V_R\otimes_R k)$$ 
and of $H_{\be_1,\dots,\be_m;k}$-modules 
$$(\Res_{\be_1,\dots,\be_m}^\al W_R)\otimes_R k\cong \Res_{\be_1,\dots,\be_m}^\al (W_R\otimes_R k).$$ 
\end{Lemma}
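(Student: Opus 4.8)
The plan is to handle restriction first — which is purely formal — and then obtain the induction isomorphism from the standard base-change formula for tensor products, taking care of naturality and grading shifts as I go.

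The one input that is not formal is the identification $H_{\al,R}\otimes_R k\cong H_{\al,k}$ of graded $k$-algebras. This is where the Basis Theorem (Theorem~\ref{TBasis}) enters: the structure constants of $H_{\al,k}$ relative to the monomial basis (\ref{EBasis}) lie in the prime subring (all the signs $\eps_{i,j}$ and Cartan numbers $c_{i,j}$ are integers), so base change sends the $R$-basis of $H_{\al,R}$ to the $k$-basis of $H_{\al,k}$. The same applies to the tensor-product algebra, giving $H_{\be_1,\dots,\be_m;R}\otimes_R k\cong H_{\be_1,\dots,\be_m;k}$, under which $1_{\be_1,\dots,\be_m}\otimes 1_k\mapsto 1_{\be_1,\dots,\be_m}$. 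Granting this, the restriction statement is immediate: $\Res^\al_{\be_1,\dots,\be_m}W_R=1_{\be_1,\dots,\be_m}W_R$, and since $1_{\be_1,\dots,\be_m}$ is a homogeneous idempotent of $H_{\al,R}$, the map $w\mapsto 1_{\be_1,\dots,\be_m}w$ is the projection onto an $R$-module direct summand of $W_R$; in particular it commutes with $-\otimes_R k$, so $(1_{\be_1,\dots,\be_m}W_R)\otimes_R k=1_{\be_1,\dots,\be_m}(W_R\otimes_R k)$, and the two $H_{\be_1,\dots,\be_m;k}$-module structures agree by the previous sentence. Everything here is manifestly natural in $W_R$.

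For induction, write $A:=H_{\al,R}$, $B:=H_{\be_1,\dots,\be_m;R}$ and $e:=1_{\be_1,\dots,\be_m}$, so that $\Ind^\al_{\be_1,\dots,\be_m}V_R=Ae\otimes_B V_R$ with $Ae$ an $(A,B)$-bimodule and $R$ central. I would then invoke the base-change isomorphism
\[
(Ae\otimes_B V_R)\otimes_R k\ \cong\ (Ae\otimes_R k)\otimes_{B\otimes_R k}(V_R\otimes_R k),
\]
which is natural in $V_R$ (it factors through $Ae\otimes_R k\cong Ae\otimes_B(B\otimes_R k)$ and associativity of $\otimes$). Using the identifications of the previous paragraph, $Ae\otimes_R k\cong H_{\al,k}\,1_{\be_1,\dots,\be_m}$ as an $(H_{\al,k},H_{\be_1,\dots,\be_m;k})$-bimodule and $B\otimes_R k\cong H_{\be_1,\dots,\be_m;k}$; hence the right-hand side is exactly $\Ind^\al_{\be_1,\dots,\be_m}(V_R\otimes_R k)$, and the grading shifts match since $e$ has degree $0$. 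If one prefers to avoid quoting the base-change identity, both $V_R\mapsto(\Ind^\al_{\be_1,\dots,\be_m}V_R)\otimes_R k$ and $V_R\mapsto\Ind^\al_{\be_1,\dots,\be_m}(V_R\otimes_R k)$ are right-exact, commute with finite direct sums, and send the free module $B$ to $H_{\al,k}\,1_{\be_1,\dots,\be_m}$; comparing them on a finite presentation $B^{\oplus a}\to B^{\oplus b}\to V_R\to0$ (available because $R$, hence $B$, is left Noetherian and $V_R$ is finitely generated) and applying the five lemma finishes the argument.

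I do not expect a genuine obstacle: the lemma is bookkeeping. The only non-formal ingredient is $H_{-,R}\otimes_R k\cong H_{-,k}$, which is exactly the Basis Theorem; the rest of the work is keeping the $(A,B)$-bimodule structure and the grading shifts straight so that the maps produced are graded, $k$-linear and natural.
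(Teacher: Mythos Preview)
Your proposal is correct. The paper in fact omits the proof of this lemma entirely (treating it as a standard fact), though a commented-out sketch in the source proceeds exactly as you do: it invokes the Basis Theorem to see that $H_{\al,R}1_{\be_1,\dots,\be_m;R}$ is free of finite rank as a right $H_{\be_1,\dots,\be_m;R}$-module, then builds the natural map via Frobenius reciprocity. Your base-change argument for $\otimes$ (and the alternative via right-exactness and the five lemma on a finite presentation) is precisely the standard way to finish, so there is nothing to add.
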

\iffalse{
\begin{proof}
By the Basis Theorem, as a right $H_{\be_1,\dots,\be_m;R}$-module, $H_{\al,R}1_{\be_1,\dots,\be_m;R}$ is free of finite rank, so $1_{\be_1,\dots,\be_m;R}\otimes M_R$ is a direct summand of the $R$-module $\Ind_{\be_1,\dots,\be_m}^\al M_R$, isomorphic to $M_R$. It follows that 
 the $k$-submodule in $(\Ind_{\be_1,\dots,\be_m}^\al M_R)\otimes_R k$ spanned by all vectors $1_{\be_1,\dots,\be_m;R}\otimes v\otimes1_k$ with $v\in M_R$ is $H_{\be_1,\dots,\be_m;k}$-invariant and isomorphic to the $H_{\be_1,\dots,\be_m;k}$-module $M_R\otimes_R k$. So, by Frobenious Reciprocity, there is a natural map
$$
\Ind_{\be_1,\dots,\be_m}^\al (M_R\otimes_R k)\to (\Ind_{\be_1,\dots,\be_m}^\al M_R)\otimes_R k.
$$
\end{proof}
}\fi

Let $\al\in R^+$ and $m\in \Z_{>0}$. If $k$ is a field, 
by Lemma~\ref{LCuspPower}, we have $L(\al^m)_k\simeq  L(\al)_k^{\circ m}$. By Lemma~\ref{LIndScal},  $L(\al^m)_R:=(L(\al)_R)^{\circ m}$ satisfies $L(\al^m)_R\otimes_R k\simeq L(\al^m)_k$ for $k=K$ or $F$. Taking into account Lemma~\ref{LRedIndep}, we get:

\begin{Lemma} %\label{}%{\rm \cite{}}%{\bf ()}
Let $\al\in R^+$ and $m\in \Z_{>0}$. Then reduction modulo $p$ of $L(\al^m)_K$ is $L(\al^m)_F$.
\end{Lemma}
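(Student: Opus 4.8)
The plan is to observe that the module $L(\al^m)_R:=(L(\al)_R)^{\circ m}$ introduced in the paragraph preceding the statement is an $R$-form of $L(\al^m)_K$ whose reduction modulo $p$ is already irreducible, and then to upgrade this to a statement about \emph{every} $R$-form by appealing to Lemma~\ref{LRedIndep}. First I would record, combining Lemma~\ref{LIndScal}, Lemma~\ref{LRedCusp} and Lemma~\ref{LCuspPower}, that
$$L(\al^m)_R\otimes_R F\;\simeq\;\big(L(\al)_R\otimes_R F\big)^{\circ m}\;\cong\;L(\al)_F^{\circ m}\;\simeq\;L(\al^m)_F,$$
and that likewise $L(\al^m)_R\otimes_R K\simeq L(\al^m)_K$; hence $L(\al^m)_R$ is genuinely an $R$-form of $L(\al^m)_K$, and its reduction modulo $p$ is the irreducible module $L(\al^m)_F$.

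Next I would invoke Lemma~\ref{LRedIndep}: for any $R$-form $V_R$ of $L(\al^m)_K$ the graded composition multiplicities $[V_R\otimes_R F:L(\mu)_F]_q$ are equal to $\DIM\Hom_{H_{\al,K}}(P(\mu)_R\otimes_R K,L(\al^m)_K)$ and therefore do not depend on the choice of $V_R$. Evaluating them on the specific $R$-form $L(\al^m)_R$ just discussed shows they coincide with the composition multiplicities of $L(\al^m)_F$, namely $1$ in the slot $\mu=(\al^m)$ and $0$ otherwise. A graded module whose only composition factor is $L(\al^m)_F$, occurring once and with no grading shift, must be isomorphic to $L(\al^m)_F$; so the reduction modulo $p$ of $L(\al^m)_K$ is $L(\al^m)_F$ for any choice of $R$-form.

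I do not expect a genuine obstacle here: the real work has already been done in Lemmas~\ref{LCuspPower}, \ref{LRedCusp}, \ref{LIndScal} and \ref{LRedIndep}, and this statement is essentially their formal combination. The only point requiring a little care is the bookkeeping of degree shifts---Lemma~\ref{LCuspPower} identifies $L(\al^m)$ with $L(\al)^{\circ m}$ only up to a shift, and similarly for the scalar-extension isomorphisms---but since such a shift can always be absorbed into the choice of the homogeneous generating vector of the lattice $L(\al^m)_R$, it causes no trouble.
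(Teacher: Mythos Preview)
Your proposal is correct and follows essentially the same route as the paper: the paper also defines $L(\al^m)_R:=(L(\al)_R)^{\circ m}$, uses Lemmas~\ref{LCuspPower}, \ref{LRedCusp}, and \ref{LIndScal} to see that this is an $R$-form of $L(\al^m)_K$ with $L(\al^m)_R\otimes_RF\simeq L(\al^m)_F$, and then appeals to Lemma~\ref{LRedIndep} to conclude. Your write-up simply makes the composition-multiplicity and degree-shift bookkeeping more explicit than the paper does.
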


It was conjectured in \cite[Conjecture 7.3]{KRbz} that reduction modulo $p$ of $L(\la)_K$ is always $L(\la)_F$, but counterexamples are given in \cite{Williamson} (see also \cite[Example 2.16]{BKM}). Still, it is important to understand when we have $L(\la)_R\otimes_R F\cong L(\la)_F$:

\begin{Problem}\label{Problem}
Let $\al\in Q^+$. 
\begin{enumerate}
\item[{\rm (i)}] If $\la\in\KP(\al)$, determine when $L(\la)_R\otimes_R F\cong L(\la)_F$. 
\item[{\rm (ii)}] We say that {\em James' Conjecture has positive solution} (for $\al$) if the isomorphism in (i) holds for all $\la\in\KP(\al)$. 
Determine the minimal lower bound $p_\al$ on $p=\cha F$ so that  James' Conjecture has positive solution 
% $L(\la)_R\otimes_R F\cong L(\la)_F$ 
for all
% $\la\in\KP(\al)$ and 
$p\geq p_\al$. 
\end{enumerate}
\end{Problem}

At least, we always have:

\begin{Lemma} \label{LAdj} %{\rm \cite{}}%{\bf ()}
Let $\al\in Q^+$ and $\la\in \KP(\al)$. Then in the Grothendieck group of finite dimensional $H_{\al,F}$-modules we have
\begin{equation}\label{ERed}
[L(\la)_R\otimes_R F]= [L(\la)_F]+\sum_{\mu<\la}a_{\la,\mu}[L(\mu)_F]
\end{equation}
for some bar-invariant Laurent polynomials $a_{\la,\mu}\in\Z[q,q^{-1}]$. Moreover, 
$$
P(\la)_R\otimes_R K\cong P(\la)_K \oplus \bigoplus_{\mu>\la}a_{\mu,\la}P(\mu)_K.
$$ 
\end{Lemma}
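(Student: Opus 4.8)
The plan is to establish the two assertions of Lemma~\ref{LAdj} in turn, deducing the second from the first together with standard Brauer-reciprocity-style bookkeeping relating projective covers to composition factors.

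First I would prove the decomposition~\eqref{ERed}. The module $L(\la)_R\otimes_R F$ is a finite-dimensional $H_{\al,F}$-module, so it has a well-defined class in the Grothendieck group, and its class is a non-negative combination of the $[L(\mu)_F]$. The top term is $[L(\la)_F]$ with multiplicity $1$: indeed $v\otimes 1_F$ generates $L(\la)_R\otimes_R F$ over $H_{\al,F}$, so $L(\la)_R\otimes_R F$ has simple head, and I claim that head is $L(\la)_F$. To see this, I would compute $[L(\la)_R\otimes_R F:L(\mu)_F]_q$ via Lemma~\ref{LRedIndep}, which gives $\DIM\Hom_{H_{\al,K}}(P(\mu)_R\otimes_R K, L(\la)_K)$; since $L(\la)_K$ is irreducible this is zero unless $P(\mu)_R\otimes_R K$ has $L(\la)_K$ in its head. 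Now $P(\mu)_R\otimes_R K$ is a projective $H_{\al,K}$-module lifting $P(\mu)_F$ in degree-zero data, so in the Grothendieck group $[P(\mu)_R\otimes_R K]$ is a sum of $[P(\nu)_K]$ with multiplicities governed by the decomposition matrix over $F$; the key point is that $P(\mu)_K$ occurs with multiplicity one and all other $P(\nu)_K$ occurring satisfy $\nu>\mu$ (this is exactly where one uses that reduction-mod-$p$ decomposition numbers $a_{\la,\mu}$ are triangular with respect to $\le$, compatibly with the triangularity~\eqref{EDecN} of the $d_{\la,\mu}$). Consequently $\Hom_{H_{\al,K}}(P(\mu)_R\otimes_R K,L(\la)_K)\ne 0$ forces $\mu\le\la$, giving the triangularity in~\eqref{ERed}, and the $\mu=\la$ term contributes multiplicity exactly $1$ because only $P(\la)_K$ (multiplicity one) among the $P(\nu)_K$ has $L(\la)_K$ in its head. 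Bar-invariance of $a_{\la,\mu}$ follows because $L(\la)_R\otimes_R F$, being the reduction of a $\circledast$-self-dual module with a $\circledast$-invariant lattice (one can choose $L(\la)_R$ to be $\iota$-stable, or argue that any two $R$-forms give the same class by Lemma~\ref{LRedIndep} and compare with the $\circledast$-dual lattice), has bar-invariant formal character.

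Next I would deduce the formula for $P(\la)_R\otimes_R K$. Since $P(\la)_R\otimes_R K$ is projective over $H_{\al,K}$, it is a direct sum $\bigoplus_\mu c_{\la,\mu}(q)\,P(\mu)_K$ for some $c_{\la,\mu}(q)\in\Z_{\ge0}[q,q^{-1}]$, and the multiplicities are recovered by $c_{\la,\mu}(q)=\DIM\Hom_{H_{\al,K}}(P(\la)_R\otimes_R K,L(\mu)_K)$. By the Universal Coefficient Theorem this $\Hom$ over $K$ is obtained from $\Hom_{H_{\al,R}}(P(\la)_R,L(\mu)_R)$ by extension of scalars (both sides $R$-free of the appropriate rank, as in the proof of Lemma~\ref{LRedIndep}), and extending instead to $F$ gives $\DIM\Hom_{H_{\al,F}}(P(\la)_F,L(\mu)_R\otimes_R F)=[L(\mu)_R\otimes_R F:L(\la)_F]_q$. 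So $c_{\la,\mu}(q)=[L(\mu)_R\otimes_R F:L(\la)_F]_q$, which by~\eqref{ERed} equals $\de_{\la,\mu}$ when $\mu=\la$ and equals $a_{\mu,\la}$ when $\mu>\la$, and vanishes otherwise. This is exactly the claimed identity
$$P(\la)_R\otimes_R K\cong P(\la)_K\oplus\bigoplus_{\mu>\la}a_{\mu,\la}P(\mu)_K.$$

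The main obstacle I anticipate is the bookkeeping in the first paragraph: making precise that the triangularity of the reduction-mod-$p$ decomposition numbers $a_{\la,\mu}$ with respect to the bilexicographic order holds \emph{before} one has the clean statement, i.e. extracting it from the known triangularity~\eqref{EDecN} of $d_{\la,\mu}$ over both $K$ and $F$ together with the fact that ordinary (characteristic-zero) decomposition numbers for KLR algebras are upper-unitriangular. Concretely, one needs $[L(\la)_K:\,?\,]$ data to propagate through $P(\mu)_R\otimes_R K$ correctly; the cleanest route is to avoid circularity by using only: (a) $[\bar\De(\la)_k]$ is unitriangular for $k=K,F$; (b) the standard modules $\De(\la)_k$ form a (graded) standardization with unitriangular decomposition matrices, so $[P(\mu)_k:\De(\nu)_k]$ is supported on $\nu\ge\mu$; and (c) $P(\mu)_R$ is defined by lifting a degree-zero idempotent, so $[P(\mu)_R\otimes_R K]=[P(\mu)_R\otimes_R F$ reinterpreted$]$ in the sense that both have the same "$\De$-multiplicities" read off from the common idempotent. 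Assembling (a)--(c) to pin down the support of $[P(\mu)_R\otimes_R K]$ on $\nu\ge\mu$, with $P(\mu)_K$ appearing exactly once, is the delicate step; everything after that is formal manipulation with the Universal Coefficient Theorem and Grothendieck groups.
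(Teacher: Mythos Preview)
Your deduction of the second statement from the first via Lemma~\ref{LRedIndep} is correct and matches the paper. The problem is entirely in the first paragraph, and it is precisely the circularity you flag at the end: to prove the triangularity~\eqref{ERed} you invoke the decomposition of $P(\mu)_R\otimes_R K$ into $P(\nu)_K$'s with $\nu\ge\mu$, which is the second statement of the lemma (with the roles of $\la,\mu$ swapped). Your proposed escape route (a)--(c) does not close the gap. Point~(c) is the weak link: there is no a~priori reason that $P(\mu)_R$ has a $\De_R$-filtration, and ``$\De$-multiplicities read off from the common idempotent'' has no clear meaning---the idempotent lives in degree zero and carries no $\De$-filtration data. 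One could try to match characters, but making $\CH\De(\nu)_K=\CH\De(\nu)_F$ precise and usable here requires the universal $R$-forms $\De(\nu)_R$ that the paper only constructs \emph{after} Lemma~\ref{LAdj}. (Incidentally, ``cyclic implies simple head'' is false in general, though you do not end up relying on it.)

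The paper avoids all of this by using the \emph{reduced} standard modules $\bar\De(\la)$ as the bridge between $K$ and $F$, rather than the $P(\mu)_R$'s. Since $\bar\De(\la)_k\simeq L(\la_1)_k\circ\cdots\circ L(\la_r)_k$ and cuspidal irreducibles reduce to cuspidal irreducibles (Lemma~\ref{LRedCusp}) while induction commutes with base change (Lemma~\ref{LIndScal}), one has $[\bar\De(\la)_R\otimes_R F]=[\bar\De(\la)_F]$ on the nose. Now invert the unitriangular relation~\eqref{EDecN} over $K$ to write $[L(\la)_K]=[\bar\De(\la)_K]+\sum_{\mu<\la}f^K_{\la,\mu}[\bar\De(\mu)_K]$, reduce modulo $p$, and re-expand using~\eqref{EDecN} over $F$. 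This gives~\eqref{ERed} with some coefficients $a_{\la,\mu}$ immediately, with no reference to $P(\mu)_R\otimes_R K$. Bar-invariance then follows, as you say, from bar-invariance of $\CH(L(\la)_R\otimes_R F)=\CH L(\la)_K$ and linear independence of the $\CH L(\mu)_F$. This is essentially your point~(a), but applied directly to $[L(\la)_K]$ rather than routed through projectives.
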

\begin{proof}
Let $k=K$ or $F$ and consider the  reduced standard module $\bar\De(\la)_k$, see (\ref{EBarDe}). In view of  (\ref{EDecN}), we can write  
$$[L(\la)_k]:=[\bar\De(\la)_k]+\sum_{\mu<\la}f_{\la,\mu}^k[\bar\De(\mu)_k]$$
for some $f_{\la,\mu}^k\in\Z[q,q^{-1}]$. Using Lemmas~\ref{LIndScal},~\ref{LRedCusp} and induction on the bilexicographical order on $\KP(\la)$, we now deduce that the equation (\ref{ERed}) holds for some, not necessarily bar-invariant, coefficients $a_{\la,\mu}\in\Z[q,q^{-1}]$. Then we also have 
$$
\CH (L(\la)_R\otimes_R F)= \CH(L(\la)_F)+\sum_{\mu<\la}a_{\la,\mu}\CH (L(\mu)_F).
$$
Since reduction modulo $p$ preserves formal characters, the left hand side is bar-invariant. Moreover, every $\CH (L(\mu)_F)$ is bar-invariant. This implies that the coefficients $a_{\la,\mu}$ are also bar-invariant, since by \cite[Theorem 3.17]{KL1}, the formal characters $\{\CH L(\nu)_F\mid \nu\in \KP(\al)\}$ are linearly independent. 

Finally, for any $\mu\in\KP(\la)$, we have 
$$a_{\mu,\la}=
\DIM \Hom_{H_{\al,K}}(P(\la)_R\otimes_R K,L(\mu)_K),
$$ 
thanks to by Lemma~\ref{LRedIndep}. This 
 implies the second statement.  
\end{proof}

\begin{Remark} %\label{}%{\rm \cite{}}%{\bf ()}
{\rm 
For $k=K$ and $F$, denote by $d_{\la,\mu}^k$, the corresponding decomposition numbers, see (\ref{EDecN}), and consider the {\em decomposition matrices} $D^k:=(d_{\la,\mu}^k)_{\la,\mu\in\KP(\al)}$. Setting $A:=(a_{\la,\mu})_{\la,\mu\in\KP(\al)}$, we have  $D^F=D^KA$. So the matrix $A$ plays the role of the {\em adjustment matrix} in the classical James' Conjecture \cite{James}. Note that James' Conjecture has positive solution in the sense of Problem~\ref{Problem} if and only if $A$ is the identity matrix. %Finally, by a version of BGG-reciprocity \cite[Corollary 3.14]{BKM} (cf. also \cite{Kato}),  $d_{\la,\mu}^k$ equals the multiplicity of $\De(\la)_k$ in a $\De$-filtration of $P(\mu)_k$. 
}
\end{Remark}

%Our next goal is to show that all standard modules are also defined over $R$. 

\subsection{Integral forms of standard modules}

Our next goal is to construct some special $R$-forms of standard modules. We call an $H_{\al,R}$-module $\De(\la)_R$ a {\em universal $R$-form of a standard module} if it is an $R$ form for both $\De(\la)_K$ and $\De(\la)_F$. We show how to construct these for all $\la$.

By Theorem~\ref{TStand}(i), for any field $k$, 
the standard module $\De(\al^m)_k$ has simple head $L(\al^m)_k$.  Pick a homogeneous generator $v\in\De(\al^m)_K$ and consider the $R$-form $\De(\al^m)_R:=H_{m\al,R}\cdot v$ of $\De(\al^m)_K$. Further, for any $\al\in Q^+$ and $\la=(\la_1^{m_1},\dots,\la_s^{m_s})\in \KP(\al)$ with $\la_1>\dots>\la_s$, we define the following $R$-form of $\De(\la)_K$ (cf. Lemma~\ref{LIndScal}): 
$$
\De(\la)_R:=\De(\la_1^{m_1})_R\circ\dots\circ \De(\la_s^{m_s})_R.
$$

Let 
%$n=\height(\al)$, $S_{(\la)}:=S_{m_1\height(\la_1)}\times\dots\times S_{m_s\height(\la_s)}\leq S_n$, and denote by $S^{(\la)}$ the set of the minimal length coset representatives for $S_{n}/S_{(\la)}$. Let 
$1_{(\la),R}:=1_{m_1\la_1,\dots,m_s\la_s;R}$. Then, for an appropriate set $S^{(\la)}$ of coset representatives in a symmetric group, we have that 
$
\{\tau_w1_{(\la),R}\mid w\in S^{(\la)}\}
$
is a basis of $H_{\al,R} 1_{(\la),R}$ considered as a right $1_{(\la),R}H_{\al,R}1_{(\la),R}$-module. So 
$$\De(\la)_R=\bigoplus_{w\in S^{(\la)}} \tau_w1_{(\la),R}\otimes \De(\la_1^{m_1})_R\otimes\dots\otimes \De(\la_s^{m_s})_R.
$$
In particular, choosing $v_t\in\De(\la_t^{m_t})_K$ with $\De(\la_t^{m_t})_R=H_{m_t\la_t,R}\cdot v_t$ for all $1\leq t\leq s$ and setting $v:= 1_{(\la),K}\otimes v_1\otimes\dots\otimes v_s$, we have
\begin{equation}\label{EDeRCyclic}
\De(\la)_R=H_{\al,R}\cdot v
\end{equation}

Now we show that $\De(\la)_R$ is a universal $R$-form:

\begin{Lemma} %\label{}%{\rm \cite{}}%{\bf ()}
Let $\al\in Q^+$, and $\la\in \KP(\al)$. Then $\De(\la)_R\otimes_R F\cong \De(\la)_F$. 
\end{Lemma}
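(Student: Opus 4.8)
The plan is to build the isomorphism $\De(\la)_R\otimes_R F\cong \De(\la)_F$ in two stages, reducing the general case to the semicuspidal case $\la=(\al^m)$ via the factorization $\De(\la)_R=\De(\la_1^{m_1})_R\circ\dots\circ\De(\la_s^{m_s})_R$. Granting the semicuspidal case, Lemma~\ref{LIndScal} gives $\De(\la)_R\otimes_R F\cong\big(\De(\la_1^{m_1})_R\otimes_R F\big)\circ\dots\circ\big(\De(\la_s^{m_s})_R\otimes_R F\big)\cong\De(\la_1^{m_1})_F\circ\dots\circ\De(\la_s^{m_s})_F$, and the last module is $\De(\la)_F$ by (\ref{EDelta}). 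So the work is all in the case $\la=(\al^m)$.

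For the semicuspidal case, recall $\De(\al^m)_R=H_{m\al,R}\cdot v$ for a homogeneous generator $v$ chosen inside $\De(\al^m)_K$, so by right exactness of $\otimes_R F$ there is a surjection $H_{m\al,F}\cdot\bar v\twoheadrightarrow \De(\al^m)_R\otimes_R F$ where $\bar v=v\otimes 1_F$; since $v$ generates $\De(\al^m)_K$ and hence (by a Nakayama-type argument using that each graded piece is finitely generated free over $R$) the reduction $\bar v$ generates a nonzero cyclic $H_{m\al,F}$-module, it suffices to show $\De(\al^m)_R\otimes_R F$ is a \emph{quotient} of $\De(\al^m)_F$ and has the same graded character. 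The graded character is immediate: reduction mod $p$ preserves graded dimensions, and $\DIM\De(\al^m)_R=\DIM\De(\al^m)_K=\DIM\De(\al^m)_F$ since both equal $\DIM L(\al^m)\cdot\prod_{r=1}^m 1/(1-q_\al^{2r})$ by Theorem~\ref{TDeSemiCusp}(i) together with Lemma~\ref{LRedCusp} (reduction of $L(\al^m)_K$ is $L(\al^m)_F$). For the quotient statement, I would use the universal property of standard modules: $\De(\al^m)_F$ is the largest quotient of $P(\al^m)_F$ whose composition factors are $\simeq L(\al^m)_F$, equivalently (by Theorem~\ref{TDeSemiCusp}(ii)) it is the projective cover of $L(\al^m)_F$ in the category $\catC_{\al^m}$ of modules with all composition factors $\simeq L(\al^m)_F$. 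So I must check that $\De(\al^m)_R\otimes_R F$ lies in that category, i.e. that all its composition factors are $\simeq L(\al^m)_F$; then projectivity of $\De(\al^m)_F$ in $\catC_{\al^m}$ lifts the identity on the common head $L(\al^m)_F$ to a surjection $\De(\al^m)_F\twoheadrightarrow \De(\al^m)_R\otimes_R F$, which is an isomorphism by the character count.

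To see that every composition factor of $\De(\al^m)_R\otimes_R F$ is $\simeq L(\al^m)_F$: by Lemma~\ref{LRedIndep} the graded composition multiplicity $[\De(\al^m)_R\otimes_R F:L(\mu)_F]_q=\DIM\Hom_{H_{m\al,K}}(P(\mu)_R\otimes_R K,\De(\al^m)_K)$, and by Theorem~\ref{TDeSemiCusp}(i) the module $\De(\al^m)_K$ has only $L(\al^m)_K$ among its composition factors, so $\Hom_{H_{m\al,K}}(P(\mu)_R\otimes_R K,\De(\al^m)_K)=0$ unless $P(\mu)_R\otimes_R K$ has $L(\al^m)_K$ as a composition factor. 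By Lemma~\ref{LAdj}, $P(\mu)_R\otimes_R K\cong P(\mu)_K\oplus\bigoplus_{\nu>\mu}a_{\nu,\mu}P(\nu)_K$; the multiplicity of $L(\al^m)_K$ in $P(\nu)_K$ is $(P(\nu)_K:\De(\al^m)_K)_q\cdot[\De(\al^m)_K:L(\al^m)_K]_q$ by Theorem~\ref{TStand}, and $(P(\nu)_K:\De(\al^m)_K)_q=d^K_{\al^m,\nu}$ vanishes unless $\nu\le\al^m$, while $(\al^m)$ is minimal in $\KP(\al)$ by convexity. Hence the only $\mu$ contributing is $\mu=(\al^m)$, so $\De(\al^m)_R\otimes_R F$ indeed lies in $\catC_{\al^m}$. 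The main obstacle is bookkeeping the minimality of $(\al^m)$ and the triangularity of the adjustment/decomposition matrices to pin down that no $L(\mu)_F$ with $\mu\neq(\al^m)$ sneaks in; once that is in place, everything else is a formal consequence of projectivity in $\catC_{\al^m}$ and the dimension count.
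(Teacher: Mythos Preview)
Your overall strategy matches the paper's proof: reduce to the semicuspidal case $\la=(\al^m)$ via Lemma~\ref{LIndScal} and (\ref{EDelta}), show $\De(\al^m)_R\otimes_R F$ lies in $\catC_{\al^m}$ and has the same composition multiplicities as $\De(\al^m)_F$, then use that $\De(\al^m)_F$ is the projective cover of $L(\al^m)_F$ in $\catC_{\al^m}$ together with cyclicity of $\De(\al^m)_R\otimes_R F$ to obtain a surjection $\De(\al^m)_F\twoheadrightarrow\De(\al^m)_R\otimes_R F$, which is then an isomorphism by the multiplicity count.

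There is, however, a genuine error in your last paragraph. From $\Hom(P(\mu)_R\otimes_RK,\De(\al^m)_K)\neq0$ you correctly deduce that $P(\mu)_R\otimes_RK$ contains $L(\al^m)_K$ as a composition factor, and then try to rule this out for $\mu\neq(\al^m)$. But $P(\nu)_K$ can certainly contain $L(\al^m)_K$ as a composition factor for $\nu\neq(\al^m)$: your formula for this multiplicity is wrong, since it should be a sum $\sum_\kappa (P(\nu)_K:\De(\kappa)_K)_q\,[\De(\kappa)_K:L(\al^m)_K]_q$ over \emph{all} $\kappa$ in the $\De$-filtration of $P(\nu)_K$, not just the single term $\kappa=(\al^m)$, and the other terms need not vanish. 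The fix, which is exactly what the paper does, is to use projectivity of $P(\nu)_K$ directly rather than passing through composition factors of $P(\nu)_K$: one has $\DIM\Hom_{H_{m\al,K}}(P(\nu)_K,\De(\al^m)_K)=[\De(\al^m)_K:L(\nu)_K]_q$, and by Theorem~\ref{TDeSemiCusp}(i) this vanishes unless $\nu=(\al^m)$. Combined with the decomposition of $P(\mu)_R\otimes_RK$ from Lemma~\ref{LAdj} and minimality of $(\al^m)$ in $\KP(m\al)$, this forces $\mu=(\al^m)$. With this correction your argument coincides with the paper's.
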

\begin{proof}
In view of  (\ref{EDelta}) and Lemma~\ref{LIndScal}, we may assume that $\la$ is of the form $(\be^m)$ for a positive root $\be$  so that $\al=m\be$. By Lemma~\ref{LRedIndep}, we have for any $\mu\in\KP(\al)$:
$$[\De(\be^m)_R\otimes_R F:L(\mu)_F]_q=\DIM\Hom_{H_{\al,K}}(P(\mu)_R\otimes_R K,\De(\be^m)_K).
$$
By convexity, $(\be^m)$ is a minimal element of $\KP(\al)$. So Lemma~\ref{LAdj} implies that all composition factors of $\De(\be^m)_R\otimes_R F$ are  $\simeq L(\be^m)_F$. Moreover,
$$
[\De(\be^m)_R\otimes_R F:L(\be^m)_F]_q=[\De(\be^m)_K:L(\be^m)_K]_q=[\De(\be^m)_F:L(\be^m)_F]_q. 
$$

By construction, $\De(\be^m)_R$ is cyclic, hence so is $\De(\be^m)_R\otimes_R F$. So, $\De(\be^m)_R\otimes_R F$ is a module with simple head 
and belongs to the category of all modules in $\mod{H_{\al,F}}$ with composition factors $\simeq L(\be^m)_F$. 
Since $(\be^m)$ is minimal in $\KP(\al)$, we have that $\De(\be^m)_F$ is the projective cover of $L(\be^m)_F$ in this category, see \cite[Lemma 4.11]{Kdonkin}. So there is a surjective homomorphism from $\De(\be^m)_F$ onto $\De(\be^m)_R\otimes_R F$. This has to be an isomorphism since we have proved that the two modules have the same composition multiplicities. 
\end{proof}

From now on, the notation $\De(\la)_R$ is reserved for a {\em universal} $R$-form.  
We begin with a rather obvious consequence of what we have proved so far:

\begin{Proposition} \label{PRTriv} %{\rm \cite{}}%{\bf ()}
Let $\al\in Q^+$ and $\la,\mu\in\KP(\al)$. 
\begin{enumerate}
\item[{\rm (i)}] If $\la\neq \mu$, then $\Hom_{H_{\al,R}}(\De(\la)_R,\De(\mu)_R)=0$.
\item[{\rm (ii)}] For any $R$-algebra $k$, we have $$\End_{H_{\al,R}}(\De(\la)_R)\otimes_R k = \End_{H_\al,k}(\De(\la)_R\otimes_R k).$$
\item[{\rm (iii)}] If $\la\not<\mu$, then $\Ext^j_{H_{\al,R}}(\De(\la)_R,\De(\mu)_R)=0$ for all $j\geq 1$. 
\end{enumerate}
\end{Proposition}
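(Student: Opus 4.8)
The plan is to deduce all three parts from the field case over $F$, where Theorems~\ref{main theorem} and~\ref{TStand}(iii) — both valid over an arbitrary field, in particular over $F$ — already supply the required vanishing, and to transfer this to the integral level via the Universal Coefficient Theorem together with a degreewise Nakayama argument. Throughout I would use the facts established just above: $\De(\la)_R$ is a universal $R$-form, so $\De(\la)_R\otimes_R F\cong\De(\la)_F$ and $\De(\mu)_R\otimes_R F\cong\De(\mu)_F$; $\De(\la)_R$ is cyclic by~(\ref{EDeRCyclic}), hence finitely generated over the graded Noetherian algebra $H_{\al,R}$; and every graded component of $\De(\mu)_R$ is a finitely generated free $R$-module.

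First I would record a finiteness statement. Since $H_{\al,R}$ is graded Noetherian and $\De(\la)_R$ is cyclic, there is a resolution $P_\bullet\to\De(\la)_R$ by finitely generated graded free $H_{\al,R}$-modules, so $\Ext^j_{H_{\al,R}}(\De(\la)_R,\De(\mu)_R)$ is the $j$-th cohomology of $\HOM_{H_{\al,R}}(P_\bullet,\De(\mu)_R)$. As $\HOM_{H_{\al,R}}(q^d H_{\al,R},\De(\mu)_R)\cong q^{-d}\De(\mu)_R$ has finitely generated free $R$-modules as its graded components, the same holds for every term of this complex, and hence each graded component of $\Ext^j_{H_{\al,R}}(\De(\la)_R,\De(\mu)_R)$ is a finitely generated $R$-module, for every $j\geq 0$ — in particular for $j=0$, i.e. for $\HOM_{H_{\al,R}}(\De(\la)_R,\De(\mu)_R)$. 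Next I would apply the Universal Coefficient Theorem with $k=F$: using $\De(\la)_R\otimes_R F\cong\De(\la)_F$ and $\De(\mu)_R\otimes_R F\cong\De(\mu)_F$, it gives an injection of graded $F$-modules $\Ext^j_{H_{\al,R}}(\De(\la)_R,\De(\mu)_R)\otimes_R F\hookrightarrow\Ext^j_{H_{\al,F}}(\De(\la)_F,\De(\mu)_F)$. If $\la\neq\mu$ the right-hand side vanishes for $j=0$ by Theorem~\ref{main theorem} over $F$; if $\la\not<\mu$ it vanishes for all $j\geq 1$ by Theorem~\ref{TStand}(iii) over $F$. In either case the left-hand side is a graded $R$-module $M$, finitely generated in each degree, with $M\otimes_R F=M/pM=0$; since $R=\Z_p$ is local with maximal ideal $(p)$, Nakayama's lemma applied in each graded degree gives $M=0$. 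This proves (i) and (iii).

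For (ii) I would feed (iii) back into the Universal Coefficient Theorem with $V_R=W_R=\De(\la)_R$ and an arbitrary $R$-algebra $k$, obtaining the exact sequence
\[
0\to\End_{H_{\al,R}}(\De(\la)_R)\otimes_R k\to\End_{H_{\al,k}}(\De(\la)_R\otimes_R k)\to\Tor^R_1\big(\Ext^1_{H_{\al,R}}(\De(\la)_R,\De(\la)_R),k\big)\to 0,
\]
whose first map is the natural base-change map. Since $\la\not<\la$, part~(iii) gives $\Ext^1_{H_{\al,R}}(\De(\la)_R,\De(\la)_R)=0$, so the $\Tor$-term vanishes and the base-change map is an isomorphism, which is exactly~(ii).

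I do not expect a genuine obstacle: the mathematical content already lives in Theorems~\ref{main theorem} and~\ref{TStand}(iii) and in the construction of the universal $R$-form, and the present proposition merely repackages these. The one point that needs care is the finiteness bookkeeping of the second paragraph: $\Ext^j_{H_{\al,R}}(\De(\la)_R,\De(\mu)_R)$ is in general not finitely generated over $R$, and it is essential that it be finitely generated \emph{in each graded degree}, as that is what legitimizes the degreewise Nakayama step — and this relies precisely on $H_{\al,R}$ being graded Noetherian and on $\De(\mu)_R$ having finitely generated free graded $R$-components.
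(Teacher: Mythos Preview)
Your proof is correct and follows essentially the same route as the paper: embed $\Ext^j_{H_{\al,R}}(\De(\la)_R,\De(\mu)_R)\otimes_R F$ into $\Ext^j_{H_{\al,F}}(\De(\la)_F,\De(\mu)_F)$ via the Universal Coefficient Theorem, kill the target using Theorem~\ref{main theorem} for (i) and Theorem~\ref{TStand}(iii) for (iii), and then deduce (ii) from the vanishing of $\Ext^1_{H_{\al,R}}(\De(\la)_R,\De(\la)_R)$ in the $\Tor_1$-term. Your version is in fact more complete than the paper's: the paper's proof passes silently from $M\otimes_R F=0$ to $M=0$, whereas you supply the missing justification by first checking that each graded component of $\Ext^j_{H_{\al,R}}(\De(\la)_R,\De(\mu)_R)$ is finitely generated over $R$ and then applying Nakayama degreewise.
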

\begin{proof}
By the Universal Coefficient Theorem, for any $j\geq 0$, we can embed $\Ext^j_{H_{\al,R}}(\De(\la)_R,\De(\mu)_R)\otimes_R F$  into $\Ext^j_{H_{\al,F}}(\De(\la)_F,\De(\mu)_F)$. So (i) follows from  Theorem A, and (iii) follows from Theorem~\ref{TStand}(iii). Now (ii) also follows from the Universal Coefficient Theorem, since $\Ext^1_{H_{\al,R}}(\De(\la)_R,\De(\la)_R)=0$ by (iii), which makes the  $\Tor_1$-term trivial.
\end{proof}

It turns out that torsion in the Ext groups between $\De(\la)_R$'s bears some importance for Problem~\ref{Problem}, see Remark~\ref{RBears}. So we try to make progress in understanding this torsion. Given an $R$-module $V$, denote 
by $V^\Torsion$ its {\em torsion submodule}. % and by $\Free(V)$ its torsion-free part. Of course, we have $V=\Torsion(V)\oplus\Free(V)$ 
If all graded components $V_d$ of $V$ are finitely generated and trivial for $d\ll 0$, then the {\em graded rank of $V$} is defined as 
$$\RANK V:=\sum_d (\rank V_d)\,q^d\in\Z((q)).$$ 
Of especial importance for us will be the rank of the torsion in Ext-groups: 
$$\RANK \Ext^j_{H_{\al,R}}(\De(\la)_R,\De(\mu)_R)^\Torsion.
$$
The following result was surprising for us: 

\begin{Theorem} \label{TTorsFree}%{\rm \cite{}}%{\bf ()}
Let $\al\in Q^+$ and $\la,\mu\in\KP(\al)$. Then the $R$-module 
$$\Ext^1_{H_{\al,R}}(\De(\la)_R,\De(\mu)_R)$$ is torsion-free. 
\end{Theorem}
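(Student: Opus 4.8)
The strategy is to deduce torsion-freeness of $\Ext^1_{H_{\al,R}}(\De(\la)_R,\De(\mu)_R)$ from the dimension-count afforded by the Universal Coefficient Theorem together with Theorem~A. First I would recall that for an $R$-module $M$ whose graded components are finitely generated, $M$ is torsion-free if and only if $\DIM(M\otimes_R F)=\DIM(M\otimes_R K)$ (as elements of $\Z((q))$), since tensoring with $F$ adds the contribution of the torsion submodule while tensoring with $K$ kills it; more precisely $\RANK M^\Torsion = \DIM(M\otimes_R F) - \DIM(M\otimes_R K)$. So it suffices to show
$$
\DIM \Ext^1_{H_{\al,F}}(\De(\la)_F,\De(\mu)_F) = \DIM \Ext^1_{H_{\al,K}}(\De(\la)_K,\De(\mu)_K),
$$
because by the Universal Coefficient Theorem the left side equals $\DIM\big(\Ext^1_{H_{\al,R}}(\De(\la)_R,\De(\mu)_R)\otimes_R F\big)$ plus $\DIM\Tor^R_1(\Ext^2_{H_{\al,R}}(\ldots),F)$ and the right side equals $\DIM\big(\Ext^1_{H_{\al,R}}(\De(\la)_R,\De(\mu)_R)\otimes_R K\big)$, so any discrepancy would have to come from torsion in $\Ext^1$ or in $\Ext^2$.

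The key point that separates out the $\Ext^1$-torsion is an Euler-characteristic argument. I would consider the $\De$-filtration of $P(\la)_k$ from Theorem~\ref{TStand}(iv)--(v) and use it to compute $\sum_j (-1)^j \DIM\Ext^j(\De(\la)_k,\De(\mu)_k)$ in terms of the decomposition numbers $d^k_{\la,\mu}$ and the Cartan-type pairings; then invoke Theorem~A (the Hom vanishes for $\la\neq\mu$) and Theorem~\ref{TStand}(iii) (higher Ext vanishes unless $\la<\mu$) to see that, for $\la<\mu$, the only potentially nonzero graded $\Ext$ group is $\Ext^1$ — wait, that is false in general, but the alternating sum is still controlled. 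The cleaner route: since $\Hom_{H_{\al,k}}(\De(\la)_k,\De(\mu)_k)=0$ for $\la\neq\mu$ by Theorem~A (and for $\la=\mu$ the Hom space is $\End$, which by Proposition~\ref{PRTriv}(ii) has the same dimension over $F$ and $K$, and has no higher self-extensions by Theorem~\ref{TStand}(iii)), it is enough to control $\Ext^1$ alone. I would use the universal $R$-form $\De(\mu)_R$ together with a minimal projective resolution of $\De(\la)_R$ over $H_{\al,R}$ — or, more robustly, resolve $\De(\la)_R$ by the $\De$-filtered module $P(\la)_R$ and iterate — to get a complex $C_\bullet$ of finite-rank free graded $R$-modules computing $\Ext^\bullet_{H_{\al,R}}(\De(\la)_R,\De(\mu)_R)$, and then argue that $H^0(C_\bullet)=\Hom$ being as small as Theorem~A dictates forces $H^1(C_\bullet)$ to be free.

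The cleanest formulation, and the one I would actually write: apply $\Hom_{H_{\al,R}}(-,\De(\mu)_R)$ to the short exact sequence $0\to\Om\De(\la)_R\to Q\to\De(\la)_R\to 0$ with $Q$ a finite-rank free graded $H_{\al,R}$-module (a first step of a projective resolution), giving
$$
0\to\Hom(\De(\la)_R,\De(\mu)_R)\to\Hom(Q,\De(\mu)_R)\to\Hom(\Om\De(\la)_R,\De(\mu)_R)\to\Ext^1(\De(\la)_R,\De(\mu)_R)\to 0,
$$
where I also use $\Ext^1(Q,\De(\mu)_R)=0$. Here $\Hom(Q,\De(\mu)_R)$ is $R$-free (a finite sum of copies of graded pieces of $\De(\mu)_R$, which are free by construction of the universal form), $\Hom(\De(\la)_R,\De(\mu)_R)$ is free (it is $0$ or $\End(\De(\mu)_R)$, free in either case), and $\Hom(\Om\De(\la)_R,\De(\mu)_R)$ is torsion-free as a submodule of $\Hom_R$ of free modules. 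A submodule of a free $R$-module ($R=\Z_p$ a PID) modulo a free submodule can still have torsion, so this alone is not enough; the extra input is that the \emph{rank} of $\Hom(\Om\De(\la)_R,\De(\mu)_R)$ is computed by base change to $K$ and equals $\DIM\Hom_{H_{\al,K}}(\Om\De(\la)_K,\De(\mu)_K)$, while the cokernel after tensoring with $K$ is exactly $\Ext^1_{H_{\al,K}}(\De(\la)_K,\De(\mu)_K)$ and after tensoring with $F$ surjects onto $\Ext^1_{H_{\al,F}}(\De(\la)_F,\De(\mu)_F)$; comparing the two via the UCT and the already-established equality $\DIM\Hom_{H_{\al,F}}(\De(\la)_F,\De(\mu)_F)=\DIM\Hom_{H_{\al,K}}(\De(\la)_K,\De(\mu)_K)$ pins down the torsion to be zero.

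\textbf{Main obstacle.} The delicate step is making the rank/dimension bookkeeping airtight in the graded, infinite-dimensional setting: the modules $\De(\la)_R$ are infinite rank over $R$, so all equalities must be read degree-by-degree in $\Z((q))$, and one must check that every $\Hom$ and $\Ext$ group appearing has finite-dimensional (hence finite-rank) graded components — this is where finite generation of $\De(\la)$ over $H_\al$ and Noetherianity of $H_{\al,R}$ (Theorem~\ref{TBasis}) are essential, so that projective resolutions by finitely generated projectives exist and the UCT as stated applies. The conceptual heart, though, is simply that Theorem~A collapses $\Hom$ to its minimum possible size, and a torsion class in $\Ext^1$ would manifest as an \emph{excess} in $\DIM\Ext^1_{H_{\al,F}}$ over $\DIM\Ext^1_{H_{\al,K}}$ that, by exactness of the six-term sequence above, could only be absorbed by a compensating deficiency in $\Hom$ over $F$ versus $K$ — and Proposition~\ref{PRTriv}(ii) rules that out.
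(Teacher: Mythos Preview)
Your proposal circles the correct idea several times without landing on it, and your opening strategy is aimed at the wrong target.

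You begin by declaring that it suffices to prove $\DIM \Ext^1_{H_{\al,F}}(\De(\la)_F,\De(\mu)_F)=\DIM \Ext^1_{H_{\al,K}}(\De(\la)_K,\De(\mu)_K)$. This is \emph{not} sufficient, and in fact it is not known: the Universal Coefficient Theorem in degree~$1$ says
\[
\DIM\Ext^1_F - \DIM\Ext^1_K \;=\; \RANK(\Ext^1_R)^\Torsion + \RANK(\Ext^2_R)^\Torsion,
\]
so the equality you propose to prove would force \emph{both} $\Ext^1_R$ and $\Ext^2_R$ to be torsion-free, which is precisely the open condition in Theorem~C, not a known fact. Your later ``Main obstacle'' paragraph repeats this confusion when it asserts that torsion in $\Ext^1_R$ manifests as excess in $\DIM\Ext^1_F$ over $\DIM\Ext^1_K$: it does, but so does torsion in $\Ext^2_R$, and you have no way to separate the two from that comparison.

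The paper's proof is a single application of the Universal Coefficient Theorem in degree~$j=0$ (not $j=1$):
\[
0\to \Hom_{H_{\al,R}}(\De(\la)_R,\De(\mu)_R)\otimes_R F\to \Hom_{H_{\al,F}}(\De(\la)_F,\De(\mu)_F)\to \Tor^R_1\bigl(\Ext^1_{H_{\al,R}}(\De(\la)_R,\De(\mu)_R),F\bigr)\to 0.
\]
For $\la\neq\mu$ the middle term vanishes by Theorem~A, so the $\Tor_1$ term vanishes, hence $\Ext^1_R$ is torsion-free; for $\la=\mu$ one quotes Proposition~\ref{PRTriv}(iii). That is the whole argument. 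You actually brush against this (``$H^0(C_\bullet)=\Hom$ being as small as Theorem~A dictates forces $H^1(C_\bullet)$ to be free''), but then abandon it for the syzygy route. Your syzygy approach can be pushed through, but only by applying the UCT \emph{again} to $\Omega\De(\la)_R$ and carefully cancelling the $\Ext^2_R$-torsion contribution from both sides --- a detour that is entirely avoided by working at $j=0$ from the start.
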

\begin{proof}
By Proposition~\ref{PRTriv}, we may assume that $\la< \mu$. 
By the Universal Coefficient Theorem, there is an exact sequence 
\begin{align*}
0
&\rightarrow \Hom_{H_{\al,R}}(\De(\la)_R,\De(\mu)_R)\otimes_R F\rightarrow \Hom_{H_{\al,F}}(\De(\la)_F,\De(\mu)_F)
\\
&\rightarrow \Tor^R_1(\Ext^{1}_{H_{\al,R}}(\De(\la)_R,\De(\mu)_R),F)\rightarrow 0.
\end{align*}
By  Theorem A, the middle term vanishes, so the third term also vanishes, which implies the theorem. 
\end{proof}

We will need the following generalization:

\begin{Corollary}\label{CTorsFreeGen}%{\rm \cite{}}%{\bf ()}
Let $\al\in Q^+$, $\mu\in\KP(\al)$, and $V$ be an $H_{\al,R}$-module with a finite $\De$-filtration, all of whose subfactors are of the form $\simeq \De(\la)_R$ for $\la\neq \mu$. Then $\Ext^1_{H_{\al,R}}(V,\De(\mu)_R)$ is torsion-free. 
\end{Corollary}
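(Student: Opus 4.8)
The plan is to reduce the corollary to Theorem~\ref{TTorsFree} by induction on the length of the $\De$-filtration of $V$. The base case, when $V$ has a filtration of length one, i.e. $V\simeq\De(\la)_R$ for some $\la\neq\mu$, is precisely Theorem~\ref{TTorsFree}. So suppose $V$ has a $\De$-filtration of length $N>1$, and pick a submodule $V'\subseteq V$ appearing in the filtration, so that there is a short exact sequence
\begin{equation*}
0\to V'\to V\to V''\to 0
\end{equation*}
in which $V'$ has a $\De$-filtration of length $N-1$ with subfactors $\simeq\De(\la)_R$ for $\la\neq\mu$, and $V''\simeq\De(\nu)_R$ for some $\nu\neq\mu$. (One can arrange this either by taking $V'$ to be the first term of the filtration or the last, whichever is convenient; both choices are fine since every subfactor is of the required form.) Both $V'$ and $V''$ are free as $R$-modules, being iterated extensions of the $R$-free modules $\De(\la)_R$, so the Universal Coefficient Theorem and Lemma~\ref{LExtExt} apply to them.

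Next I would apply $\Hom_{H_{\al,R}}(-,\De(\mu)_R)$ to this short exact sequence to obtain the long exact sequence
\begin{equation*}
\cdots\to \Ext^1_{H_{\al,R}}(V'',\De(\mu)_R)\to \Ext^1_{H_{\al,R}}(V,\De(\mu)_R)\to \Ext^1_{H_{\al,R}}(V',\De(\mu)_R)\to\cdots.
\end{equation*}
The outer two terms are torsion-free: the first by Theorem~\ref{TTorsFree} (since $V''\simeq\De(\nu)_R$ with $\nu\neq\mu$), and the last by the inductive hypothesis applied to $V'$. Since $\Ext^1_{H_{\al,R}}(V,\De(\mu)_R)$ sits inside an exact sequence with a torsion-free module on its right, its torsion submodule maps to zero there, hence is contained in the image of $\Ext^1_{H_{\al,R}}(V'',\De(\mu)_R)$; but that image is a quotient of a torsion-free $R$-module only after we note $R=\Z_p$ is a PID, so submodules of free modules are free—more directly, the torsion submodule of $\Ext^1_{H_{\al,R}}(V,\De(\mu)_R)$ injects, via the connecting map being zero on it, into the torsion-free module $\Ext^1_{H_{\al,R}}(V',\De(\mu)_R)$ once we check it lands in the kernel appropriately.

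Let me state this last diagram-chase cleanly. Write $A=\Ext^1_{H_{\al,R}}(V'',\De(\mu)_R)$, $B=\Ext^1_{H_{\al,R}}(V,\De(\mu)_R)$, $C=\Ext^1_{H_{\al,R}}(V',\De(\mu)_R)$, with $A$ and $C$ torsion-free and $A\xrightarrow{f} B\xrightarrow{g} C$ exact. If $b\in B^\Torsion$, then $g(b)\in C^\Torsion=0$, so $b=f(a)$ for some $a\in A$; since $R$ is a PID, $A^\Torsion=0$ and the preimage $f^{-1}(B^\Torsion)$ is a submodule of $A$ mapping onto $B^\Torsion$—but $B^\Torsion$ is torsion while $A/\ker f\hookrightarrow B$ need not be torsion-free a priori, so instead argue: $n\cdot b=0$ for some $n\neq0$ gives $f(n a)=0$, i.e. $na\in\ker f=\im(\Hom(V',\De(\mu)_R)\to A)$; chasing one step further to the right in the long exact sequence $\Hom(V',\De(\mu)_R)\to A\to B$ and using that $\Hom_{H_{\al,R}}(V',\De(\mu)_R)$ has no contribution forcing torsion (it is $R$-free), one concludes $b=0$. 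I expect this final bookkeeping to be the only delicate point; the essential content is entirely in Theorem~\ref{TTorsFree}, and the induction merely propagates torsion-freeness along short exact sequences, using that $R$ is a PID so that an extension of torsion-free modules by torsion-free modules is torsion-free and a submodule of a torsion-free module is torsion-free. The main obstacle is thus purely organizational: making sure the $\De$-filtration of $V'$ still has all subfactors away from $\mu$ (which is immediate from the hypothesis on $V$) so that the inductive hypothesis genuinely applies.
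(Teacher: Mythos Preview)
Your inductive setup and long exact sequence are exactly the paper's approach, but there is a genuine gap in the final step. Having $A\xrightarrow{f} B\xrightarrow{g} C$ exact with $A$ and $C$ torsion-free does \emph{not} imply $B$ is torsion-free: take $A=\Z_p$, $B=\Z_p/p\Z_p$, $C=0$, with $f$ the quotient map. Your diagram chase correctly reaches $na\in\ker f=\im\big(\Hom_{H_{\al,R}}(V',\De(\mu)_R)\to A\big)$, but the conclusion ``using that $\Hom$ is $R$-free \dots\ one concludes $b=0$'' is not valid: freeness of the previous term says nothing about torsion in the quotient $A/\ker f$.

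What is actually needed --- and what the paper invokes --- is that the preceding term $\Hom_{H_{\al,R}}(V',\De(\mu)_R)$ is \emph{zero}, not merely free. This follows from Theorem~A via Proposition~\ref{PRTriv}(i): each subfactor of $V'$ is $\simeq\De(\la)_R$ with $\la\neq\mu$, so $\Hom_{H_{\al,R}}(\De(\la)_R,\De(\mu)_R)=0$, and an easy induction on the filtration gives $\Hom_{H_{\al,R}}(V',\De(\mu)_R)=0$. Then $f\colon A\hookrightarrow B$ is injective, so $B$ is an extension of $\im g\subseteq C$ (torsion-free, since $C$ is) by $A$ (torsion-free), hence torsion-free. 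Once you insert this one sentence, your proof is complete and matches the paper's.
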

\begin{proof}
Apply induction on the length of the $\De$-filtration, the induction base coming from Theorem~\ref{TTorsFree}. If the filtration has length greater than $1$, we have an exact sequence  $0\to V_1\to V\to V_2\to 0$, such that the inductive assumption apples to  $V_1,V_2$. Then we get a long exact sequence 
\begin{align*}
&\Hom_{H_{\al,R}}(V_1,\De(\mu)_R)\to
 \Ext^1_{H_{\al,R}}(V_2,\De(\mu)_R) 
 \\
\to
&
\Ext^1_{H_{\al,R}}(V,\De(\mu)_R)\to \Ext^1_{H_{\al,R}}(V_1,\De(\mu)_R).
\end{align*}
By  Theorem A, the first term vanishes. By the inductive assumption, the second and fourth terms are torsion-free. Hence so is the third term. 
\end{proof}

While we have just proved that there is no torsion in $\Ext^1_{H_{\al,R}}(\De(\la)_R,\De(\mu)_R)$, the following result reveals the importance of torsion in $\Ext^2$-groups.

\begin{Corollary} %\label{}%{\rm \cite{}}%{\bf ()}
Let $\al\in Q^+$ and $\la,\mu\in\KP(\al)$. 
We have 
\begin{align*}
& \DIM \Ext^1_{H_{\al,F}}(\De(\la)_F,\De(\mu)_F)
\\
=\,\,& \DIM\Ext^1_{H_{\al,K}}(\De(\la)_K,\De(\mu)_K)+\RANK \Ext^2_{H_{\al,R}}(\De(\la)_R,\De(\mu)_R)^\Torsion.
\end{align*}
In particular, 
$$\DIM \Ext^1_{H_{\al,F}}(\De(\la)_F,\De(\mu)_F)=\DIM\Ext^1_{H_{\al,K}}(\De(\la)_K,\De(\mu)_K)$$ if and only if $\Ext^2_{H_{\al,R}}(\De(\la)_R,\De(\mu)_R)$ is torsion-free. 
\end{Corollary}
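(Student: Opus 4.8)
The plan is to deduce this Corollary directly from the Universal Coefficient Theorem together with the vanishing results already in hand. The main tool is the exact sequence of the Universal Coefficient Theorem with $k=F$ and $j=1$:
\begin{align*}
0 &\to \Ext^1_{H_{\al,R}}(\De(\la)_R,\De(\mu)_R)\otimes_R F \to \Ext^1_{H_{\al,F}}(\De(\la)_F,\De(\mu)_F)\\
&\to \Tor^R_1\big(\Ext^2_{H_{\al,R}}(\De(\la)_R,\De(\mu)_R),F\big)\to 0.
\end{align*}
First I would observe that, since $R=\Z_p$ is a discrete valuation ring, every finitely generated graded component of an $H_{\al,R}$-module splits as free part plus torsion, and for such a module $M$ one has $\DIM(M\otimes_R F)=\RANK M+\RANK M^\Torsion$ and $\DIM\Tor^R_1(M,F)=\RANK M^\Torsion$, all computed degree by degree (each graded component being finitely generated — this uses that $\De(\la)_R$ has a finitely generated projective resolution, so the Ext-groups have finitely generated graded components, bounded below). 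Combining these identities with the displayed short exact sequence gives
$$
\DIM \Ext^1_{H_{\al,F}}(\De(\la)_F,\De(\mu)_F)=\DIM\big(\Ext^1_{H_{\al,R}}(\De(\la)_R,\De(\mu)_R)\otimes_R F\big)+\RANK \Ext^2_{H_{\al,R}}(\De(\la)_R,\De(\mu)_R)^\Torsion.
$$

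Next I would handle the first term on the right. By Theorem~\ref{TTorsFree}, $\Ext^1_{H_{\al,R}}(\De(\la)_R,\De(\mu)_R)$ is torsion-free, hence free over $R$ in each graded degree, so $\DIM\big(\Ext^1_{H_{\al,R}}(\De(\la)_R,\De(\mu)_R)\otimes_R F\big)=\RANK \Ext^1_{H_{\al,R}}(\De(\la)_R,\De(\mu)_R)$. On the other hand, the "in particular" clause of the Universal Coefficient Theorem gives $\Ext^1_{H_{\al,R}}(\De(\la)_R,\De(\mu)_R)\otimes_R K\cong \Ext^1_{H_{\al,K}}(\De(\la)_K,\De(\mu)_K)$ (using $\De(\la)_R\otimes_R K\cong\De(\la)_K$, since $\De(\la)_R$ is a universal $R$-form), so the same rank also equals $\DIM\Ext^1_{H_{\al,K}}(\De(\la)_K,\De(\mu)_K)$. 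Substituting yields the displayed formula of the Corollary.

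Finally, for the "in particular" equivalence: the two graded dimensions on the left are equal if and only if $\RANK \Ext^2_{H_{\al,R}}(\De(\la)_R,\De(\mu)_R)^\Torsion=0$, i.e. if and only if (that graded component of) the torsion submodule vanishes in every degree, which is exactly the assertion that $\Ext^2_{H_{\al,R}}(\De(\la)_R,\De(\mu)_R)$ is torsion-free. I do not anticipate a serious obstacle here; the only point requiring a little care is the bookkeeping that the graded components of the Ext-groups are finitely generated over $R$ and vanish for $d\ll 0$, so that $\RANK$ and $\DIM$ are well-defined and the DVR decomposition applies degree by degree — this follows from the existence of a finitely generated projective resolution of $\De(\la)_R$ (the algebra $H_{\al,R}$ is Noetherian by the Basis Theorem) together with the bounded-below gradings.
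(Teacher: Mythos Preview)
Your proposal is correct and follows essentially the same route as the paper: both apply the Universal Coefficient Theorem with $k=F$ and $j=1$, invoke Theorem~\ref{TTorsFree} to identify $\DIM(\Ext^1_R\otimes_R F)$ with $\RANK\Ext^1_R=\DIM\Ext^1_K$, and use the equality $\DIM\Tor^R_1(\Ext^2_R,F)=\RANK(\Ext^2_R)^\Torsion$. Your additional remarks about the DVR decomposition and finite generation of graded components are sound and simply make explicit what the paper leaves implicit.
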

\begin{proof}
By the Universal Coefficient Theorem, there is an exact sequence 
\begin{align*}
0
&\rightarrow \Ext^1_{H_{\al,R}}(\De(\la)_R,\De(\mu)_R)\otimes_R F\rightarrow \Ext^1_{H_{\al,F}}(\De(\la)_F,\De(\mu)_F)
\\
&\rightarrow \Tor^R_1(\Ext^{2}_{H_{\al,R}}(\De(\la)_R,\De(\mu)_R),F)\rightarrow 0
\end{align*}
and an isomorphism 
$$
\Ext^1_{H_{\al,R}}(\De(\la)_R,\De(\mu)_R)\otimes_R K\cong  \Ext^1_{H_{\al,K}}(\De(\la)_K,\De(\mu)_K).
$$
The last isomorphism and Theorem~\ref{TTorsFree} imply 
$$\DIM \Ext^1_{H_{\al,K}}(\De(\la)_K,\De(\mu)_K)=\RANK \Ext^1_{H_{\al,R}}(\De(\la)_R,\De(\mu)_R).$$
On the other hand, 
$$
\RANK \Ext^{2}_{H_{\al,R}}(\De(\la)_R,\De(\mu)_R)^\Torsion=\DIM \Tor^R_1(\Ext^{2}_{H_{\al,R}}(\De(\la)_R,\De(\mu)_R),F),
$$
so the result now follows from the exactness of the first sequence. 
\end{proof}

\begin{Remark} \label{RBears} %{\rm \cite{}}%{\bf ()}
{\rm 
By Theorem~\ref{TTorsFree}, lack of torsion in $\Ext^2_{H_{\al,R}}(\De(\la)_R,\De(\mu)_R)$ is equivalent to the fact that the extension groups $\Ext^1_{H_{\al}}(\De(\la),\De(\mu))$ have the same graded dimension in characteristic $0$ and $p$. This is relevant for Problem~\ref{Problem}. However, we do not understand the {\em precise} connection between Problem~\ref{Problem} and lack of torsion in the groups $\Ext^2_{H_{\al,R}}(\De(\la)_R,\De(\mu)_R)$. For example, we do not know if such lack of torsion for all $\la,\mu$ implies (or is equivalent to)  James' Conjecture having positive solution. In the next section we establish a different statement of that nature. 
Set 
$$\De_k:=\bigoplus_{\la\in\KP(\al)}\De(\la)_k.$$
By the Universal Coefficient Theorem, all groups $\Ext^j_{H_{\al,R}}(\De(\la)_R,\De(\mu)_R)$ are torsion free if and only if the dimension of the $k$-algebras $\Ext^\bullet_{H_{\al,k}}(\De_k,\De_k)$ is the same for $k=K$ and $k=F$, and 
$$\Ext^\bullet_{H_{\al,k}}(\De_k,\De_k)\cong \Ext^\bullet_{H_{\al,R}}(\De_R,\De_R)\otimes_Rk$$ 
for $k=K$ and $F$. 
We do not know if James' Conjecture has positive solution under the assumption that {\em all} groups 
$\Ext^j_{H_{\al,R}}(\De(\la)_R,\De(\mu)_R)$  
are torsion-free. 
%However, a priori this might still not be sufficient for James' Conjecture to have a positive solution, if the $A_\infty$-structure on  $\Ext^\bullet_{H_{\al,k}}(\De_k,\De_k)$ is not defined over $R$.
}
\end{Remark}

\subsection{Integral forms of projective modules in characteristic zero}

Recall that by lifting idempotents, we have constructed projective $R$-forms $P(\la)_R$ of the projective indecomposable modules $P(\la)_F$. % and universal $R$-forms $\De(\la)_R$ of the standard modules. 
Our next goal is to construct some interesting $R$-forms of the projective modules $P(\la)_K$. As we cannot denote them $P(\la)_R$, we will have to use the notation  $Q(\la)_R$. We will construct $Q(\la)_R$ using the usual `universal extension procedure' applied to universal $R$-forms of the standard modules, but in our `infinite dimensional integral situation' we need to be rather careful.  We begin with some lemmas. 
%We begin with $\De(\la)_R$. 

\begin{Lemma} \label{LPExt} %{\rm \cite{}}%{\bf ()}
Let $k$ be a field and $V\in\mod{{H_{\al,k}}}$ have the following properties:
\begin{enumerate}
\item[{\rm (i)}] $V$ is indecomposable;
\item[{\rm (ii)}] $V$ has a finite $\De$-filtration with the top factor $\De(\la)_k$; 
\item[{\rm (iii)}] $\Ext^1_{H_{\al,k}}(V,\De(\mu)_k)=0$ for all $\mu\in\KP(\al)$.  
\end{enumerate}
Then $V\cong P(\la)_k$.  
\end{Lemma}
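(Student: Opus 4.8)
The plan is to show that the standard projective cover map $P(\la)_k \twoheadrightarrow L(\la)_k$ lifts to an isomorphism $P(\la)_k \cong V$, using the two defining features of $P(\la)_k$: it is projective, and it carries a $\De$-filtration with top factor $\De(\la)_k$ whose multiplicities are the decomposition numbers (Theorem~\ref{TStand}(iv),(v)). First I would observe that since $V$ has a $\De$-filtration with top factor $\De(\la)_k$ and $\De(\la)_k$ has simple head $L(\la)_k$ (Theorem~\ref{TStand}(i)), the module $V$ has $L(\la)_k$ in its head; pick a surjection $\pi\colon V \twoheadrightarrow L(\la)_k$. By projectivity of $P(\la)_k$, there is a map $f\colon P(\la)_k \to V$ lifting $\pi$ along $V \twoheadrightarrow L(\la)_k$, and since the image of $f$ surjects onto the head $L(\la)_k$ of $V$ (Nakayama), $f$ is itself surjective.

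Next I would prove that $f$ is injective by comparing $\De$-filtration multiplicities. Hypothesis~(iii) says $\Ext^1_{H_{\al,k}}(V,\De(\mu)_k)=0$ for all $\mu$, so applying $\Hom(-,\De(\mu)_k)$ to a $\De$-filtration of $P(\la)_k$ and using that $\Ext^1$ against $V$ vanishes while $\Ext^1$ against each $\De(\nu)_k$ behaves as in Theorem~\ref{TStand}(iii), one gets that $\Ker f$ is itself $\De$-filtered; alternatively, and more cleanly, use the standard homological criterion: a module with a $\De$-filtration is characterized by its graded $\Hom$'s against the costandard (or, here, by vanishing of higher Ext), and the exact sequence $0\to \Ker f\to P(\la)_k\to V\to 0$ then forces $\Ker f$ to have a $\De$-filtration. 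Now compare graded $\De$-multiplicities: by Theorem~\ref{TStand}(v), $(P(\la)_k:\De(\mu))_q = d_{\mu,\la}(q)$, and in particular $(P(\la)_k:\De(\la))_q = d_{\la,\la}(q)=1$, so the top $\De(\la)$ of $P(\la)_k$ maps isomorphically to the top $\De(\la)$ of $V$; hence $\Ker f$ has no $\De(\la)$-subquotient, so $(\Ker f : \De(\mu))_q \neq 0$ forces $\mu > \la$. But every $\De$-subquotient of $V$ satisfies $\mu \geq \la$ is false in general — rather, I should argue the other way: the factors of $V$ are among $\De(\mu)_k$ with (by (ii) and Theorem~\ref{TStand}) $\mu\geq\la$, so $(V:\De(\mu))_q \leq (P(\la)_k:\De(\mu))_q = d_{\mu,\la}(q)$ for all $\mu$, with equality at $\mu=\la$; if $\Ker f\neq 0$ there is some $\mu>\la$ with $(\Ker f:\De(\mu))_q\neq 0$, i.e. strictly $(V:\De(\mu))_q < d_{\mu,\la}(q)$.

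The final step is to rule this out. Here I would use indecomposability of $V$ together with a counting/character argument: in the Grothendieck group, $[\De(\mu)_k] = \sum_{\nu\leq\mu} d_{\mu,\nu}^{\mathrm{op}}[L(\nu)_k]$-type formulas combined with $[V]=\sum_\mu (V:\De(\mu))_q[\De(\mu)_k]$ show that a defect in some $\De(\mu)$-multiplicity, $\mu>\la$, would make $[V]$ fail to be the class $[P(\la)_k]$; but since $V$ is $\De$-filtered and $\Ext^1(V,\De(-))=0$, BGG-reciprocity-style arguments give $(V:\De(\mu))_q = [\,\text{multiplicity of }L(\la)\text{ in }\nabla(\mu)\,]$-analogue, which pins the numbers down to be exactly $d_{\mu,\la}(q)$. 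The cleanest route, which I expect to be the one the authors take, is: projectivity of $P(\la)_k$ gives that $f$ is split as soon as $\Ext^1_{H_{\al,k}}(V,\Ker f)=0$; since $\Ker f$ is $\De$-filtered by subfactors $\De(\mu)_k$ with $\mu>\la$, hypothesis~(iii) and the long exact sequence give $\Ext^1(V,\Ker f)=0$; therefore $P(\la)_k \cong V \oplus \Ker f$, and indecomposability of $V$ forces $\Ker f = 0$ or $V=0$, the latter excluded. Hence $f$ is an isomorphism.

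The main obstacle is the bookkeeping needed to show $\Ker f$ inherits a $\De$-filtration with subfactors strictly above $\la$ — this is where one must invoke that $P(\la)_k$ is $\De$-filtered with top $\De(\la)_k$ appearing with multiplicity one (Theorem~\ref{TStand}(iv),(v)) so that $f$ restricts to an isomorphism on the top layer — and then to verify the $\Ext^1$-vanishing that makes the sequence $0\to\Ker f\to P(\la)_k\to V\to 0$ split; once split, indecomposability of $V$ closes the argument immediately.
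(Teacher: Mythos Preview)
Your core idea---produce a short exact sequence $0\to N\to P\to V\to 0$ with $P$ projective, show $N$ is $\De$-filtered, and apply hypothesis (iii) to split it---is exactly the paper's, but your execution has a gap and is more roundabout than necessary.

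The gap is the surjectivity of $f\colon P(\la)_k\to V$. You correctly observe that $L(\la)_k$ lies in the head of $V$, but then slide to ``the head $L(\la)_k$ of $V$'', i.e.\ that the head is simple. Hypothesis (ii) only says the \emph{top} $\De$-factor is $\De(\la)_k$; it does not say the remaining factors are indexed by $\mu>\la$, so a priori the head of $V$ may be larger than $L(\la)_k$, and your Nakayama step does not give surjectivity. Without surjectivity you have no short exact sequence $0\to\Ker f\to P(\la)_k\to V\to 0$ to split. (There is also a slip at the very end: from $P(\la)_k\cong V\oplus\Ker f$ you would need indecomposability of $P(\la)_k$, not of $V$; the conclusion survives because $P(\la)_k$ is indeed indecomposable, but the reason you give is the wrong one.)

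The paper sidesteps both issues by not aiming at $P(\la)_k$ at all. Take any surjection $P\twoheadrightarrow V$ with $P$ a finite direct sum of indecomposable projectives; the kernel $M$ is $\De$-filtered by \cite[Corollary~7.10(i)]{Kdonkin}, so $\Ext^1(V,M)=0$ by (iii), the sequence splits, and $V$ is a direct summand of $P$, hence projective. Indecomposability of $V$ then gives $V\cong q^dP(\mu)_k$ for some $\mu$ and some shift $d$, and (ii) forces $\mu=\la$ and $d=0$. No surjectivity onto a prescribed projective and no multiplicity bookkeeping are needed.
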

\begin{proof}
We have a short exact sequence 
$
0\to M \to P\to V\to 0
$, where 
$P$ is a finite direct sum of indecomposable projective modules. 
By \cite[Corollary 7.10(i)]{Kdonkin}, $M$ has a finite $\De$-filtration. Now, by property (iii), the short exact sequence splits. Hence $V$ is projective. As it is indecomposable, it must be of the form $q^dP(\mu)$. By the property (ii), $\la=\mu$ and $d=0$.
\end{proof}

For $\la\in\KP(\al)$ and $k\in\{F,K,R\}$, we denote by $B_{\la,k}$ the endomorphism algebra $\End_{H_{\al,k}}(\De(\la)_k)^\op$. 
Then $\De(\la)_k$ is naturally a right $B_{\la,k}$-module. 
% (actually, it is known that $B_{\la,k}$ is commutative, so we will ignore the right/left issue).  
We will need to know that this $B_{\la,k}$-module is finitely generated. In fact, we will prove that it is finite rank free. First of all, this is known over a field:  

%Let $k=K$ or $F$, and $\la=(\la_1^{m_1},\dots,\la_s^{m_s})$ for positive roots $\la_1>\dots>\la_s$. 

\begin{Lemma} \label{LDeBField} %{\rm \cite{}}%{\bf ()}
Let $\la\in\KP(\al)$ and $k$ be a field. Then:
\begin{enumerate}
\item[{\rm (i)}] $B_{\la, k}$ is a commutative polynomial algebra in finitely many variables of positive degrees. 
\item[{\rm (ii)}] Let $N_{\la,k}$ be the ideal in $B_{\la,k}$ spanned by all monomials of positive degree, and $M:=\De(\la)_kN_{\la,k}$. Then $\De(\la)_k/M\cong \bar\De(\la)_k$, see the notation (\ref{EBarDe}).
\item[{\rm (iii)}] Let $v_1,\dots,v_N\in \De(\la)_k$ be such that $\{v_1+M,\dots,v_N+M\}$ is a $k$-basis of $\De(\la)_k/M$. Then $\{v_1,\dots,v_N\}$ is a basis of $\De(\la)_k$ as a $B_{\la,k}$-module. 
\end{enumerate} 
\end{Lemma}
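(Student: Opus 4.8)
\textbf{Proof plan for Lemma~\ref{LDeBField}.}

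The plan is to prove the three parts in order, using the explicit structure of standard modules as induction products of cuspidal semistandard modules together with Theorem~\ref{TInjGeneral}. Write $\la=(\la_1^{m_1},\dots,\la_s^{m_s})$ with $\la_1>\dots>\la_s$. For part (i), Theorem~\ref{TInjGeneral} already identifies $B_{\la,k}=\End_{H_{\al,k}}(\De(\la)_k)^\op$ with $\La_{\la_1,m_1}\otimes\dots\otimes\La_{\la_s,m_s}$, and each $\La_{\la_t,m_t}=k[X_1,\dots,X_{m_t}]^{S_{m_t}}$ is a polynomial algebra in $m_t$ homogeneous generators of positive degree (the elementary symmetric functions in the $X_r$, which sit in degrees $2d_{\la_t},4d_{\la_t},\dots$). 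A tensor product of such algebras over $k$ is again a polynomial algebra on the union of the generating sets, all of positive degree; this gives (i). I would note in passing that $B_{\la,k}$ is positively graded with one-dimensional degree-zero part, so $N_{\la,k}$ is precisely its graded Jacobson radical and $B_{\la,k}/N_{\la,k}\cong k$.

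For part (ii), the idea is to reduce to the cuspidal-power case and then compute. First, using (\ref{EDelta}) and the fact that $\De(\la)_k=\De(\la_1^{m_1})_k\circ\dots\circ\De(\la_s^{m_s})_k$ with $B_{\la,k}$ acting factor-wise via (\ref{ELaGen}), one sees that $M=\De(\la)_kN_{\la,k}$ is the sum of the subspaces obtained by applying $N_{\la_t,m_t}$ to the $t$-th tensor factor, so $\De(\la)_k/M\cong (\De(\la_1^{m_1})_k/M_1)\circ\dots\circ(\De(\la_s^{m_s})_k/M_s)$ where $M_t:=\De(\la_t^{m_t})_kN_{\la_t,k}$ — here one uses that $\circ$ is exact and right-exact in each argument so it commutes with taking these quotients. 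Thus it suffices to treat $\la=(\be^m)$ for a single positive root $\be$, where $B_{\la,k}=\La_{\be,m}$ acts on $\De(\be^m)_k=\De(\be)^{\circ m}e_m$. Here I would invoke the nilHecke description: $\De(\be)^{\circ m}$ is free as a right $NH_m$-module... more carefully, quotienting $\De(\be)^{\circ m}$ by the action of the positive-degree part of $Z(NH_m)=\La_{\be,m}$ and using $e_mNH_me_m\cong\La_{\be,m}$ from (\ref{EEMZ}), one computes the graded dimension of $\De(\be^m)_k/M$ and matches it against $\CH\bar\De(\be^m)_k=q^{s(\be^m)}\CH(L(\be)^{\circ m})$; combined with the fact (Theorem~\ref{TDeSemiCusp}(ii)) that $\De(\be^m)_k$ is the projective cover of $L(\be^m)_k$ in its category and that $\bar\De(\be^m)_k$ has simple head $L(\be^m)_k$, one gets a surjection $\De(\be^m)_k/M\onto\bar\De(\be^m)_k$ which is an isomorphism by the dimension count. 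Alternatively, and perhaps more cleanly, one observes directly from the short exact sequences defining $\De_m(\be)$ and from (\ref{EDeAlM}) that modding out by the action of $x_\be$ on each factor recovers $L(\be)^{\circ m}$ up to the shift $s(\be^m)$.

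For part (iii), the plan is a standard graded-Nakayama argument. Since $B_{\la,k}$ is a positively graded polynomial algebra with $B_{\la,k}/N_{\la,k}\cong k$ and $\De(\la)_k$ is a graded $B_{\la,k}$-module bounded below in degree (by Corollary~\ref{LGrDimDe} and (\ref{EDelta}), each graded component of $\De(\la)_k$ is finite-dimensional and vanishes in degrees $\ll 0$), lifting a homogeneous $k$-basis $\{v_1+M,\dots,v_N+M\}$ of $\De(\la)_k/M=\De(\la)_k\otimes_{B_{\la,k}}k$ gives a set that generates $\De(\la)_k$ over $B_{\la,k}$ by the graded Nakayama lemma. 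For freeness: a graded module over a graded polynomial ring which is bounded below is free iff it is flat iff $\Tor^{B_{\la,k}}_1(\De(\la)_k,k)=0$; I would verify this by comparing graded dimensions, using $\DIM B_{\la,k}=\prod_t\prod_{r=1}^{m_t}1/(1-q_{\la_t}^{2r})$ from the discussion before Theorem~\ref{TDeSemiCusp} together with $\DIM\De(\la)_k$ (obtained from Theorem~\ref{TDeSemiCusp}(i) and (\ref{EDelta})) and $\DIM\bar\De(\la)_k=N$ counted with grading — the identity $\DIM\De(\la)_k=\DIM B_{\la,k}\cdot\DIM(\De(\la)_k/M)$ forces the surjection $B_{\la,k}^{\oplus}\,\{v_j\}\onto\De(\la)_k$ to be injective.

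The main obstacle I anticipate is part (ii): identifying the quotient $\De(\la)_k/M$ with the reduced standard module $\bar\De(\la)_k$ requires genuinely knowing how the symmetric-function subalgebra $\La_{\la,m}$ sits inside $\End$ and what killing its augmentation ideal does — this is where one must unwind the nilHecke-algebra construction of $e_m$ and the short exact sequences of Theorem~\ref{TDelta}(iv), rather than citing a black box. The other two parts are then essentially formal consequences of graded commutative algebra.
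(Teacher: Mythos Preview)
Your proposal is correct. The paper's own proof is simply a pair of citations: part (i) is Theorem~\ref{TInjGeneral}, and parts (ii) and (iii) are deferred to \cite[Proposition~5.7]{Kdonkin} without further argument. So your route is not so much \emph{different} as it is \emph{an actual proof} in place of a reference; what you sketch is essentially what one would do to unpack that citation using only material already developed in this paper.

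A few remarks on execution. For (ii), your reduction to the cuspidal-power case via exactness of $\circ$ is fine; the cleanest way to finish is the variant you call ``alternatively'': since $\bar\De(\be^m)\simeq L(\be^m)$ is irreducible, every positive-degree element of $B_{\be^m,k}$ acts as zero on it, so the surjection $\De(\be^m)\onto\bar\De(\be^m)$ (coming from Theorem~\ref{TDeSemiCusp}(ii)) factors through $\De(\be^m)/M$, and the dimension identity $\DIM\De(\be^m)=\DIM\La_{\be,m}\cdot\DIM L(\be^m)$ from Theorem~\ref{TDeSemiCusp}(i) makes it an isomorphism. This avoids having to unwind $e_m$ explicitly. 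For (iii), your graded-Nakayama plus Hilbert-series argument is exactly right; you can also shortcut freeness by observing that $\De(\be)^{\circ m}$ is free over $k[X_1,\dots,X_m]$ (each $X_r$ acts via the injective $x_\be$ on a factor), hence free over the symmetric subalgebra $\La_{\be,m}$, and $\De(\be^m)$ is a direct summand, so projective, hence free since $\La_{\be,m}$ is graded local --- then freeness for general $\la$ follows by tensoring and inducing. The degree shifts $s(\la)$ need to be tracked at the end to get an honest isomorphism rather than one up to shift, but that is bookkeeping.
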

\begin{proof}
For (i) see Theorem~\ref{TInjGeneral}. For (ii) and (iii), see \cite[Proposition 5.7]{Kdonkin}.
\end{proof}

The following general lemma, whose proof is omitted, will help us to transfer the result of Lemma~\ref{LDeBField} from $K$ and $F$ to $R$:

\begin{Lemma} \label{LBasisRFK} %{\rm \cite{}}%{\bf ()}
Let $B_R$ be an $R$-algebra and $V_R$ be a $B_R$-module. Assume that $B_R$ and $V_R$ are free as $R$-modules. If $v_1,\dots,v_N\in V_R$ are such that $\{v_1\otimes 1_k,\dots, v_N\otimes 1_k\}$ is a basis of $V_R\otimes_R k$ as a $B_R\otimes_Rk$-module for $k=K$ and $F$, then $\{v_1,\dots, v_N\}$ is a basis of $V_R$ as a $B_R$-module. 
\end{Lemma}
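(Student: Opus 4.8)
The plan is to package the assertion as the statement that a single $B_R$-linear map is an isomorphism, and to verify bijectivity by reducing modulo $p$ and by inverting $p$. Concretely, let $e_1,\dots,e_N$ be the standard basis of the free $B_R$-module $B_R^{\oplus N}$ and form the $B_R$-module homomorphism
$$\phi\colon B_R^{\oplus N}\longrightarrow V_R,\qquad \phi(e_t)=v_t\quad(1\le t\le N).$$
Since ``$v_1,\dots,v_N$ is a $B_R$-basis of $V_R$'' is precisely the statement that $\phi$ is an isomorphism, it suffices to prove that $\phi$ is bijective. For any $R$-algebra $k$, applying $-\otimes_R k$ turns $\phi$ into the $(B_R\otimes_R k)$-linear map $(B_R\otimes_R k)^{\oplus N}\to V_R\otimes_R k$ sending $e_t\otimes 1$ to $v_t\otimes 1$, so the hypotheses say exactly that $\phi\otimes_R K$ and $\phi\otimes_R F$ are isomorphisms.

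First I would show that $\phi$ is injective. Because $B_R^{\oplus N}$ is free over the domain $R$, its submodule $\Ker\phi$ is torsion-free, hence injects into $\Ker\phi\otimes_R K$. Flatness of $K$ over $R$ gives $\Ker\phi\otimes_R K=\Ker(\phi\otimes_R K)=0$, so $\Ker\phi=0$.

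Next I would show that $\phi$ is surjective. Write $C:=\operatorname{Coker}\phi$ and apply $-\otimes_R F$ to $0\to B_R^{\oplus N}\xrightarrow{\phi}V_R\to C\to 0$. Since $B_R^{\oplus N}$ and $V_R$ are $R$-free, the long exact $\Tor^R(-,F)$-sequence collapses to $0\to \Tor^R_1(C,F)\to (B_R\otimes_R F)^{\oplus N}\xrightarrow{\phi\otimes_R F}V_R\otimes_R F\to C\otimes_R F\to 0$. As $\phi\otimes_R F$ is an isomorphism, $\Tor^R_1(C,F)=0$ and $C\otimes_R F=0$. Now $R=\Z_p$ is a discrete valuation ring with uniformizer $p$, and $\Tor^R_1(C,F)$ is the $p$-torsion submodule of $C$; its vanishing makes $C$ torsion-free, so $C$ injects into $C\otimes_R K=\operatorname{Coker}(\phi\otimes_R K)=0$. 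Hence $C=0$. Therefore $\phi$ is a bijective $B_R$-homomorphism, i.e.\ an isomorphism, which is the assertion of the lemma.

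I do not expect a genuine obstacle; the only steps needing care are the homological bookkeeping---remembering that $-\otimes_R K$ is exact while $-\otimes_R F$ is only right exact, which is why freeness of $B_R^{\oplus N}$ and $V_R$ (equivalently, vanishing of the relevant $\Tor^R_1$-terms) is what lets me control kernels after reduction modulo $p$---together with the standard identifications $\Tor^R_1(C,F)\cong C[p]$ and ``$p$-torsion-free $\Rightarrow$ torsion-free'' over the DVR $R$. If one instead prefers to stay in the graded category, the identical argument can be run in each graded degree, where all modules in sight have finite-rank free graded components, and the surjectivity step may alternatively be deduced from the graded Nakayama lemma.
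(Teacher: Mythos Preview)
Your argument is correct. The paper actually omits the proof of this lemma entirely (``The following general lemma, whose proof is omitted\ldots''), so there is nothing to compare against; your approach---packaging the claim as bijectivity of the evident $B_R$-linear map $B_R^{\oplus N}\to V_R$, killing the kernel by tensoring with $K$, and killing the cokernel via the $\Tor$ long exact sequence together with the identification $\Tor^R_1(C,F)\cong C[p]$ over the DVR $R=\Z_p$---is exactly the standard way to prove such a statement and fills the gap cleanly.
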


\begin{Lemma} \label{LDeB} %{\rm \cite{}}%{\bf ()}
Let $\la\in\KP(\al)$. As a $B_{\la,R}$-module, $\De(\la)_R$ is finite rank free. 
\end{Lemma}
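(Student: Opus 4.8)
The plan is to combine Lemma~\ref{LDeBField} (which handles the field cases $k=K$ and $k=F$) with Lemma~\ref{LBasisRFK} (the descent principle from $K$ and $F$ to $R$), after arranging that all the relevant objects are free $R$-modules and that the reduction of bases is genuinely compatible. The main point is to choose the vectors $v_1,\dots,v_N \in \De(\la)_R$ once and for all, and then check that their images simultaneously form $B_{\la,K}$- and $B_{\la,F}$-bases of $\De(\la)_K$ and $\De(\la)_F$.

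First I would check the $R$-freeness hypotheses needed for Lemma~\ref{LBasisRFK}. Since $\De(\la)_R$ is by construction a full lattice (all graded pieces are finite rank free over $R$), it is $R$-free. For $B_{\la,R} = \End_{H_{\al,R}}(\De(\la)_R)^\op$, I would invoke Proposition~\ref{PRTriv}(ii) together with the fact (from Theorem~\ref{TInjGeneral} applied over $K$ and $F$, via Lemma~\ref{LDeBField}(i)) that $\End_{H_{\al,k}}(\De(\la)_k)$ is a polynomial algebra, in particular $k$-free, with the same graded dimension (namely $\prod_t 1/\prod_{r=1}^{m_t}(1-q_{\la_t}^{2r})$) for $k=K$ and $k=F$. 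A standard argument then shows $B_{\la,R}$ is $R$-free of that graded rank and that $B_{\la,R}\otimes_R k \cong B_{\la,k}$ for $k = K, F$; one also needs $\Hom_{H_{\al,R}}(\De(\la)_R,\De(\la)_R)$ to be $R$-free, which follows from the Universal Coefficient Theorem exactly as in the proof of Proposition~\ref{PRTriv}(ii), the $\Tor_1$-term vanishing by Proposition~\ref{PRTriv}(iii).

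Next I would produce the vectors. Reducing the quotient construction of Lemma~\ref{LDeBField}(ii): let $N_{\la,R}$ be the augmentation ideal of $B_{\la,R}$ and $M_R := \De(\la)_R N_{\la,R}$. I claim $\De(\la)_R/M_R$ is $R$-free of finite rank $N$; indeed it reduces modulo $K$ and $F$ to $\De(\la)_k/\De(\la)_k N_{\la,k} \cong \bar\De(\la)_k$ (a finite-dimensional module), and since $\bar\De(\la)_K$ and $\bar\De(\la)_F$ have equal dimension — both equal to $\DIM L(\la)_K \cdot (\text{stuff})$; more directly, $\bar\De(\la)$ is an induction product of cuspidal irreducibles and Lemma~\ref{LRedCusp} plus Lemma~\ref{LIndScal} gives $\bar\De(\la)_R\otimes_R F\cong\bar\De(\la)_F$, so the ranks agree — the quotient $\De(\la)_R/M_R$ is $R$-free. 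Now pick $v_1,\dots,v_N\in\De(\la)_R$ lifting an $R$-basis of $\De(\la)_R/M_R$. For $k=K$ and $k=F$, the images $\{v_i\otimes 1_k\}$ reduce to a $k$-basis of $\De(\la)_k/\De(\la)_kN_{\la,k}$, so by Lemma~\ref{LDeBField}(iii) they form a $B_{\la,k}$-basis of $\De(\la)_k$. Lemma~\ref{LBasisRFK} then yields that $\{v_1,\dots,v_N\}$ is a $B_{\la,R}$-basis of $\De(\la)_R$, which is the assertion.

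The step I expect to be the main obstacle is verifying cleanly that $B_{\la,R}\otimes_R k \cong B_{\la,k}$ as $k$-algebras for $k=K,F$ and that $B_{\la,R}$ is $R$-free of the expected rank — this is where the "infinite-dimensional integral" care is needed, since $\De(\la)_R$ is infinite rank over $R$ and one must work graded degree by graded degree to apply the Universal Coefficient Theorem and a rank count. Once that identification is in place, together with $\De(\la)_R/M_R$ being $R$-free, the remaining deductions are formal applications of the lemmas quoted above.
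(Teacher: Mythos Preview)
Your proposal is correct and follows essentially the same route as the paper: use Proposition~\ref{PRTriv}(ii) to identify $B_{\la,R}\otimes_R k$ with $B_{\la,k}$, choose vectors in $\De(\la)_R$ that reduce to a $k$-basis of $\bar\De(\la)_k$ for both $k=K$ and $k=F$, apply Lemma~\ref{LDeBField}(iii) to get $B_{\la,k}$-bases over both fields, and then descend via Lemma~\ref{LBasisRFK}. The only cosmetic difference is that the paper works with the explicit cyclic generator $v$ from (\ref{EDeRCyclic}) and picks $h_1,\dots,h_N\in H_{\al,R}$ so that $\{h_tv+M\}$ is an $R$-basis of $\bar\De(\la)_R$, whereas you phrase this intrinsically in terms of the augmentation ideal $N_{\la,R}$ and the quotient $\De(\la)_R/\De(\la)_R N_{\la,R}$; both produce the same elements.
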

\begin{proof}
Let $\la=(\la_1^{m_1},\dots,\la_s^{m_s})$ for positive roots $\la_1>\dots>\la_s$. Choose $v=1_{(\la),K}\otimes v_1\otimes\dots\otimes v_s$ as in (\ref{EDeRCyclic}). There is a submodule $M\subset \De(\la)_K$ with $\De(\la)_K/M\cong \bar\De(\la)_K$. Pick $h_1,\dots,h_N\in H_{\al,R}$ such that $\{h_1v+M,\dots,h_Nv+M\}$ is an $R$-basis of $\bar\De(\la)_R=H_{\al,R}\cdot (v+M)$. By Lemma~\ref{LDeBField}, $\{h_1v\otimes 1_k,\dots,h_Nv\otimes 1_k\}$ is a $B_{\la,k}$-basis of $\De(\la)_R\otimes_R k$ for $k=K$ or $F$. Now apply Proposition~\ref{PRTriv}(ii) and Lemma~\ref{LBasisRFK}.
\end{proof}

\begin{Corollary} \label{CExtFG}%{\rm \cite{}}%{\bf ()}
Let $k\in\{F,K,R\}$, $V\in\mod{H_{\al,k}}$, $\la\in\KP(\al)$ and $j\in\Z_{\geq 0}$. Then $\Ext^j_{H_{\al,k}}(V,\De(\la)_k)$ is finitely generated as a $B_{\la,k}$-module.  
\end{Corollary}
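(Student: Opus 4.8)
The plan is to reduce the statement to the case $V = H_{\al,k}$ and then to leverage Lemma~\ref{LDeB}, which says $\De(\la)_k$ is a finite rank free module over the endomorphism ring $B_{\la,k}$. First I would observe that since $k$ is Noetherian and $V$ is finitely generated, $V$ admits a resolution $\cdots \to P^1 \to P^0 \to V \to 0$ by finitely generated projective $H_{\al,k}$-modules, each $P^i$ a direct summand of a finite direct sum of copies of (grading shifts of) $H_{\al,k}$. Applying $\Hom_{H_{\al,k}}(-,\De(\la)_k)$ gives a cochain complex whose $j$-th cohomology is $\Ext^j_{H_{\al,k}}(V,\De(\la)_k)$. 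Each term $\Hom_{H_{\al,k}}(P^i,\De(\la)_k)$ is a $B_{\la,k}$-module (the $B_{\la,k}$-action coming from post-composition, which commutes with the $H_{\al,k}$-action), and the differentials are $B_{\la,k}$-linear since they are induced by $H_{\al,k}$-maps. So it suffices to show each $\Hom_{H_{\al,k}}(P^i,\De(\la)_k)$ is a finitely generated $B_{\la,k}$-module: then, since $B_{\la,k}$ is Noetherian (it is a finitely generated commutative algebra over the field $k$ when $k=F,K$ by Lemma~\ref{LDeBField}(i), and is a finitely generated algebra over the Noetherian ring $R$ by Lemma~\ref{LDeB} together with Proposition~\ref{PRTriv}(ii)), the subquotient computing cohomology is again finitely generated.

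For the reduction to $P^i = q^d H_{\al,k}$ (summands only make the module smaller, so it is enough to treat the free case), note that $\Hom_{H_{\al,k}}(q^d H_{\al,k},\De(\la)_k) \cong q^{-d}\De(\la)_k$ as a right $B_{\la,k}$-module, via $f \mapsto f(1_\al)$; here the $B_{\la,k}$-action on the target is precisely the natural right $B_{\la,k}$-module structure on $\De(\la)_k$. By Lemma~\ref{LDeB} (for $k=R$) and Lemma~\ref{LDeBField}(iii) (for $k = F,K$), $\De(\la)_k$ is a finite rank free $B_{\la,k}$-module, hence certainly finitely generated. This establishes the claim for free $P^i$, and the case of a projective summand $P^i \mid \bigoplus_j q^{d_j} H_{\al,k}$ follows because $\Hom_{H_{\al,k}}(P^i,\De(\la)_k)$ is a $B_{\la,k}$-module direct summand of the finitely generated module $\bigoplus_j q^{-d_j}\De(\la)_k$, and a direct summand of a finitely generated module over a Noetherian ring is finitely generated.

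The main point to be careful about is the Noetherianity of $B_{\la,R}$ in the case $k = R$, since $B_{\la,R}$ is not an algebra over a field. Here I would invoke Proposition~\ref{PRTriv}(ii) to identify $B_{\la,R}\otimes_R k \cong B_{\la,k}$ for $k = K, F$, combined with Lemma~\ref{LDeB} (which shows $\De(\la)_R$, hence $B_{\la,R} \subseteq \End_R(\De(\la)_R)$-related data, is well-behaved over $R$); concretely, $B_{\la,R}$ is a finitely generated commutative $R$-algebra (one lifts polynomial generators from $B_{\la,F}$ and $B_{\la,K}$ using freeness), hence Noetherian by the Hilbert Basis Theorem, since $R = \Z_p$ is Noetherian. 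With Noetherianity in hand, the cohomology-of-a-complex-of-finitely-generated-modules argument goes through uniformly for all three rings, and no further obstacle remains.
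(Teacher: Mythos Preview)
Your proposal is correct and follows essentially the same approach as the paper's proof: take a resolution by finitely generated free (or projective) $H_{\al,k}$-modules, apply $\Hom(-,\De(\la)_k)$ to obtain a complex whose terms are finite direct sums of shifts of $\De(\la)_k$ and hence finitely generated over $B_{\la,k}$ by Lemmas~\ref{LDeBField} and~\ref{LDeB}, and conclude by Noetherianity of $B_{\la,k}$. The paper's proof is a terse version of yours; in particular it simply asserts that $B_{\la,k}$ is Noetherian without the extra justification you supply for the case $k=R$.
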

\begin{proof}
Since $H_{\al,k}$ is Noetherian, $V$ has a resolution by finite rank free modules over $H_{\al,k}$. Applying $\Hom_{H_{\al,k}}(-,\De(\la)_k)$ to this resolution, we get a complex with terms being finite direct sums of modules $\simeq \De(\la)_k$, which are finite rank free over $B_{\la,k}$, thanks to Lemmas~\ref{LDeBField} and \ref{LDeB}. As $B_{\la,k}$ is Noetherian, the cohomology groups of the complex are finitely generated $B_{\la,k}$-modules. 
\end{proof}

\begin{Remark} %\label{}%{\rm \cite{}}%{\bf ()}
{\rm 
It is a more subtle issue to determine whether $\Ext^j_{H_{\al,k}}(\De(\la)_k,V)$ is finitely generated as a $B_{\la,k}$-module. We do not know if this is always true.  
}
\end{Remark}

\begin{Lemma} \label{LUEP} %{\rm \cite{}}%
{\bf (Universal Extension Procedure)}
Let $k\in\{F,K,R\}$, $\mu\in\KP(\al)$, and $V_k$ be an indecomposable $H_{\al,k}$-module with a finite $\De$-filtration, all of whose subfactors are of the form $\simeq \De(\la)_k$ for $\la\not\geq \mu$. If $k=R$, assume in addition that $V_R\otimes_R K$ is indecomposable. 
Let   
$$r(q):=\RANK \Ext^1_{H_{\al,k}}(V_k,\De(\mu)_k)\in\Z[q,q^{-1}]$$
be the rank of $\Ext^1_{H_{\al,k}}(V_k,\De(\mu)_k)$ as a $B_{\mu,k}$-module. Then there exists an $H_{\al,k}$-module $E(V_k,\De(\mu)_k)$ with the following properties:
\begin{enumerate}
\item[{\rm (i)}] $E(V_k,\De(\mu)_k)$ is indecomposable;
\item[{\rm (ii)}] $\Ext^1_{H_{\al,k}}(V_k,\De(\mu)_k)=0$;
\item[{\rm (iii)}] there is a short exact sequence\,
$$
0\to \overline{r(q)}\De(\mu)_k\to E(V_k,\De(\mu)_k)\to V_k\to 0.
$$
\end{enumerate}
\end{Lemma}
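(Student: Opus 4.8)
The plan is to construct $E(V_k,\De(\mu)_k)$ as a \emph{universal extension} of $V_k$ by copies of $\De(\mu)_k$, carefully managing the fact that $\Ext^1_{H_{\al,k}}(V_k,\De(\mu)_k)$ is a module over $B_{\mu,k}$ rather than just a vector space. First I would invoke Corollary~\ref{CExtFG} to see that $\Ext^1_{H_{\al,k}}(V_k,\De(\mu)_k)$ is a finitely generated $B_{\mu,k}$-module; choosing homogeneous $B_{\mu,k}$-generators $\xi_1,\dots,\xi_t$ of degrees $d_1,\dots,d_t$ (so that $r(q)=\sum_j q^{d_j}$, using that $B_{\mu,k}$ is a graded polynomial algebra and the $\xi_j$ can be taken to be part of a minimal generating set) I would form the extension
$$
0\to \bigoplus_{j=1}^t q^{d_j}\De(\mu)_k \to E \to V_k\to 0
$$
classified by $(\xi_1,\dots,\xi_t)\in\bigoplus_j\Ext^1_{H_{\al,k}}(V_k,q^{d_j}\De(\mu)_k)$. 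Here $\overline{r(q)}\De(\mu)_k=\bigoplus_j q^{-d_j}\De(\mu)_k$; I would need to match the degree-shift bookkeeping so that the bar-involution on $r(q)$ comes out correctly, which is a matter of tracking whether $\Ext^1(V_k,q^{d}\De(\mu)_k)=q^{-d}\Ext^1(V_k,\De(\mu)_k)$ and how that feeds into the classifying map.

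Next I would verify property (ii), i.e.\ $\Ext^1_{H_{\al,k}}(E,\De(\mu)_k)=0$. Applying $\Hom_{H_{\al,k}}(-,\De(\nu)_k)$ to the short exact sequence for $\nu\neq\mu$ with $\nu\not\geq\mu$, the relevant $\Ext^1$ of $\De(\mu)_k$ into $\De(\nu)_k$ vanishes by Theorem~\ref{TStand}(iii) (since $\mu\not<\nu$ as the filtration subfactors are $\not\geq\mu$, and also handling $\nu=\mu$ separately), so it suffices to treat $\nu=\mu$. The connecting map gives
$$
\Hom_{H_{\al,k}}\Bigl(\bigoplus_j q^{d_j}\De(\mu)_k,\De(\mu)_k\Bigr)\xrightarrow{\ \partial\ }\Ext^1_{H_{\al,k}}(V_k,\De(\mu)_k)\to\Ext^1_{H_{\al,k}}(E,\De(\mu)_k)\to\Ext^1_{H_{\al,k}}\Bigl(\bigoplus_j q^{d_j}\De(\mu)_k,\De(\mu)_k\Bigr).
$$
The last term vanishes by Theorem~\ref{TStand}(iii) again, and $\partial$ sends the standard basis endomorphisms to the $B_{\mu,k}$-generators $\xi_j$; since $\Hom_{H_{\al,k}}(q^{d_j}\De(\mu)_k,\De(\mu)_k)$ is exactly the degree-$d_j$ part of $B_{\mu,k}=\End_{H_{\al,k}}(\De(\mu)_k)^{\op}$ (using $\mu\not<\mu$ so there are no "upper" homomorphisms, only endomorphisms — this is where Theorem~\ref{TInjGeneral} / Theorem~\ref{TStand}(ii) enters), $\partial$ is precisely the $B_{\mu,k}$-module map realizing $\xi_1,\dots,\xi_t$ as generators, hence surjective. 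Therefore $\Ext^1_{H_{\al,k}}(E,\De(\mu)_k)=0$, giving (ii), and the short exact sequence in (iii) is the one we built.

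Finally, for indecomposability (i): when $k$ is a field, $E$ has a finite $\De$-filtration (the subfactors being those of $V_k$ together with copies of $\De(\mu)_k$), so $\End_{H_{\al,k}}(E)$ is finite dimensional; I would argue that any idempotent decomposition of $E$ would, after applying $\Hom_{H_{\al,k}}(-,\De(\mu)_k)$ and $\Hom_{H_{\al,k}}(\De(\mu)_k,-)$ and using the vanishing in (ii) together with minimality of the generating set $\{\xi_j\}$, contradict indecomposability of $V_k$ — more precisely, a nontrivial summand would either split off a piece of $V_k$ or split off a $\De(\mu)_k$ against which the extension class was essential. For $k=R$, I would build $E(V_R,\De(\mu)_R)$ the same way over $R$ (the $\Ext^1$ group is a finitely generated $B_{\mu,R}$-module by Corollary~\ref{CExtFG}, and $B_{\mu,R}$ is graded with $B_{\mu,R}\otimes_R k$ the polynomial algebras, so one can pick homogeneous generators lifting those over $K$) and then deduce indecomposability of $E(V_R,\De(\mu)_R)$ from that of $E(V_R,\De(\mu)_R)\otimes_R K$, which by Lemma~\ref{LExtExt} and the Universal Coefficient Theorem is the corresponding universal extension of $V_R\otimes_R K$ — indecomposable by the field case together with the hypothesis that $V_R\otimes_R K$ is indecomposable and a lemma that an $R$-form whose $K$-form is indecomposable is itself indecomposable.

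The main obstacle I expect is the bookkeeping around the $B_{\mu,k}$-module structure on $\Ext^1$: making sure that "universal extension" really means pushing out along a \emph{minimal homogeneous generating set} of the $\Ext^1$ as a $B_{\mu,k}$-module (not a $k$-basis, which would be infinite), that the connecting homomorphism $\partial$ is exactly the evaluation-at-generators map so its surjectivity is automatic, and that the degree shifts assemble into $\overline{r(q)}$ rather than $r(q)$. The indecomposability argument in the field case, and its transfer to $R$, is the second delicate point, since one must rule out summands using only the $\Ext^1$-vanishing and the structure of $B_{\mu,k}$.
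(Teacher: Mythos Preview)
Your approach is essentially the paper's, with one cosmetic difference: you build the universal extension in one shot by the class $(\xi_1,\dots,\xi_t)$, whereas the paper adds the generators one at a time; the resulting module is the same, and your description of the connecting map $\partial$ as ``evaluation at generators'' is exactly how the paper verifies (ii). Your degree-shift worry is correctly flagged: a degree $d_j$ class lives in $\ext^1(V_k,q^{-d_j}\De(\mu)_k)$, so the kernel is $\bigoplus_j q^{-d_j}\De(\mu)_k=\overline{r(q)}\De(\mu)_k$, not $\bigoplus_j q^{d_j}\De(\mu)_k$.

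Two points in your outline are genuinely incomplete. First, for indecomposability over a field, you need a structural input you do not name: any direct summand of a $\De$-filtered module again has a finite $\De$-filtration (this is \cite[Corollary~7.10]{Kdonkin}). With that in hand, the paper's argument is: write $E=E'\oplus E''$, push the $\De(\mu)$'s to the bottom of each summand (possible since $\Ext^1(\De(\mu),\De(\la))=0$ for $\la\not>\mu$), then use $\Hom(\De(\mu),V)=0$ to conclude $V\cong E'/M'\oplus E''/M''$, contradicting indecomposability of $V$ unless, say, $E'=M'$; but then some projection $\pi_s$ lifts to $E$, forcing $\xi_s\in\ker\partial$, against minimality. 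Your sketch gestures at this but does not isolate the $\De$-filtration inheritance, without which ``a nontrivial summand would split off a piece of $V_k$'' is not justified.

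Second, for $k=R$, the key input you omit is Corollary~\ref{CTorsFreeGen}: torsion-freeness of $\Ext^1_{H_{\al,R}}(V_R,\De(\mu)_R)$. This is what guarantees that your minimal $B_{\mu,R}$-generators $\xi_{j,R}$ remain a minimal generating set after $\otimes_R K$, so that (via Lemma~\ref{LExtExt}) $E_R\otimes_R K$ is the universal extension $E_K$ built from $V_R\otimes_R K$ and hence indecomposable by the field case. Without torsion-freeness, $\Ext^1$ could shrink over $K$ and $E_R\otimes_R K$ could acquire a split $\De(\mu)_K$ summand. You invoke the Universal Coefficient Theorem but not this torsion-freeness corollary; it is the load-bearing step.
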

\begin{proof}
In this proof we drop $H_{\al,k}$ from the indices and write $\Ext^1$ for $\Ext^1_{H_{\al,k}}$, etc. Also, when this does not cause a confusion, we drop $k$ from the indices. 
Let $\xi_1,\dots,\xi_r$ be a minimal set of homogeneous generators of $\Ext^1(V,\De(\mu))$ as a $B_{\mu}$-module, and $d_s:=\deg(\xi_s)$ for $s=1,\dots,r$, so that $r(q)=\sum_sq^{d_s}$. 
The extension 
$
0\to q^{-d_1}\De(\mu)\to E_1\to V\to 0, 
$ 
 corresponding to $\xi_1$, yields the long exact sequence
\begin{align*} \HOM(q^{-d_1}\De(\mu),\De(\mu))
\stackrel{\phi}{\longrightarrow} \Ext^1(V,\De(\mu))
%\to
\stackrel{\psi}{\longrightarrow}
\Ext^1(E_1,\De(\mu))\to 0.
\end{align*}
We have used that $\EXT^1(q^{-d_1}\De(\mu),\De(\mu))=0$, see Proposition~\ref{PRTriv}(iii). 
Note that $q^{-d_1}\De(\mu)=\De(\mu)$ as $H_\al$-modules but with degrees shifted down by $d_1$. So we can consider the identity map $\id: q^{-d_1}\De(\mu)\to \De(\mu)$, which has degree $d_1$. 
The connecting homomorphism $\phi$ maps this  identity map to $\xi_1$. It follows that $\Ext^1(E_1,\De(\mu))$ is generated as a $B_{\mu}$-module by the elements $\bar\xi_2:=\psi(\xi_2),\dots,\bar\xi_r:=\psi(\xi_r)$. 
%In fact, we have an isomorphism of $B_{\mu}$-modules $\Ext^1(E_1,\De(\mu))\simeq \Ext^1(V,\De(\mu))/(\xi_1 \cdot B_{\mu}).$ 
%Finally, $E_1$ is indecomposable  since $\phi\neq 0$, and so $\chi$ is not onto. 
Repeating the argument $r-1$ more times, %(and using $\EXT^1(\De(\mu),\De(\mu))=0$), 
we get an extension 
$$
0\to q^{-d_1} \De(\mu)\oplus\dots\oplus q^{-d_r} \De(\mu)=\overline{r(q)}\De(\mu)
\to E\to V\to 0
$$ 
such that in the corresponding long exact sequence 
\begin{align*}
&\HOM(E,\De(\mu)) \stackrel{\chi}{\longrightarrow} \HOM(\overline{r(q)}\De(\mu),\De(\mu))
\\
\stackrel{\phi}{\longrightarrow} &\Ext^1(V,\De(\mu))
%\to
\to
\Ext^1(E,\De(\mu))\to 0,
\end{align*}
for all $s=1,\dots,r$, we have $\phi(\pi_s)=\xi_s$, where $\pi_s$ is the (degree $d_s$) projection onto the $s$th summand. In particular, $\phi$ is surjective, 
and $\Ext^1(E,\De(\mu))=0.$ 

It remains to prove that $E$ is indecomposable. We first prove this when $k$ is a field. In that case, if $E=E'\oplus E''$, then both $E'$ and $E''$ have finite $\De$-filtrations, see \cite[Corollary 7.10]{Kdonkin}. Since $\Ext^1(\De(\mu),\De(\la))=0$ for $\la\not>\mu$, there is a partition $J'\sqcup J''=\{1,\dots,r\}$ such that there are submodules  $M'\cong\oplus_{j\in J'}q^{d_j}\De(\mu)\subseteq E'$, $M''\cong\oplus_{j\in J''}q^{d_j}\De(\mu)\subseteq E''$  and  $E'/M'$, $M''/E''$ have $\De$-filtrations. Since $\Hom(\De(\mu),V)=0$, we now deduce that $V\cong E'/M'\oplus E''/M''$. As $V$ is indecomposable, we may assume that $E'/M'=0$. Then some projection $\pi_s$ lifts to a homomorphism $E\to \De(\mu)$, which shows that this $\pi_s$ is in the image of $\chi$, and hence in the kernel of $\phi$, which is a contradiction. 

Now let $k=R$. Note that $V$ and $E$ are free as $R$-modules since so are all $\De(\nu)_R$'s. If $E_R$ is decomposable, then so is $E_R\otimes K$, so it suffices to prove that $E_R\otimes K$ is indecomposable. In view of Corollary~\ref{CTorsFreeGen}, the $B_{\mu,K}$-module $\Ext^1(V_R,\De(\mu)_R)\otimes_R K \cong \Ext^1(V_R\otimes_R K,\De(\mu)_K)$ is minimally generated by $\xi_{1,R}\otimes 1_K,\dots,\xi_{r,R}\otimes 1_K$. It follows, using Lemma~\ref{LExtExt},  that $E_R\otimes_R K\cong E_K$, where $E_K$ is constructed using the universal extension procedure starting with the indecomposable module $V_K:=V_R\otimes_R K$ as in the first part of the proof of the lemma. By the field case established in the previous paragraph, $E_K$ is indecomposable. 
\end{proof}

Let $\la\in\KP(\al)$. For  $k\in\{R,K,F\}$, we construct a module $Q(\la)_k$ starting  with $\De(\la)_k$, and  repeatedly applying the universal extension procedure. To simplify notation we drop some of the indices $k$ if this does not lead to a confusion. Given Laurent polynomials $r_0(q),r_1(q),\dots,r_m(q)\in\Z[q,q^{-1}]$ with non-negative coefficients and Kostant partitions $\la^0,\la^1,\dots,\la^m\in\KP(\al)$, we will use the notation 
$$V=r_0(q)\De(\la^0)\mid r_1(q)\De(\la^1)\mid \dots\mid r_m(q)\De(\la^m)
$$
to indicate that the $H_\al$-module $V$ has a filtration $V=V_0\supseteq V_1\supseteq \dots\supseteq V_{m+1}=(0)$ such that $V_s/V_{s+1}\cong r_s(q)\De(\la^s)$ for $s=0,1\dots,m$. 

If 
$\Ext^1_{H_{\al}}(\De(\la),\De(\mu))=0$ 
for all $\mu\in\KP(\al)$, we  set $Q(\la)_k:=\De(\la)_k$.  
Otherwise, let $\la^{1,k}\in\KP(\al)$ be minimal with 
$\Ext^1_{H_{\al}}(\De(\la),\De(\la^{1,k}))\neq0.$  
Note that this $\la^{1,k}$ might indeed depend on the ground ring  $k$, hence the notation. Also notice $\la^{1,k}>\la$. Let $E(\la,\la^{1,k})_k:=E(\De(\la),\De(\la^{1,k}))$, see Lemma~\ref{LUEP}. By construction $E(\la,\la^{1,k})_k=\De(\la)\mid  \overline{r_{1,k}(q)}\De(\la^{1,k})$, where 
$$r_{1,k}(q)=\RANK \Ext^1_{H_{\al}}(\De(\la),\De(\la^{1,k}))$$ as a $B_{\la^{1,k}}$-module. This rank might depend on $k$, hence the notation. If 
$$\Ext^1_{H_{\al}}(E(\la,\la^{1,k}),\De(\mu))=0$$ 
for all $\mu\in\KP(\al)$, we set $Q(\la)_k:=E(\la,\la^{1,k})_k$.  
Otherwise, let $\la^{2,k}\in\KP(\al)$ be minimal with 
$\Ext^1_{H_{\al}}(E(\la,\la^{1,k}),\De(\la^{2,k}))\neq0.
$ 
Note that $\la^{2,k}>\la$ and $\la^{2,k}\neq \la^{1,k}$. 
Let $E(\la,\la^{1,k},\la^{2,k})_k:=E(E(\la,\la^{1,k}),\De(\la^{2,k}))$. By construction 
$$E(\la,\la^{1,k},\la^{2,k})_k=\De(\la)\mid \overline{r_{1,k}(q)}\De(\la^{1,k})\mid  \overline{r_{2,k}(q)}\De(\la^{2,k}),
$$ 
where 
$$r_{2,k}(q)=\RANK \Ext^1_{H_{\al}}(E(\la,\la^{1,k}),\De(\la^{2,k}))$$ as a $B_{\la^{2,k}}$-module. 
If 
$\Ext^1_{H_{\al}}(E(\la,\la^{1,k},\la^{2,k}),\De(\mu))=0$  
for all $\mu\in\KP(\al)$, we set $Q(\la)_k:=E(\la,\la^{1,k},\la^{2,k})$.  

Since on each step we will have to pick $\la^{t,k}>\la$, which does not belong to $\{\la,\la^{1,k},\dots,\la^{t-1,k}\}$, the process will stop after finitely many steps, and we will obtain a module $$E(\la,\la^{1,k},\dots,\la^{m_k,k})_k=\De(\la)\mid \overline{r_{1,k}(q)}\De(\la^{1,k})\mid  \dots\mid  \overline{r_{m_k,k}(q)}\De(\la^{m_k,k}),$$ where 
\begin{equation}\label{ERankUn}
r_{t,k}(q)=\RANK \Ext^1_{H_{\al,k}}(E(\la,\la^{1,k},\dots,\la^{t-1,k})_k,\De(\la^{t,k})_k) 
\end{equation}
 as a $B_{\la^{t,k},k}$-module for all $1\leq t\leq m_k$, and such that 
$$\Ext^1_{H_{\al,k}}(E(\la,\la^{1,k},\dots,\la^{m_k,k})_k,\De(\mu)_k)=0$$ 
for all $\mu\in\KP(\al)$. We set $Q(\la)_k:=E(\la,\la^{1,k},\dots,\la^{m_k,k})_k$. %We point out again that the Kostant partitions $\la^t$, the number of steps $m$, and the ranks $r_{t,k}(q)$ might all depend on $k$. 

\begin{Theorem} \label{TQ}%{\rm \cite{}}%{\bf ()}
Let $\al\in Q^+$ and $\la\in \KP(\al)$. 
\begin{enumerate}
\item[{\rm (i)}] For $k=K$ or $F$, we have $Q(\la)_k\cong P(\la)_k$. 
\item[{\rm (ii)}] For $k=K$ or $F$, the rank $r_{t,k}(q)$ from (\ref{ERankUn}) equals the decomposition number $d_{\la^{t,k},\la}^k$ for all $1\leq t\leq m_k$, and $d_{\mu,\la}^k=0$ for  $\mu\not\in \{\la^{t,k}\mid 1\leq t\leq m_k\}$. 

\item[{\rm (iii)}] $m_R=m_K$; setting $m:=m_R$, we may choose $\la^{1,R}=\la^{1,K},\dots,\la^{m,R}=\la^{m,K}$ and then  $r_{t,R}(q)=r_{t,K}(q)$ for all $1\leq t\leq m$. 

\item[{\rm (iv)}] $Q(\la)_R\otimes_R K\cong P(\la)_K$. 

\end{enumerate}
\end{Theorem}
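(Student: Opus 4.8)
The plan is to recognize $Q(\la)_k$ as $P(\la)_k$ for $k$ a field via the universal property of the Universal Extension Procedure, to read off (ii) from the explicit $\De$-filtration, to synchronize the two constructions over $R$ and $K$ for (iii), and to obtain (iv) as a consequence.

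\emph{Parts (i) and (ii).} For $k=K$ or $F$, the module $Q(\la)_k=E(\la,\la^{1,k},\dots,\la^{m_k,k})_k$ is indecomposable by iterating Lemma~\ref{LUEP}(i), it has a finite $\De$-filtration with top factor $\De(\la)_k$ by construction, and $\Ext^1_{H_{\al,k}}(Q(\la)_k,\De(\mu)_k)=0$ for all $\mu\in\KP(\al)$ because the procedure only stops when this holds; so Lemma~\ref{LPExt} gives $Q(\la)_k\cong P(\la)_k$. For (ii), the $\De$-filtration of $Q(\la)_k$ has $\De(\la^{t,k})_k$ occurring with graded multiplicity $\overline{r_{t,k}(q)}$ and no other $\De(\mu)_k$; since by Theorem~\ref{TStand}(iv),(v) the $\De$-filtration multiplicities of $P(\la)_k$ are the decomposition numbers $d^k_{\mu,\la}(q)$ and are independent of the chosen filtration, this identifies $\{\la^{1,k},\dots,\la^{m_k,k}\}$ with $\{\mu\ne\la:d^k_{\mu,\la}\ne0\}$ and gives $r_{t,k}(q)=d^k_{\la^{t,k},\la}(q)$ (bar-invariance of the relevant formal characters removing the bar).

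\emph{Part (iii).} Run the constructions over $R$ and over $K$ in parallel and argue by induction on the number of completed stages that the choices can be taken to agree: after $t-1$ stages the labels $\la^1,\dots,\la^{t-1}$ and the ranks $r_1(q),\dots,r_{t-1}(q)$ are common, and $E_R:=E(\la,\la^1,\dots,\la^{t-1})_R$ satisfies $E_R\otimes_R K\cong E_K:=E(\la,\la^1,\dots,\la^{t-1})_K$ — this last being exactly what the $k=R$ case of Lemma~\ref{LUEP} produces, via Lemma~\ref{LExtExt}. For the inductive step, the Universal Coefficient Theorem (with $K$ flat over $R$) gives $\Ext^1_{H_{\al,R}}(E_R,\De(\mu)_R)\otimes_R K\cong\Ext^1_{H_{\al,K}}(E_K,\De(\mu)_K)$ for every $\mu$. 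One checks, using Lemma~\ref{LUEP}(ii) and the $\Ext^2$-vanishing in Theorem~\ref{TStand}(iii), that $\Ext^1_{H_{\al,R}}(E_R,\De(\nu)_R)=0$ for $\nu\in\{\la,\la^1,\dots,\la^{t-1}\}$, so each candidate $\mu$ for $\la^t$ lies outside this set; Corollary~\ref{CTorsFreeGen} then shows $\Ext^1_{H_{\al,R}}(E_R,\De(\mu)_R)$ is $R$-torsion-free, and Corollary~\ref{CExtFG} that it is finitely generated over $B_{\mu,R}$, hence a finite-rank free $R$-module in each degree. Therefore $\Ext^1_{H_{\al,R}}(E_R,\De(\mu)_R)\ne0$ exactly when $\Ext^1_{H_{\al,K}}(E_K,\De(\mu)_K)\ne0$, so the minimal such $\mu$ is common; call it $\la^t$. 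One further shows $\Ext^1_{H_{\al,R}}(E_R,\De(\la^t)_R)$ is free over $B_{\la^t,R}$ (using the $\De$-filtration of $E_R$, the fact that $\De(\la^t)_R$ is finite-rank free over $B_{\la^t,R}$ by Lemma~\ref{LDeB}, and the vanishing of $\Ext^1(\De(\nu),\De(\la^t))$ unless $\nu<\la^t$); then $B_{\la^t,R}\otimes_R K\cong B_{\la^t,K}$ (Proposition~\ref{PRTriv}(ii)) forces $r_{t,R}(q)=r_{t,K}(q)$, and Lemma~\ref{LExtExt} gives $E(\la,\la^1,\dots,\la^t)_R\otimes_R K\cong E(\la,\la^1,\dots,\la^t)_K$, closing the induction. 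The procedures terminate under an equivalent condition on both sides, so $m_R=m_K$ and all labels and ranks agree.

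\emph{Part (iv) and the main obstacle.} Part (iv) is now immediate: $Q(\la)_R\otimes_R K\cong Q(\la)_K\cong P(\la)_K$ by (iii) and (i); alternatively one sees directly that $Q(\la)_R\otimes_R K$ is indecomposable (by the $k=R$ case of Lemma~\ref{LUEP}), has a $\De$-filtration with top $\De(\la)_K$ by base change, and has $\Ext^1_{H_{\al,K}}(Q(\la)_R\otimes_R K,\De(\mu)_K)\cong\Ext^1_{H_{\al,R}}(Q(\la)_R,\De(\mu)_R)\otimes_R K=0$, whence Lemma~\ref{LPExt} applies. The real difficulty is entirely in (iii): keeping the two runs of the universal extension procedure synchronized. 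The delicate inputs are the $R$-torsion-freeness of the relevant $\Ext^1$-groups (resting on Theorem~A via Theorem~\ref{TTorsFree} and Corollary~\ref{CTorsFreeGen}, together with finite generation over $B_{\mu,R}$ from Corollary~\ref{CExtFG}); the verification that no previously chosen label re-enters as a candidate; and — the point I expect to need the most work — the freeness of $\Ext^1_{H_{\al,R}}(E_R,\De(\la^t)_R)$ as a $B_{\la^t,R}$-module, since it is precisely this that prevents the minimal-generator counts, i.e.\ the rank polynomials $r_t(q)$, from growing when one does not invert $p$.
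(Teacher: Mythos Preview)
Your argument tracks the paper's proof closely: part (i) via Lemma~\ref{LPExt}, part (ii) via Theorem~\ref{TStand}(v), and parts (iii)--(iv) by the same induction on the number of completed stages, using Corollary~\ref{CTorsFreeGen}, the Universal Coefficient Theorem, and Lemma~\ref{LExtExt} exactly as the paper does.

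The one substantive divergence is at the point you flag as needing ``the most work.'' You propose to show that $\Ext^1_{H_{\al,R}}(E_R,\De(\la^t)_R)$ is \emph{free} as a $B_{\la^t,R}$-module, but the hints you give do not establish this: the $\De$-filtration of $E_R$ and Lemma~\ref{LDeB} only tell you that $\Hom_{H_{\al,R}}(P,\De(\la^t)_R)$ is $B_{\la^t,R}$-free for projective $P$, so that the relevant $\Ext$-groups are cohomology of a complex of free $B_{\la^t,R}$-modules---and such cohomology need not be free, nor do long exact sequences in $\Ext$ preserve freeness. The paper never makes this stronger claim. Instead, both in the $k=R$ case of Lemma~\ref{LUEP} and again in the proof of (iii), it simply asserts that the $R$-torsion-freeness provided by Corollary~\ref{CTorsFreeGen} is enough to conclude that a minimal set of homogeneous $B_{\la^t,R}$-generators $\xi_{1,R},\dots,\xi_{r,R}$ of $\Ext^1_{H_{\al,R}}(E_R,\De(\la^t)_R)$ remains minimal after tensoring with $K$, whence $r_{t,R}(q)=r_{t,K}(q)$ and (via Lemma~\ref{LExtExt}) $E_R\otimes_R K\cong E_K$. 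So your overall strategy and inputs are precisely the paper's; you should drop the $B$-freeness detour and appeal directly to the argument already packaged inside Lemma~\ref{LUEP}.
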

\begin{proof}
Part (i) follows from the construction and Lemma~\ref{LPExt}. Part (ii) follows from part (i), the construction, and Theorem~\ref{TStand}(v). 

To show (iii) and (iv), we prove by induction on $t=0,1,\dots$ that we can choose $\la^{t,R}=\la^{t,K}$, $r_{t,R}(q)=r_{t,K}(q)$ and 
\begin{equation}\label{EEExt}
E(\la,\la^{1,R},\dots,\la^{t,R})_R\otimes_R K\cong E(\la,\la^{1,K},\dots,\la^{t,K})_K.
\end{equation}
The induction base is simply the statement $\De(\la)_R\otimes_RK\cong \De(\la)_K$. 
For the induction step, assume that $t>0$ and the claim has been proved for all $s<t$. 

Let $\xi_{1,R},\dots,\xi_{r,R}$ be a minimal set of generators of the $B_{\la^{t,R},R}$-module 
$$\Ext^1_{H_{\al,R}}(E(\la,\la^{1,R},\dots,\la^{t-1,R})_R,\De(\la^{t,R})_R),$$ 
so that $r_{t,R}(q)=\deg(\xi_{1,R})+\dots+\deg(\xi_{r,R})$ is the rank of that module. 
Using Corollary~\ref{CTorsFreeGen} and the Universal Coefficient Theorem, we deduce that $\la^{t,K}$ can be chosen to be $\la^{t,R}$ and the $B_{\la^{t,R},K}$-module 
$$\Ext^1(\De(\la)_R,\De(\la^{t,R})_R)\otimes_R K \cong \Ext^1(V_R\otimes_R K,\De(\la^{t,R})_K)$$ 
is minimally generated by $\xi_{1,R}\otimes 1_K,\dots,\xi_{r,R}\otimes 1_K$, so that $r_{t,K}(q)=r_{t,R}(q)$. Finally (\ref{EEExt}) comes from Lemma~\ref{LExtExt}. 
\end{proof}

In view of Theorem~\ref{TQ}(i), $Q(\la)_R$ in general is not an $R$-form of $Q(\la)_F\cong P(\la)_F$. 
For every $\la\in\KP(\al)$, define the $H_{\al,F}$-module $X(\la):=Q(\la)_R\otimes F$.

\begin{Theorem} %\label{}%{\rm \cite{}}%{\bf ()}
James' Conjecture has positive solution for $\al$ if and only if one of the following equivalent conditions holds:
\begin{enumerate}
\item[{\rm (i)}] $X(\la)$ is projective;
\item[{\rm (ii)}] $X(\la)\cong P(\la)_F$ for all $\la\in\KP(\al)$;
\item[{\rm (iii)}] $\Ext^1_{H_{\al,F}}(X(\la),\De(\mu)_F)=0$ for all $\la,\mu\in\KP(\al)$;
\item[{\rm (iv)}] $\Ext^2_{H_{\al,R}}(Q(\la)_R,\De(\mu)_R)$ is torsion-free for all $\la,\mu\in\KP(\al)$. 
\end{enumerate}
\end{Theorem}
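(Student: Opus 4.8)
The plan is to establish the equivalence of (i)--(iv) first, and then to prove separately that James' Conjecture has positive solution for $\al$ if and only if (ii) holds. Throughout I use the basic observation that, since each $\De(\mu)_R$ is $R$-free, base change along $R\to F$ turns the finite $\De$-filtration of $Q(\la)_R$ built in the construction preceding Theorem~\ref{TQ} into a finite $\De$-filtration of $X(\la)=Q(\la)_R\otimes_R F$ with $(X(\la):\De(\mu)_F)_q=(Q(\la)_R:\De(\mu)_R)_q$ for all $\mu$; by Theorem~\ref{TQ}(ii),(iii) these multiplicities equal the characteristic-zero decomposition numbers $d^K_{\mu,\la}$, with $\De(\la)$ occurring once and all other factors indexed by $\mu>\la$.

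The implication (ii)$\Rightarrow$(i) is trivial. For (i)$\Leftrightarrow$(iii) I invoke the usual criterion in the affine quasi-hereditary setting: a module with a $\De$-filtration is projective iff its $\Ext^1$ into every $\De(\mu)_F$ vanishes --- one writes $0\to\Omega\to P\to X(\la)\to0$ with $P$ a finite direct sum of projective covers, notes $\Omega$ has a $\De$-filtration by \cite[Corollary~7.10]{Kdonkin}, concludes $\Ext^1_{H_{\al,F}}(X(\la),\Omega)=0$ by d\'evissage along that filtration, and splits the sequence. For (iii)$\Leftrightarrow$(iv): by construction $\Ext^1_{H_{\al,R}}(Q(\la)_R,\De(\mu)_R)=0$ for all $\mu$, so the Universal Coefficient Theorem yields $\Ext^1_{H_{\al,F}}(X(\la),\De(\mu)_F)\cong\Tor^R_1\big(\Ext^2_{H_{\al,R}}(Q(\la)_R,\De(\mu)_R),F\big)$, and since $\Ext^2_{H_{\al,R}}(Q(\la)_R,\De(\mu)_R)$ is finitely generated over $R$ in each degree, this $\Tor$ term vanishes precisely when that $\Ext^2$ is torsion-free. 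Finally (i)$\Rightarrow$(ii): if $X(\la)$ is projective, decomposing it into indecomposable projectives and comparing $\De$-multiplicities shows that exactly one summand is $P(\la)_F$, with the complement $Y$ carrying a $\De$-filtration with $(Y:\De(\mu)_F)_q=d^K_{\mu,\la}-d^F_{\mu,\la}$; but the relation $D^F=D^K A$ with $A$ lower-unitriangular and having non-negative entries (the Remark after Lemma~\ref{LAdj}) forces $d^F_{\mu,\la}\ge d^K_{\mu,\la}$ coefficientwise, so these multiplicities are non-positive, hence zero, and $Y=0$.

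It remains to connect (ii) with James' Conjecture. If $X(\la)\cong P(\la)_F$ for all $\la$, then $d^K_{\mu,\la}=(X(\la):\De(\mu)_F)_q=(P(\la)_F:\De(\mu)_F)_q=d^F_{\mu,\la}$ by Theorem~\ref{TStand}(v), so $D^K=D^F$, the adjustment matrix is the identity, and James' Conjecture has positive solution by the Remark after Lemma~\ref{LAdj}. Conversely, assume the adjustment matrix is the identity. By Lemma~\ref{LAdj} this gives $P(\la)_R\otimes_R K\cong P(\la)_K$, so the idempotent-lifted projective $P(\la)_R$ is simultaneously an $R$-form of $P(\la)_K$ and of $P(\la)_F$; moreover $P(\la)_R$ has a $\De$-filtration over $R$ (a routine integral statement, proved as in Lemma~\ref{LDeB} by transporting the filtration bases from $K$ and $F$ to $R$). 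Using projectivity of $P(\la)_R$ and Theorem~\ref{TStand}(iv), choose a surjection $\eta\colon P(\la)_R\twoheadrightarrow\De(\la)_R$ and lift it through the top quotient $\pi\colon Q(\la)_R\twoheadrightarrow\De(\la)_R$ of the $\De$-filtration to a degree-zero map $\phi\colon P(\la)_R\to Q(\la)_R$. Tensoring with $K$: $\ker(\pi\otimes K)$ is a proper submodule of $P(\la)_K$ and hence lies in $\rad P(\la)_K$; were $\phi\otimes K$ in the radical of the local ring $\End_{H_{\al,K}}(P(\la)_K)$, its image would lie in $\rad P(\la)_K$, so $(\pi\otimes K)\circ(\phi\otimes K)$ would land in $\rad\De(\la)_K$, contradicting surjectivity of $\eta\otimes K$. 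Hence $\phi\otimes K$ is an isomorphism, so $\phi$ is injective ($\ker\phi$ is $R$-torsion inside the $R$-free $P(\la)_R$) with $R$-torsion cokernel. But $\phi(P(\la)_R)\cong P(\la)_R$ is a $\De$-filtered submodule of the $\De$-filtered module $Q(\la)_R$, so by \cite[Corollary~7.10]{Kdonkin} the cokernel $\operatorname{coker}\phi$ has a $\De$-filtration and is in particular $R$-free; being also $R$-torsion it vanishes. Therefore $\phi$ is an isomorphism, $Q(\la)_R\cong P(\la)_R$, and $X(\la)=Q(\la)_R\otimes_R F\cong P(\la)_R\otimes_R F\cong P(\la)_F$, establishing (ii).

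The main obstacle is this last direction (James $\Rightarrow$ (ii)): the key point is that the identity-adjustment-matrix hypothesis forces the universally-constructed integral form $Q(\la)_R$ to be isomorphic to the idempotent-lifted form $P(\la)_R$, and extracting this requires careful control of $R$-forms --- one must know that $P(\la)_R$ carries a standard filtration over $R$, that such filtrations and the relevant $\Hom$-spaces behave well under the base changes $R\to K$ and $R\to F$, and one must rule out a spurious $R$-torsion cokernel for $\phi$. By contrast the equivalences among (i)--(iv) and the implication (ii)$\Rightarrow$James are either formal consequences of the Universal Coefficient Theorem and d\'evissage along $\De$-filtrations, or a direct comparison of decomposition numbers.
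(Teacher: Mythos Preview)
Your treatment of the equivalences (i)--(iv) and of (ii)$\Rightarrow$James is correct and follows the paper's line (your (i)$\Rightarrow$(ii) argument via the adjustment-matrix inequality $d^F_{\mu,\la}\geq d^K_{\mu,\la}$ is in fact more explicit than the paper's terse appeal to ``formal characters and Lemma~\ref{LAdj}'').

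Your argument for James$\Rightarrow$(ii), however, has a genuine gap. The step ``$\phi(P(\la)_R)$ is a $\De$-filtered submodule of the $\De$-filtered module $Q(\la)_R$, so by \cite[Corollary~7.10]{Kdonkin} the cokernel has a $\De$-filtration'' invokes a statement that is simply false. The cited result is a \emph{kernel} statement: in a short exact sequence $0\to A\to B\to C\to 0$ with $B$ and $C$ both $\De$-filtered, $A$ is $\De$-filtered. The cokernel version fails already over a field: by Theorem~\ref{TDelta}(iv) the inclusion $q_\al^2\De(\al)\hookrightarrow\De(\al)$ is a map of $\De$-filtered modules with cokernel $L(\al)$, which is not $\De$-filtered. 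So you cannot conclude that $\operatorname{coker}\phi$ is $R$-free, and the argument collapses. (There are secondary issues as well: the claim that $P(\la)_R$ carries a $\De$-filtration over $R$ is not established in the paper and is not ``routine'' in the sense of Lemma~\ref{LDeB}; and \cite[Corollary~7.10]{Kdonkin} is a field statement, not an $R$-statement.)

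The paper's proof of James$\Rightarrow$(ii) avoids comparing $P(\la)_R$ with $Q(\la)_R$ altogether. Instead it re-runs the universal extension procedure over $F$ and checks, stage by stage, that it matches the one over $R$ upon base change. Concretely: James' Conjecture gives $d^K_{\mu,\la}=d^F_{\mu,\la}$, so by Theorem~\ref{TQ}(ii),(iii) the $B_\mu$-ranks satisfy $r_{t,R}(q)=r_{t,K}(q)=r_{t,F}(q)$ at each step; then Corollary~\ref{CTorsFreeGen} together with Lemma~\ref{LExtExt} (used exactly as in the proof of Theorem~\ref{TQ}(iv), but with $F$ in place of $K$) shows that $E(\la,\la^{1},\dots,\la^{t})_R\otimes_R F$ is the $t$-th universal extension over $F$, whence $Q(\la)_R\otimes_R F\cong Q(\la)_F\cong P(\la)_F$.
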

\begin{proof}
(i) and (ii) are equivalent by an argument involving formal characters and Lemma~\ref{LAdj}. 
Furthermore, (i) and (iii) are equivalent by Lemma~\ref{LPExt}. Since since $\Ext^1_{H_{\al,R}}(Q(\la)_R,\De(\mu)_R)=0$ for all $\mu$, (iii) is equivalent to (vi) by the Universal Coefficient Theorem. Finally,  we prove that (ii) is equivalent to James' Conjecture having   positive solution. If $X(\la)\cong P(\la)_F$ for all $\la$, then they have the same graded dimension, so the $R$-modules $Q(\la)_R$ and $P(\la)_R$ have  the same graded $R$-rank, whence the $K$-modules $P(\la)_K\cong Q(\la)_R\otimes_R K$ and $P(\la)_R\otimes_R K$ have the same graded dimension, therefore $P(\la)_R\otimes_R K\cong P(\la)_K$ for all $\la$, see Lemma~\ref{LAdj}, whence James' Conjecture has  positive solution. 

Conversely, assume James' Conjecture has positive solution. This means that $d_{\mu,\la}^K=d_{\mu,\la}^F$ for all $\mu,\la\in\KP(\al)$. By Theorem~\ref{TQ}(ii), on every step of our universal extension process, we are going to have the same rank of the $\Ext^1$-group over $K$ and $F$, so, by Theorem~\ref{TQ}(iii), on every step of our universal extension process, we are also going to have the same rank of the appropriate $\Ext^1$-groups over $R$ and $F$. Now, use Lemma~\ref{LExtExt} as in the proof of Theorem~\ref{TQ}(iv) to show that $Q(\la)_R\otimes_R F\cong P(\la)_F$. 
\end{proof}

\begin{Remark} %\label{}%{\rm \cite{}}%{\bf ()}
{\rm 
We conjecture that $P(\la)_F$ has an $X$-filtration with the top quotient $X(\la)$ and $X(\mu)$ appearing $a_{\mu,\la}(q)$ times. On the level of Grothendieck groups, this is true thanks to Lemma~\ref{LAdj}. But it seems not so obvious even that $X(\la)$ is a quotient of $P(\la)_F$. 
}
\end{Remark}

%%%%%

\end{document}